\setlist[enumerate,1]{label=(\roman*)}
\let\twoheadrightarrow\rrightarrow
\newcommand{\mapsmto}{\mathrel{\mathrlap{\mapsto}\mkern1mu\rightarrow}}
\DeclareMathOperator*{\dive}{div}
\DeclareMathOperator*{\esssup}{ess\,sup}
\newcommand{\ext}{\operatorname{ext}}
\newcommand{\diam}{\operatorname{diam}}
\newcommand{\clconv}{\overline{\operatorname{conv}}}
\title{Stability and genericity of bang-bang controls in affine problems\thanks{
		\funding{Alberto is supported by the Alexander von Humboldt
			Foundation with a research fellowship. This work was partially supported by the Austrian Science Fund by means of the grant FWF I4571-N.}}}
\author{Alberto Dom\'inguez Corella\thanks{Friedrich-Alexander-Universit\"{a}t Erlangen-N\"{u}rnberg, Department of Mathematics, Chair for Dynamics, Control, Machine Learning and Numerics – Alexander von Humboldt Professorship, 91058 Erlangen, Germany; \tt \email{alberto.corella@fau.de, alberto.of.sonora@gmail.com}}
	\and Gerd Wachsmuth\thanks{Brandenburgische Technische Universit\"{a}t Cottbus–Senftenberg, Institute of Mathematics, 03046 Cottbus, Germany; \tt \email{wachsmuth@b-tu.de}}}
\begin{document}
	
\maketitle

\begin{abstract}
We analyse the role of the bang-bang property in affine optimal control problems.
We show that many essential stability properties of affine problems are only satisfied when  minimizers are bang-bang.
Moreover, we prove that almost any perturbation in an affine optimal control problem leads to a  bang-bang  strict global minimizer.
We work in an abstract framework that allows to cover many problems in the literature of optimal control, this includes problems constrained by partial and ordinary differential equations.
We give examples that show the applicability of our results to specific optimal control problems. 
\end{abstract}

\begin{keywords}
	bang-bang, affine optimal control, stability, genericity
\end{keywords}

\begin{MSCcodes}
49J30, 65K10, 49K40
\end{MSCcodes}

\section{Introduction}
The term \textit{bang-bang} was coined a long time ago in control theory.  The term is informal and has become widely adopted in the field to refer to   a control  that switches from one extreme value to another; in analogy with relays, that make a \textit{bang}  to change from \textit{off} to \textit{on}, and another \textit{bang} to come back from \textit{on} to \textit{off}. The term also  applies  to controls that take  several values (more than two), usually the vertices of some polygon or polyhedron.

One of the fundamental principles in the theory of mathematical control is the so-called \textit{bang-bang principle}, this result asserts that for control systems, originating from an ordinary differential equation, any state that can be reached by a feasible control can also be attained by a bang-bang control, see, e.g., \cite{LaSalle,Neustadt}.
Bang-bang controls also arise in optimal control problems,  specially in \textit{affine problems}, where the control appears linearly (and hence the name). This is mainly because they lack the so-called Tykhonov regularization term, and hence some regularity of minimizers is lost, or better said, it was never there; it was artificially added by the \textit{regularizer}.

In this paper, we study several aspects related to the stability of bang-bang minimizers, and moreover to instability phenomena. These aspects  have many important uses, e.g., they help to deal with  uncertainty of data, to understand the technicalities appearing in the numerical methods, to regularize problems, etc.  In order to show that the phenomena studied here are independent  of  particular control systems, we work in very general framework that allows us to cover several optimal control problems. We illustrate this with a handful of examples.

Let us now comment a bit on the related literature.  For optimal control problems governed by ordinary differential equations, the stability analysis of bang-bang minimizers started with \cite{Ursulabb} for linear quadratic problems, and continued with \cite{Ursulabb2} for more general affine systems. After that, several refinements and analyses of numerical schemes came. In \cite{Seyd1, BjsCH}, the accuracy of implicit discretization schemes was analysed under growth assumptions on the switching function; these same assumptions  were used  in \cite{Khelifa,Prei} to prove the convergence of gradient methods. In  \cite{CQS,VQreg} the stability of the first order necessary conditions was studied by means of the\textit{ metric regularity property}. In \cite{Bimetric,Osmo} more \textit{natural} assumptions than previous papers were introduced to generate results about stability and Lipschitz rates of convergence; these assumptions involved $L^1$-growths, similar to the classic coercivity condition, but with some modifications. Recently, in \cite{Mpc,Sozopol}, the \textit{metric subregularity} of the optimality mapping was used to prove the accuracy of the \textit{model predictive control} algorithm.

The stability analysis of bang-bang minimizers for problems constrained by partial differential equations started with \cite{Soac,Hinzebb}, where elliptic optimal control problems were considered. Since then, there are several papers dealing other type of problems with bang-bang minimizers; see, e.g., \cite{CCJ,Elliptic,NiFE,DWQN}. For  parabolic problems, we mention  \cite{Par}, where a study on the accuracy of variational discretization  was carried out;  and  \cite{ParCasas}, where stability with respect to initial data was analysed. In \cite{Casasfluids},  a fully discrete scheme is proposed and analyzed for a velocity tracking problem with bang-bang controls. Finally, we comment on the recent paper \cite{Idriss}, where bilinear problems with  $L^1$--$L^\infty$ constraints are considered; the authors proved that under certain hypotheses, the optimal controls must be bang-bang (our setting is different, but with a few changes applies to the problem considered there). We present a similar result regarding the bang-bang nature of optimal controls, but for linearly perturbed problems, see \Cref{Genericity} for more details.

We now describe the the organization of the paper and the contributions of each section.

In \Cref{S2}, we introduce the optimization problem and give the definition of the bang-bang property under consideration. The first result is the following characterization.
\begin{align*}
	\textit{A control is bang-bang iff every  sequence converging weakly to it converges strongly.}
\end{align*}
This result (\cref{cor:equiv}) is based on previous results from  \cite{Casasbang} and \cite{Visin}. Since  the proof is a bit involved and requires some external tools from measure theory and set-valued analysis, it is given in the Appendix. After that, we use the characterization to prove that strict local minimizers satisfy a growth condition iff they are bang-bang, see \Cref{dospt} for more  details and  references.

In \Cref{Genericity}, we prove that an affine problem is \textit{well-posed} in Tykhonov's sense iff it possesses a bang-bang  strict global minimizer.   We also use a smooth variational principle to prove that this situation is generic, yielding a result of the following form.
 \begin{align*}
 	\textit{Almost every linearly perturbed problem has a bang-bang strict global minimizer.}
 \end{align*}
In \Cref{Sstab}, we prove that for strict local minimizers, hemicontinuity  notions of stability under linear perturbations  coincide, and  all of them are equivalent to the bang-bang property; therefore, many undesirable  instability phenomena can occur when the minimizer is not bang-bang. This shows that the study of problems with singular arcs is truly different in nature from the pure bang-bang case; this can already been seen in previous publications dealing with the stability of \textit{bang-singular-bang} minimizers, see \cite{Felg2,Felgenhauer}. In \Cref{Ssubreg}, we continue the study of stability, but now  for the first order necessary condition. This done by means of a modification of the \textit{metric subregularity property}. We prove that for isolated critical points (isolated solutions of the first order necessary condition), the stability of the first order necessary conditions is equivalent to the bang-bang property. As an application of this, we prove a stability result concerning $L^p$-norm regularizations.

\Cref{Examples} is devoted to illustrate the applicability of our results by means of examples. The first example is an affine optimal control problems constrained by ordinary differential equations, being this the model for the theory here developed. The other examples are concerned with optimization problems constrained by partial differential equations; this includes a classical elliptic problem, and a velocity tracking one.

Finally,  Appendix A  gives  alternative proofs to some  of the results in \cite{Visin}, which to the best of the author's knowledge was not done before. We mention that some of the results in \cite{Visin} contain flaws, although not substantial ones; see the beginning  of  Appendix  A for more details.

\section{The abstract optimal control problem}\label{S2}

We consider an abstract optimization problem for which the bang-bang property can be defined. The feasible set resembles the usual \textit{control sets} appearing in optimal control theory.

\subsection{The model}
Let  $(X,\mathcal A,\mu)$ be a measure space and consider the set
\begin{align}\label{conset}
	\mathcal U := \{u \in {L^1(X)^m} : u(x)\in  U\quad \text{a.e.~in } X\},
\end{align}
where $U$ is a subset of $\mathbb R^m$. We consider the abstract optimization problem
\begin{align}\label{cost}
	\min_{u\in\mathcal U}\mathcal J(u),
\end{align}
where $\mathcal J:\mathcal U\to\mathbb R$ is a given real-valued function.  The set in (\ref{conset}) is called the \textit{feasible set}  and the function in (\ref{cost}) is called the \textit{objective functional}.

We consider problem (\ref{conset})--(\ref{cost}) under the following standing assumption.

\begin{assumption}\label{standingass}
	We require that the following statements hold.
		\begin{enumerate}
			\item $(X,\mathcal A,\mu)$ is a finite and nonatomic measure space;
			
			\item $U$ is a convex compact subset of $ \mathbb R^m$ that contains more than one element;
			
			\item $\mathcal J:\mathcal U\to\mathbb R$ is weakly sequentially continuous.
		\end{enumerate}
\end{assumption}

The previous assumption is very reasonable for affine optimal control problems. The reason is that in those problems, the control appears linearly (hence the name), and  the weak sequential continuity of the objective functional can be expected. Moreover, under \cref{standingass}, the feasible set enjoys many good properties.
\begin{proposition}\label{wcfs}
	The  feasible set $\mathcal U$ is a nonempty, convex and weakly sequentially compact subset of ${L^1(X)^m}$.
\end{proposition}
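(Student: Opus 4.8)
The plan is to check the three assertions in turn; the first two are immediate and the weak sequential compactness is the only substantive point. For nonemptiness, note that $U$ is nonempty, so picking any $u_0\in U$ the constant function $u\equiv u_0$ lies in $L^1(X)^m$ (because $\mu(X)<\infty$) and satisfies the pointwise constraint, hence $u\in\mathcal U$. For convexity, if $u,v\in\mathcal U$ and $t\in[0,1]$ then $t\,u(x)+(1-t)\,v(x)\in U$ for a.e.\ $x$ since $U$ is convex, and the convex combination is again in $L^1(X)^m$; thus $\mathcal U$ is convex.

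For the weak sequential compactness I would proceed via the Dunford--Pettis theorem. Since $U$ is compact it is contained in a ball $\{\xi\in\mathbb R^m:|\xi|\le R\}$ for some $R>0$, so every $u\in\mathcal U$ obeys $|u(x)|\le R$ for a.e.\ $x$; as $\mu(X)<\infty$ this shows $\mathcal U$ is bounded in $L^1(X)^m$. It is also uniformly integrable, because for any measurable set $A$ and any $u\in\mathcal U$ we have $\int_A|u|\,d\mu\le R\,\mu(A)$, which tends to $0$ uniformly in $u$ as $\mu(A)\to 0$. By the Dunford--Pettis theorem together with the Eberlein--\v{S}mulian theorem, $\mathcal U$ is relatively weakly sequentially compact in $L^1(X)^m$. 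Finally, $\mathcal U$ is closed in the norm topology: if $u_n\to u$ in $L^1(X)^m$, a subsequence converges pointwise a.e., and since $U$ is closed the limit takes values in $U$ a.e., so $u\in\mathcal U$. Being convex and norm-closed, $\mathcal U$ is weakly closed by Mazur's lemma, hence weakly sequentially closed; combined with relative weak sequential compactness this shows that $\mathcal U$ is weakly sequentially compact.

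The point to be careful about is that $L^1(X)^m$ is not reflexive, so mere boundedness does not yield (relative) weak sequential compactness; this is exactly why uniform integrability and the Dunford--Pettis theorem enter. Note also that no separability assumption on the measure space is needed, since the Dunford--Pettis theorem holds for arbitrary finite measure spaces; the nonatomicity hypothesis of \cref{standingass} plays no role here and will only be used later.
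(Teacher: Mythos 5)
Your proof is correct, and it takes a genuinely different (and more self-contained) route than the paper. The paper disposes of all three claims in one stroke by citing a general theorem on the set of $L^1$-selections of a measurable multifunction (applied to the constant multifunction $x \mapsto U$), which directly yields that $\mathcal U$ is nonempty, convex and weakly compact, and then invokes Eberlein--\v{S}mulian to pass to weak \emph{sequential} compactness. You instead verify nonemptiness and convexity by hand and obtain weak sequential compactness from first principles: uniform boundedness and uniform integrability of $\mathcal U$ (both immediate from the compactness of $U$ and $\mu(X)<\infty$), Dunford--Pettis plus Eberlein--\v{S}mulian for relative weak sequential compactness, and norm-closedness plus Mazur for weak sequential closedness. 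Both arguments are sound; yours makes transparent exactly which hypotheses of \cref{standingass} are used (boundedness and closedness of $U$, finiteness of $\mu$ --- and, as you correctly observe, neither nonatomicity nor any separability is needed), at the cost of being longer, while the paper's citation-based proof is shorter and reuses machinery that reappears in the Appendix (the same handbook's results on selection sets $\mathcal S_F^p$).
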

\begin{proof}
	It follows as a particular case of \cite[Theorem 6.4.23]{Anhand} that $\mathcal U$ is a nonempty convex  weakly compact subset of ${L^1(X)^m}$. From the Eberlein–\v Smulian Theorem, we can conclude that $\mathcal U$ must be weakly sequentially compact.
\end{proof}
From the previous proposition, for every sequence in the feasible set, we are able to choose a weakly convergent subsequence; which is extremely useful in existence arguments.
\begin{definition}
	Let $u^*\in\mathcal U$. We define the   minimality radius of $u^*$ as
	\begin{align*}
		\bar r_{u^*}:=\sup\left\lbrace \delta\ge0:\,\,\mathcal J(u^*)\le\mathcal J(u)\,\,\,\,\text{for all $u\in\mathcal U$ with $|u-u^*|_{{L^1(X)^m}}\le\delta$}  \right\rbrace.
	\end{align*}
	We say that $u^*$ is a local minimizer of problem (\ref{conset})--(\ref{cost}) if $\bar r_{u^*}>0$. We say that $u^*$ is a global minimizer of problem (\ref{conset})--(\ref{cost}) if $\bar r_{u^*}=+\infty$.
\end{definition}

From the weak sequential compactness of the feasible set and the weak sequential continuity of the objective functional, the existence of minimizers follows trivially.

\begin{proposition}
Problem  (\ref{conset})--(\ref{cost}) has at least one global minimizer. 
\end{proposition}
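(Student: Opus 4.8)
The plan is to run the direct method of the calculus of variations, with \Cref{wcfs} and item (iii) of \cref{standingass} as the only two real ingredients. First I would set $j := \inf_{u\in\mathcal U}\mathcal J(u)$. Since $\mathcal U\neq\emptyset$ by \Cref{wcfs} and $\mathcal J$ is real-valued on $\mathcal U$, we have $j<+\infty$, so there is a minimizing sequence $(u_n)_{n\in\mathbb N}\subset\mathcal U$ with $\mathcal J(u_n)\to j$.

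Next I would use the weak sequential compactness of $\mathcal U$ provided by \Cref{wcfs} to pass to a subsequence (not relabeled) and a point $u^*\in\mathcal U$ with $u_n\weakly u^*$ in ${L^1(X)^m}$. By the weak sequential continuity of $\mathcal J$ we get $\mathcal J(u_n)\to\mathcal J(u^*)$, and comparing with $\mathcal J(u_n)\to j$ yields $\mathcal J(u^*)=j$. In particular this forces $j\in\mathbb R$, the infimum is attained at $u^*\in\mathcal U$, and hence $\mathcal J(u^*)\le\mathcal J(u)$ for every $u\in\mathcal U$, i.e.\ $\bar r_{u^*}=+\infty$ and $u^*$ is a global minimizer.

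I do not expect any genuine obstacle: the argument is the textbook direct method and both hard facts (weak sequential compactness of the whole feasible set, weak sequential continuity of the objective) have been established already. The only point deserving a single line of care is ruling out $j=-\infty$, which is automatic here because along the extracted subsequence $\mathcal J(u_n)$ converges to the real number $\mathcal J(u^*)$; note that, unlike the usual setting, no lower semicontinuity plus coercivity is needed, since full weak sequential continuity together with compactness of all of $\mathcal U$ makes the conclusion immediate.
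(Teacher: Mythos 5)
Your proof is correct and is precisely the argument the paper has in mind: the proposition is stated right after the remark that existence ``follows trivially'' from the weak sequential compactness of $\mathcal U$ (\cref{wcfs}) and the weak sequential continuity of $\mathcal J$, which is exactly the direct method you carry out. Your extra remark about ruling out $j=-\infty$ via convergence along the subsequence is a fine touch but adds nothing beyond the paper's intended reasoning.
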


	\subsection{Bang-bang property}	
	
We give now a precise definition of the \emph{bang-bang property} for elements of $\mathcal U$. We denote by $\ext U$ the set of extreme points of $U$.

\begin{definition}
	We say that $u\in\mathcal U$ is bang-bang if 
	\begin{align*}
		u(x)\in \ext U\quad\text{for a.e. $x\in X$.} 
	\end{align*}
\end{definition}

Elements of the feasible set with the bang-bang property are of general interest because they saturate the pointwise constraints. For example, in the particular case of a convex polytope, bang-bang elements take only values on the vertices of the polytope almost everywhere. Recall that a convex polytope is the convex hull of finitely many points. In the $2$-dimensional and $3$-dimensional  cases, convex polytopes  are exactly polygons and polyhedra, respectively.

We now will state a couple of results concerned with the sequences converging weakly to bang-bang elements. The following result was proved first in \cite[Corollary 2]{Visin} using \cite[Theorem 1]{Visin}; however there are some inconsistencies in some of the proofs of \cite{Visin}. In the Appendix, we comment on some of the flaws in  \cite{Visin}, and  give an alternative proof of this result and other related ones.

\begin{proposition}
	Let $u^*\in\mathcal U$ be bang-bang and $\{u_n\}_{n=1}^\infty\subset\mathcal U$ be a sequence. If  $u_n\rightharpoonup u^*$ weakly in ${L^1(X)^m}$, then $|u_n-u^*|_{{L^1(X)^m}}\to0$.
\end{proposition}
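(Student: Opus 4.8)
Since $u^*$ and every $u_n$ take values in the compact --- hence bounded --- set $U$, the sequence $(u_n)$ is bounded in $L^\infty(X)^m$, and because $\mu$ is finite it is uniformly $\mu$-integrable. By Vitali's convergence theorem it therefore suffices to prove that $u_n \to u^*$ in $\mu$-measure, and since on a finite measure space convergence in measure is metrizable, it is in turn enough to show that every subsequence of $(u_n)$ admits a further subsequence converging to $u^*$ in measure. Fix such a subsequence; it still converges weakly to $u^*$ in $L^1(X)^m$ and is bounded in $L^\infty(X)^m$, so, after one more extraction which we do not relabel, it generates a Young (parametrized) measure $(\nu_x)_{x\in X}$, with $\nu_x$ a Borel probability measure on $U$ for a.e.\ $x$, meaning that $\int_X g(x)\,\varphi(u_n(x))\,d\mu \to \int_X g(x)\,\big(\int_U \varphi(v)\,d\nu_x(v)\big)\,d\mu$ for all $g\in L^\infty(X)$ and all $\varphi\in C(U)$.

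Choosing $\varphi$ to be the coordinate functions $v\mapsto v_j$ and recalling that $u_n\rightharpoonup u^*$ weakly in $L^1(X)^m$ is exactly the statement $\int_X g\,(u_n)_j\,d\mu \to \int_X g\,(u^*)_j\,d\mu$ for every $g\in L^\infty(X)$, we get that the barycenter $\int_U v\,d\nu_x(v)$ equals $u^*(x)$ for a.e.\ $x$. But $u^*$ is bang-bang, so $u^*(x)\in\ext U$ for a.e.\ $x$, and a Borel probability measure on a compact convex set whose barycenter is an extreme point must be the Dirac mass at that point; hence $\nu_x=\delta_{u^*(x)}$ for a.e.\ $x$. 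A parametrized measure of the form $x\mapsto\delta_{u^*(x)}$ is generated by a sequence if and only if that sequence converges to $u^*$ in measure; therefore the chosen subsequence converges to $u^*$ in $\mu$-measure, which is what we needed.

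The delicate point I expect to be the main obstacle is the claim that the only representing measure of an extreme point $e$ of a compact convex set $U$ is $\delta_e$. When $e$ is exposed this is immediate: a functional $\xi$ with $\xi\cdot e>\xi\cdot v$ for all $v\in U\setminus\{e\}$ forces, via $\xi\cdot e=\int_U \xi\cdot v\,d\nu_e(v)$, the measure $\nu_e$ to concentrate at $e$. In particular, when $U$ is a polytope --- all of whose vertices are exposed --- the whole proposition has a short self-contained proof: write $X_i:=\{x:u^*(x)=v_i\}$ over the finitely many vertices $v_1,\dots,v_k$ of $U$, pick for each $i$ a functional $\ell_i$ in the interior of the normal cone of $U$ at $v_i$ (so that $\ell_i\cdot(v_i-v)\ge c\,|v-v_i|$ for all $v\in U$, with $c>0$ independent of $i$), set $\psi:=\ell_i$ on $X_i$ to get $\psi\in L^\infty(X)^m$, and read off $c\,|u_n-u^*|_{L^1(X)^m}\le\int_X\psi\cdot(u^*-u_n)\,d\mu\to0$ from the weak convergence. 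For a general compact convex $U$ one cannot reduce to exposed points so cheaply and must invoke the Choquet-theoretic characterization of extreme points by their representing measures; this, together with the need to develop the Young-measure machinery over an abstract finite nonatomic measure space (e.g.\ by reducing via a measure isomorphism to $[0,1]$ with Lebesgue measure, or by re-deriving the fundamental theorem directly), is what makes the argument technical, and is presumably why it is placed in the appendix --- where, alternatively, the statement can simply be read off from a strong-convergence theorem of Visintin \cite{Visin}.
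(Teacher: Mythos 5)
Your argument is correct in outline but follows a genuinely different route from the paper. The paper reduces \cref{corynq} to a scalar statement: it constructs (measurably in $x$) an orthonormal frame $e_1(x),\dots,e_m(x)$ by iteratively slicing $F(x)$ with normal cones at the extreme point (\cref{norconlem}), and then applies $m$ times the one-dimensional lemma that weak $L^1$ convergence plus an a.e.\ lower bound $\liminf_n v_n\ge 0$ forces strong convergence (\cref{lem:weak_gives_strong_nonneg}, proved via Dunford--Pettis equi-integrability and Egorov). That route is elementary, self-contained, and works verbatim on an arbitrary measure space, which is precisely the point of the appendix (dropping Visintin's completeness and $\sigma$-finiteness hypotheses). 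Your route --- Vitali reduction to convergence in measure, Young measures, identification of the barycenter with $u^*(x)$, and Bauer's theorem that an extreme point has $\delta_e$ as its unique representing measure --- is conceptually more transparent about \emph{why} extremality matters, and would extend with little change to $x$-dependent constraint sets $F(x)$; but it front-loads two nontrivial pieces of machinery (existence/disintegration of Young measures and the Choquet--Bauer uniqueness statement, the latter needing Milman's converse to Krein--Milman for non-exposed extreme points). Your polytope shortcut via functionals in the interiors of the normal cones is correct and is essentially the paper's normal-cone idea in the case where one slicing step suffices.

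One caveat on your patch for abstract measure spaces: ``reducing via a measure isomorphism to $[0,1]$'' is not available here, because $(X,\mathcal A,\mu)$ is not assumed separable, and the extraction of a Young-measure-generating subsequence (weak* sequential compactness in the dual of $L^1(X;C(U))$) does require separability of $L^1(X)$. The fix is to first replace $\mathcal A$ by the countably generated sub-$\sigma$-algebra generated by $u^*$ and the $u_n$: weak convergence in $L^1$ passes to the smaller $\sigma$-algebra (its $L^\infty$ embeds into the larger one), and the quantity $|u_n-u^*|_{L^1(X)^m}$ is unchanged, so both hypothesis and conclusion survive the reduction; the resulting separable measure space then supports the Young-measure argument. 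With that adjustment your proof closes, but note it is strictly heavier than the paper's and delivers exactly the same statement.
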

\begin{proof}
	This is a particular case of \cref{Visintinthm}.
\end{proof}

The phenomenon described in the previous proposition is well known in the calculus of variations, where weakly convergent minimizing sequences are usually also strongly convergent.  However, this is not always the case; the existence of bad behaved sequences follows from the following \textit{weak clustering principle}. For any element of the feasible set without the bang-bang property, it is  possible to find a sequence in the feasible set converging to it such that the sequence clusters on a sphere of arbitrarily small radius.

\begin{proposition}\label{csequences}
	Let $u^*\in\mathcal U$ . If $u^*$ is not bang-bang, 
	there exists $\delta_0>0$ such that for every $\delta\in(0,\delta_0]$ there exists a sequence $\{u_n\}_{n\in\mathbb N}\subset\mathcal U$ with the following properties. 
	\begin{enumerate}
		\item $|u_{n}-u^*|_{{L^1(X)^m}}=\delta$ for all $n\in\mathbb N$; 
		
		\item $u_n\rightharpoonup u^*$ weakly in ${L^1(X)^m}$.
	\end{enumerate}
\end{proposition}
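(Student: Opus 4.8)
The plan is to perturb $u^*$, on the set where it fails to be extremal, by an oscillating multiple of a fixed feasible direction: feasibility will come from convexity of $U$, the prescribed $L^1$-norm from a scalar rescaling, and the weak convergence from the Riemann--Lebesgue mechanism, which is available precisely because $\mu$ is nonatomic. \emph{Step 1 (a feasible perturbation direction).} Since $u^*$ is not bang-bang, the set $A:=\{x\in X:u^*(x)\notin\ext U\}$ has positive measure, and it is measurable because $\ext U$ is a Borel subset of $U$. For $x\in A$ the set $F(x):=(U-u^*(x))\cap(u^*(x)-U)$ is convex, compact, symmetric and strictly larger than $\{0\}$, since a point $p\in U$ is extremal if and only if there is no $d\neq0$ with $p\pm d\in U$. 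The multifunction $x\mapsto F(x)$ has measurable graph, so a measurable selection theorem — applied, say, to the maximal-norm selection — yields a measurable $h\colon X\to\mathbb R^m$ with $h=0$ outside $A$, $h(x)\neq0$ for $x\in A$, and $u^*(x)\pm h(x)\in U$ for a.e.\ $x$. By convexity of $U$ (see \cref{standingass}) this gives $u^*+t\,h\in\mathcal U$ for every $t\in[-1,1]$, and since $U$ is bounded we have $h\in L^\infty(X)^m$ and $\delta_0:=\int_A|h|\,\mathrm d\mu\in(0,\infty)$.

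\emph{Step 2 (an oscillating sign pattern).} Since $A$, equipped with the restriction of $\mu$, is a finite nonatomic measure space, repeated bisection — every set of positive measure splits into two subsets of half its measure, by the intermediate-value property of nonatomic measures — produces a dyadic tree of measurable subsets of $A$; let $\e_n\colon A\to\{-1,1\}$ be the associated $n$-th Rademacher-type function, equal to $+1$ and to $-1$ on the two children of each set at level $n-1$, and extend it by $0$ outside $A$. Writing $\mathcal B_n$ for the $\sigma$-algebra generated by the first $n$ levels, one has $\mathbb E[\e_n\mid\mathcal B_{n-1}]=0$, so for any $\psi\in L^1(A)$, with $M_n:=\mathbb E[\psi\mid\mathcal B_n]$,
\[
	\Bigl|\int_A\e_n\,\psi\,\mathrm d\mu\Bigr|=\Bigl|\int_A\e_n\,M_n\,\mathrm d\mu\Bigr|=\Bigl|\int_A\e_n\,(M_n-M_{n-1})\,\mathrm d\mu\Bigr|\le\|M_n-M_{n-1}\|_{L^1(A)}\longrightarrow0,
\]
because $(M_n)_{n\in\mathbb N}$ is a uniformly integrable martingale and hence converges in $L^1(A)$. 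It follows that $\e_n h\rightharpoonup0$ in $L^1(X)^m$: for $g\in L^\infty(X)^m$ the function $\langle h,g\rangle$ lies in $L^\infty(A)\subset L^1(A)$, whence $\int_X\langle\e_n h,g\rangle\,\mathrm d\mu=\int_A\e_n\,\langle h,g\rangle\,\mathrm d\mu\to0$.

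\emph{Step 3 (conclusion and the main obstacle).} Given $\delta\in(0,\delta_0]$, set $u_n:=u^*+(\delta/\delta_0)\,\e_n\,h$. Since $(\delta/\delta_0)\,\e_n(x)\in[-1,1]$, Step~1 gives $u_n\in\mathcal U$; moreover $|u_n-u^*|_{L^1(X)^m}=(\delta/\delta_0)\int_A|h|\,\mathrm d\mu=\delta$ for every $n$, which is~(i), and $u_n-u^*=(\delta/\delta_0)\,\e_n h\rightharpoonup0$ in $L^1(X)^m$ by Step~2, which is~(ii). The only genuinely delicate point is Step~2, namely the construction of a weakly null sequence of $\{-1,1\}$-valued functions on an abstract — possibly non-separable — finite measure space; this is exactly where the non-atomic hypothesis is indispensable, and on an atomic space no such sequence exists, which is why the conclusion fails there. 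One can reach the same end by pushing Rademacher functions forward through a measure-preserving map $A\to[0,1]$ and invoking the classical Riemann--Lebesgue lemma; alternatively, if the characterisation in \cref{cor:equiv} is available, one may instead take any weakly convergent, not strongly convergent, sequence in $\mathcal U$ and rescale it toward $u^*$, which stays feasible by convexity and whose distance to $u^*$ can be tuned to $\delta$.
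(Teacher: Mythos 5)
Your proof is correct, but it follows a genuinely different route from the paper's. The paper deduces the statement from \cref{Clustering}: there, the failure of pointwise extremality is first converted into a decomposition $u^*=(\alpha+\beta)/2$ with distinct feasible $\alpha,\beta$ (via the identity $\ext\mathcal S^p_F=\mathcal S^p_{\ext F}$ for decomposable sets), and the oscillating sequence is then obtained abstractly from the weak density of extremal selections, $\overline{\mathcal S^p_{\ext F}}^{w}=\mathcal S^p_F$, applied to the segment-valued map $x\mapsmto[\alpha(x),\beta(x)]$, with Day's lemma used to extract an actual sequence from the (possibly non-metrizable) weak closure; the prescribed norm is then arranged by the same rescaling you use. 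You replace both imported theorems by an explicit construction: a measurable symmetric perturbation direction $h$ with $u^*\pm h\in U$ (your $h$ is, up to sign, the paper's $(\beta-\alpha)/2$), and hand-built Rademacher-type signs whose weak nullity you prove by a clean martingale-difference argument. Your version is more elementary and self-contained --- it needs only the Kuratowski--Ryll-Nardzewski selection theorem, Sierpi\'nski's bisection property of nonatomic measures, and $L^1$ martingale convergence --- whereas the paper leans on general machinery for decomposable sets that it needs elsewhere in the appendix anyway; both proofs invoke nonatomicity at exactly one point, you for the bisection, the paper inside the density theorem for extremal selections. Two minor cautions: the measurable-selection step for $h$ deserves one more line (e.g.\ select from $\{d\in F(x):|d|\ge\tfrac12\rho(x)\}$ with $\rho(x):=\max\{|d|:d\in F(x)\}$, which is measurable with nonempty compact values on $A$); and your closing alternative via \cref{cor:equiv} is circular, since the implication of \cref{cor:equiv} you would invoke is precisely the content of the present proposition.
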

\begin{proof}
	This is a particular case of \cref{Clustering}.
\end{proof}

The two previous results can be combined to obtain the following characterization of the bang-bang property.
\begin{theorem}\label{cor:equiv}
	Let $u^*\in\mathcal U$. The following statements are equivalent.
	\begin{enumerate}
		\item $u^*$ is bang-bang.
		\item $u_n\rightharpoonup u^*$ weakly in ${L^1(X)^m}$ implies $|u_n-u^*|_{{L^1(X)^m}}\to 0$ for any sequence $\{u_n\}_{n\in\mathbb N}\subset\mathcal U$.
	\end{enumerate}
\end{theorem}

The next result was proved in \cite[Theorem 2.1]{Casasbang} for local minimizers, in the particular case when the constraints in (\ref{conset}) are box-like (in the one-dimensional case $m = 1$) and under the additional assumptions of separability and completeness of the measure space $(X,\mathcal A,\mu)$.
\begin{corollary}\label{VpCasas}
	Let $u^*\in\mathcal U$. Suppose that $u^*$ is not bang-bang. 
	Then, there exists $\delta_0 > 0$ such that for any $\delta \in (0,\delta_0]$ and
	for any $\varepsilon > 0$,
	there exists $u \in \mathcal U$
	with
	\begin{equation}
		|{u - u^*}|_{{L^1(X)^m}} = \delta\ \text{ and }\ \mathcal J(u)\le\mathcal J(u^*) + \varepsilon.
	\end{equation}
\end{corollary}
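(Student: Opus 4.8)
The plan is to combine the weak-clustering principle from \Cref{csequences} with the weak sequential continuity of $\mathcal J$. Since $u^*$ is not bang-bang, \Cref{csequences} provides a threshold $\delta_0 > 0$ such that for every $\delta \in (0,\delta_0]$ there is a sequence $\{u_n\}_{n\in\mathbb N} \subset \mathcal U$ with $|u_n - u^*|_{{L^1(X)^m}} = \delta$ for all $n$ and $u_n \rightharpoonup u^*$ weakly in ${L^1(X)^m}$. This $\delta_0$ is the one we use in the corollary.

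Now fix any $\delta \in (0,\delta_0]$ and any $\varepsilon > 0$, and take the corresponding sequence $\{u_n\}$. By item (iii) of \cref{standingass}, $\mathcal J$ is weakly sequentially continuous, so $\mathcal J(u_n) \to \mathcal J(u^*)$ as $n \to \infty$. Hence there exists an index $n$ with $\mathcal J(u_n) \le \mathcal J(u^*) + \varepsilon$. Setting $u := u_n$ gives an element of $\mathcal U$ with $|u - u^*|_{{L^1(X)^m}} = \delta$ and $\mathcal J(u) \le \mathcal J(u^*) + \varepsilon$, which is exactly the claim.

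There is essentially no obstacle here: the entire content of the statement has already been packaged into \Cref{csequences}, and all that remains is to feed the clustering sequence into the weak continuity of the objective. The only point requiring a word of care is that $\delta_0$ must be chosen \emph{before} fixing $\varepsilon$ — which is automatic, since \Cref{csequences} produces $\delta_0$ depending only on $u^*$ and not on any $\varepsilon$ — and that the quantifiers in the conclusion are handled in the right order ($\delta_0$ first, then for all $\delta$, then for all $\varepsilon$). The proof is therefore short, and I would write it essentially as the two short paragraphs above.
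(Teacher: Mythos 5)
Your proof is correct and is exactly the argument the paper intends: it cites \cref{csequences} together with the weak sequential continuity of $\mathcal J$, and you have simply written out the details of extracting a suitable term from the clustering sequence. No issues.
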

\begin{proof}
	This follows from \cref{csequences} and the weak  sequential continuity of the objective functional.  
\end{proof}

\subsection{Growth of the objective functional at strict local minimizers}\label{dospt}

We begin recalling the definition of strict minimality, both local and global.
\begin{definition}
	Let $u^*\in\mathcal U$. We define the  strict minimality radius of $u^*$ as
	\begin{align*}
		\hat r_{u^*}:=\sup\left\lbrace \delta\ge0:\,\mathcal J(u^*)<\mathcal J(u)\,\,\,\, \text{for all $u\in\mathcal U\setminus\{u^*\}$ with $|u-u^*|_{{L^1(X)^m}}\le\delta$}  \right\rbrace.
	\end{align*}
	We say that $u^*$ is a strict local minimizer of problem (\ref{conset})--(\ref{cost}) if $\hat r_{u^*}>0$. We say that $u^*$ is a strict global minimizer of problem (\ref{conset})--(\ref{cost}) if $\hat r_{u^*}=+\infty$.
\end{definition}

It was proved in \cite[Section 2.1]{Casasbang} that
\emph{no growth} of the objective functional at minimizer can occur if the minimizer is not bang-bang. The proof was given for H\"older-type growths, and they point out that the argument also works for more general type of growths. We give here the argument for completeness.

\begin{proposition}
	Let $u^*\in\mathcal U$. Suppose that there exist   $\delta>0$ and a function $\omega:(0,\infty)\to(0,\infty)$ such that 
	\begin{align*}
		\mathcal J(u)\ge \mathcal J(u^*)+\omega\big(|u-u^*|_{{L^1(X)^m}}\big)
	\end{align*}
	for all  $u\in\mathcal U\setminus\{u^*\}$ with $|u-u^*|_{{L^1(X)^m}}\le\delta$. Then $u^*$ is  a bang-bang strict local minimizer of problem (\ref{conset})--(\ref{cost}). Moreover,  $\delta\le\hat r_{u^*}$.
\end{proposition}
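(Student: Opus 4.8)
The plan is to separate the statement into its three assertions—strict local minimality, the bound $\hat r_{u^*}\ge\delta$, and the bang-bang property—and to observe that the first two are essentially immediate while the third follows by contradiction from the weak clustering results established above.

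First I would dispose of strict local minimality together with the radius estimate. If $u\in\mathcal U\setminus\{u^*\}$ satisfies $|u-u^*|_{{L^1(X)^m}}\le\delta$, then the growth hypothesis gives $\mathcal J(u)\ge\mathcal J(u^*)+\omega\big(|u-u^*|_{{L^1(X)^m}}\big)$, and since $\omega$ takes values in $(0,\infty)$ we get $\mathcal J(u)>\mathcal J(u^*)$. By the very definition of the strict minimality radius this yields $\hat r_{u^*}\ge\delta>0$, so $u^*$ is a strict local minimizer and the last claim of the proposition is proved.

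The remaining point is that $u^*$ must be bang-bang, which I would argue by contradiction. Suppose $u^*$ is not bang-bang. Then \cref{VpCasas} furnishes $\delta_0>0$ such that for every $\rho\in(0,\delta_0]$ and every $\varepsilon>0$ there is $u\in\mathcal U$ with $|u-u^*|_{{L^1(X)^m}}=\rho$ and $\mathcal J(u)\le\mathcal J(u^*)+\varepsilon$. Fix $\rho:=\min\{\delta,\delta_0\}>0$ and an arbitrary $\varepsilon>0$, and let $u$ be the corresponding element. Since $\rho>0$ we have $u\neq u^*$, and since $\rho\le\delta$ the growth hypothesis applies to $u$ and gives $\mathcal J(u)\ge\mathcal J(u^*)+\omega(\rho)$. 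Comparing the two inequalities for $\mathcal J(u)$ yields $\omega(\rho)\le\varepsilon$; as $\varepsilon>0$ was arbitrary, $\omega(\rho)\le0$, contradicting $\omega(\rho)>0$. Hence $u^*$ is bang-bang.

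I do not expect any genuine obstacle: the entire argument rests on the incompatibility between a strictly positive growth at $u^*$ and the existence of the near-optimal sequences on spheres of small radius guaranteed by \cref{csequences}/\cref{VpCasas} when $u^*$ fails to be bang-bang. (Alternatively, one could invoke \cref{csequences} directly together with the weak sequential continuity of $\mathcal J$ instead of citing \cref{VpCasas}; the two routes are interchangeable.)
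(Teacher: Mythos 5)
Your proposal is correct and follows essentially the same route as the paper: the first part is identical, and for the bang-bang part the paper applies \cref{csequences} directly together with the weak sequential continuity of $\mathcal J$ to force $\omega(\eta)\le 0$, which is exactly the content of \cref{VpCasas} that you invoke instead. The two formulations are interchangeable, as you note yourself, so there is no substantive difference.
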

\begin{proof}
If $u\in\mathcal U\setminus\{u^*\}$ satisfies $|u-u^*|_{{L^1(X)^m}}\le\delta$, then
	\begin{align*}
		\mathcal J(u)\ge \mathcal J(u^*)+\omega(|u-u^*|_{{L^1(X)^m}})>\mathcal J(u^*).
	\end{align*}
	Thus, $u^*$ must be a strict local minimizer of problem (\ref{conset})--(\ref{cost}) and $\delta\le\hat r_{u^*}$.
	Suppose that $u^*$ is not bang-bang, and let $\delta_0$ be the positive number in Proposition \ref{csequences}. Then there exist $\eta\in(0,\min\{\delta,\delta_0\})$  and a sequence $\{u_n\}_{n\in\mathbb N}\subset\mathcal U$, satisfying  $|u_{n}-u|_{{L^1(X)^m}}=\eta$ for all $n\in\mathbb N$, such that $u_n\rightharpoonup u$ weakly in ${L^1(X)^m}$. Then, 
	\begin{align*}
		\mathcal J(u_n)\ge \mathcal J(u^*)+\omega(\eta)\quad\text{for all $n\in\mathbb N$.}
	\end{align*}
	Since $\mathcal J$ is weakly sequentially continuous, we get $\omega(\eta)\le0$. A contradiction.
\end{proof}

The converse of the previous result is also true, the objective functional must satisfy a growth condition at bang-bang strict local minimizers.

\begin{proposition}\label{growthbang}
	Let $u^*\in\mathcal U$ be a strict local minimizer of problem (\ref{conset})--(\ref{cost}). Suppose that $u^*$ is bang-bang. Then there exist $\delta\in(0,\hat r_{u^*})$ and a non-decreasing function $\omega:(0,\infty)\to(0,\infty)$ such that 
	\begin{align*}
		\mathcal J(u)\ge \mathcal J(u^*)+\omega\big(|u-u^*|_{{L^1(X)^m}}\big)
	\end{align*}
	for all $u\in\mathcal U\setminus\{u^*\}$ with $|u-u^*|_{{L^1(X)^m}}\le\delta$.
\end{proposition}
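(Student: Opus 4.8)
The plan is to build $\omega$ as a marginal (value) function attached to $u^*$ and to extract its positivity from the bang-bang property together with weak sequential compactness. Fix any $\delta\in(0,\hat r_{u^*})$ (possible since $\hat r_{u^*}>0$, and if $\hat r_{u^*}=+\infty$ simply take $\delta=1$). For $t\in(0,\delta]$ define
\[
	g(t):=\inf\big\{\mathcal J(u)-\mathcal J(u^*)\ :\ u\in\mathcal U,\ t\le|u-u^*|_{{L^1(X)^m}}\le\delta\big\},
\]
with the convention $\inf\emptyset=+\infty$. Two features are immediate: first, $g$ is non-decreasing, because the admissible set in its definition shrinks as $t$ grows; second, $g(t)>-\infty$, since $\mathcal J$ is bounded on $\mathcal U$ (being weakly sequentially continuous on the weakly sequentially compact set $\mathcal U$, cf.\ \cref{wcfs}). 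One then sets $\omega(t):=\min\{g(t),1\}$ for $t\in(0,\delta]$ and extends it by $\omega(t):=\omega(\delta)$ for $t>\delta$; this $\omega:(0,\infty)\to(-\infty,1]$ is non-decreasing. Granting for the moment that $g(t)>0$ for every $t\in(0,\delta]$, we get $\omega:(0,\infty)\to(0,\infty)$ non-decreasing, and for any $u\in\mathcal U\setminus\{u^*\}$ with $|u-u^*|_{{L^1(X)^m}}\le\delta$, putting $t:=|u-u^*|_{{L^1(X)^m}}\in(0,\delta]$ the element $u$ is admissible in the definition of $g(t)$, hence $\mathcal J(u)-\mathcal J(u^*)\ge g(t)\ge\omega(t)=\omega(|u-u^*|_{{L^1(X)^m}})$, which is exactly the asserted inequality.

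So the whole matter reduces to the claim that $g(t)>0$ for each $t\in(0,\delta]$. Suppose, for contradiction, that $g(t)\le 0$ for some such $t$. Then the admissible set is nonempty and we may choose a sequence $\{u_n\}_{n\in\mathbb N}\subset\mathcal U$ with $t\le|u_n-u^*|_{{L^1(X)^m}}\le\delta$ and $\mathcal J(u_n)\to\mathcal J(u^*)+g(t)\le\mathcal J(u^*)$. By \cref{wcfs} and the Eberlein–\v Smulian argument already recorded there, a subsequence (not relabeled) satisfies $u_n\rightharpoonup u$ weakly in ${L^1(X)^m}$ for some $u\in\mathcal U$. By \cref{standingass}(iii), $\mathcal J(u)=\lim_n\mathcal J(u_n)\le\mathcal J(u^*)$; and by weak lower semicontinuity of $|\cdot|_{{L^1(X)^m}}$ we get $|u-u^*|_{{L^1(X)^m}}\le\liminf_n|u_n-u^*|_{{L^1(X)^m}}\le\delta<\hat r_{u^*}$.

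Now a dichotomy. If $u\neq u^*$, then strict local minimality of $u^*$ forces $\mathcal J(u)>\mathcal J(u^*)$, contradicting $\mathcal J(u)\le\mathcal J(u^*)$. Hence $u=u^*$, i.e.\ $u_n\rightharpoonup u^*$ weakly in ${L^1(X)^m}$; since $u^*$ is bang-bang, \cref{cor:equiv} (equivalently, the Visintin-type proposition preceding it) yields $|u_n-u^*|_{{L^1(X)^m}}\to 0$, which contradicts $|u_n-u^*|_{{L^1(X)^m}}\ge t>0$. Either way we reach a contradiction, so $g(t)>0$, completing the proof.

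\textbf{Main obstacle.} The only delicate point is the handling of the weak limit $u$: one must simultaneously keep $u$ inside the strict-minimality ball and inside the "cost-non-increasing" regime, while the $L^1$-norm is only weakly lower semicontinuous, so the information $|u_n-u^*|\ge t$ is \emph{not} inherited by $u$. The trick is precisely to use the two-sided constraint $t\le|u-u^*|\le\delta$ in the definition of $g$ rather than the equality $|u-u^*|=t$: the upper bound passes to the weak limit (keeping us below $\hat r_{u^*}$), while the lower bound is needed only to contradict \emph{strong} convergence in the bang-bang alternative, which is exactly what \cref{cor:equiv} supplies.
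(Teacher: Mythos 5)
Your proposal is correct and follows essentially the same route as the paper: both define the marginal function $\inf\{\mathcal J(u)-\mathcal J(u^*): u\in\mathcal U,\ t\le|u-u^*|_{L^1(X)^m}\le\delta\}$ on the annulus, and prove its positivity by extracting a weak limit from a minimizing sequence, identifying it with $u^*$ via strict local minimality, and using the bang-bang property (\cref{cor:equiv}) to upgrade weak to strong convergence, contradicting the lower bound $t$. The only cosmetic difference is your capping $\omega$ at $1$ to handle an empty admissible set, where the paper instead restricts $\delta$ below $\sup_{u\in\mathcal U}|u-u^*|_{L^1(X)^m}$.
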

\begin{proof}
	Let $M:=\sup_{u\in\mathcal U}|u-u^*|_{L^1(X)^m}$ and $\delta\in(0,\min\{M,\hat r_{u^*}\})$ be arbitrary. Let  $\omega_{\delta}:(0,\delta]\to(0,\infty)$ be given by
	\begin{align*}
		\omega_\delta(\eta):=\inf\left\lbrace \mathcal J(u)-\mathcal J(u^*): u\in\mathcal U\quad\text{and}\quad \eta\le|u-u^*|_{{L^1(X)^m}}\le\delta \right\rbrace.
	\end{align*}
	By construction, $\omega$ is nonnegative and  non-decreasing. Suppose that there exists $\eta\in(0,\delta]$ such that $\omega(\eta)=0$. By definition of infimum, there would exist a sequence $\{u_n\}_{n\in\mathbb N}\subset\mathcal U$ such that
	\begin{align}\label{genineqref}
		\eta\le|u_n-u^*|_{L^1(X)^m}\le\delta\quad\text{and}\quad0<\mathcal J(u_n)-\mathcal J(u^*)\le\frac{1}{n}\quad\text{for all $n\in\mathbb N$}.
	\end{align}
	We can extract a subsequence $\{u_{n_k}\}_{k\in\mathbb N}$ of $\{u_{n}\}_{n\in\mathbb N}$ converging weakly in ${L^1(X)^m}$ to some $\hat u\in\mathcal U$. Since $\mathcal J$ is weakly sequentially continuous, from (\ref{genineqref}), we get $\mathcal J(\hat u)=\mathcal J(u^*)$. Since $u^*$ is a strict local minimizer and
	\begin{align*}
		|\hat u-u^*|_{{L^1(X)^m}}\le\liminf_{k\to\infty}|u_{n_k}-u^*|_{{L^1(X)^m}}\le \delta<\hat r_{u^*},
	\end{align*}
	we conclude $\hat u=u^*$. This implies that $\{u_{n_k}\}_{k\in\mathbb N}$ converges weakly to $u^*$ in ${L^1(X)^m}$, and as $u^*$ is bang-bang, $\{u_{n_k}\}_{k\in\mathbb N}$ must converge to $u^*$ in ${L^1(X)^m}$; a contradiction. The result follows defining $\omega:(0,\infty)\to(0,\infty)$ as
	$\omega( \eta ) := \omega_\delta( \min( \eta, \delta ))$.
\end{proof}


\section{Genericity of the bang-bang property}\label{Genericity}

\subsection{The Radon--Nikod\'ym property}
One of the drawbacks of Bochner integration theory is that the Radon–Nikod\'ym Theorem fails to hold in general.  Sets for which this result still holds define an important class  that has been studied extensively; see, e.g., the specialized book \cite{GeoRNP}.

In this subsection, we give a short review of these sets in the particular case where the underlying Banach space is ${L^1(X)^m}$. We begin with the definition of dentability; for the general definition  see \cite[Definition 6.3.2]{VarTech} or \cite[Section 1.7.1]{Penot2013}.
\begin{definition}
	Let $\mathcal W$ be a nonempty subset of  ${L^1(X)^m}$. The elements of  $\{S(\mathcal W,\xi,\delta):\, \xi\in {L^\infty(X)^m},\,\delta>0\}$ are called slices of $\mathcal W$, where  
	\begin{align*}
		S\big(\mathcal W,\xi,\delta\big):=\{\,w\in\mathcal W: \,  \xi w\le \inf_{v\in\mathcal W}\xi v+\delta\}\quad \text{for $\xi\in {L^\infty(X)^m}$ and $\delta>0$}.
	\end{align*}
	We say that the subset $\mathcal W$ of $L^1(X)^m$ is dentable if it admits arbitrarily small slices, i.e., 
	 for every $\varepsilon>0$ there exist $\delta>0$ and $\xi\in {L^\infty(X)^m}$ such that $\diam S(\mathcal W,\xi,\delta)\le\varepsilon$.
\end{definition}

We now give the definition of the Radon--Nikod\'ym Property (RNP). There are many different ones, as the property has been characterized in many ways, see, e.g., the book \cite{GeoRNP} or the survey \cite{HuffGeoRNP} for geometrical characterizations. Here we give a definition based on dentability of sets, see \cite[Definition 6.3.3]{VarTech} or \cite[Section 1.7.1]{Penot2013}.

\begin{definition}
	A subset $\mathcal V$ of ${L^1(X)^m}$ has the Radon--Nikod\'ym Property  if every nonempty bounded subset $\mathcal W$ of $\mathcal V$ is dentable.
\end{definition}

We mention that the family of sets having the RNP is quite rich, it includes all reflexive spaces, see \cite[Corollary 4.1.5]{GeoRNP}, in particular the $L^p$-spaces for $p\in(1,\infty)$. There are non-reflexive spaces without the RNP, such as $L^1([0,1])$, see  \cite[Example 2.1.2]{GeoRNP}. However, subsets of $L^1$-spaces might possess the property even if the whole space does not have it. For example, it is known that weakly compact convex subsets of Banach spaces have the RNP, see \cite[Theorem 3.6.1]{GeoRNP}.
\begin{proposition}\label{RNPconset}
	The feasible set $\mathcal U$ has the Radon--Nikod\'ym Property.
\end{proposition}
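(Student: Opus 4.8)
The plan is to show that $\mathcal U$ has the Radon--Nikod\'ym Property by invoking the fact, already quoted in the excerpt from \cite[Theorem 3.6.1]{GeoRNP}, that every weakly compact convex subset of a Banach space has the RNP. Indeed, \Cref{wcfs} established that $\mathcal U$ is a nonempty, convex, and weakly sequentially compact subset of $L^1(X)^m$, and the proof of that proposition also recorded that $\mathcal U$ is weakly compact (via \cite[Theorem 6.4.23]{Anhand}). So the argument is essentially a one-line reduction: $\mathcal U$ is weakly compact and convex, hence it has the RNP.

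First I would restate the two ingredients explicitly: (i) $\mathcal U$ is convex and weakly compact in $L^1(X)^m$, which is \Cref{wcfs} (together with its proof); and (ii) the theorem that weakly compact convex sets have the RNP. Then I would simply combine them. If one wanted to be slightly more self-contained, one could note that any bounded (in fact any) subset $\mathcal W$ of $\mathcal U$ has weakly compact convex closed hull $\overline{\operatorname{conv}}^{\,w}(\mathcal W)\subset\mathcal U$ (using that $\mathcal U$ is weakly compact and convex, so weakly closed), and dentability of $\mathcal W$ follows from dentability of this hull, since a set is dentable whenever its closed convex hull is. But the cleanest route is just to quote the structural theorem directly for $\mathcal U$ itself.

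There is no real obstacle here; the only thing to be careful about is the precise statement of the cited theorem — whether it gives the RNP for weakly compact convex sets as such, or whether one must first pass to a closed convex hull. Since the excerpt already asserts the former form ("weakly compact convex subsets of Banach spaces have the RNP"), I would use it verbatim. Thus the proof reads essentially as follows:

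\begin{proof}
	By \Cref{wcfs} (see its proof), the feasible set $\mathcal U$ is a convex and weakly compact subset of $L^1(X)^m$. Since every weakly compact convex subset of a Banach space has the Radon--Nikod\'ym Property by \cite[Theorem 3.6.1]{GeoRNP}, the conclusion follows.
\end{proof}
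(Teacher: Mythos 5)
Your proof is correct and follows essentially the same route as the paper: both reduce the claim to the convexity and weak compactness of $\mathcal U$ (the paper recovers weak compactness from weak sequential compactness via Eberlein--\v{S}mulian, while you read it off directly from the proof of \cref{wcfs}) and then invoke \cite[Theorem 3.6.1]{GeoRNP}. No issues.
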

\begin{proof}
			By \cref{wcfs}, $\mathcal U$ is weakly sequentially compact, and hence by the Eberlein–\v Smulian Theorem, weakly compact. We can use then \cite[Theorem 3.6.1]{GeoRNP} to conclude that $\mathcal U$ has the Radon--Nikod\'ym Property.
\end{proof}

\subsection{Strong minimizers and well-posedness}
This subsection is devoted to recall one of the classical concepts of well-posedness, the \textit{Tikhonov} one. In order to do so,  we first recall the definition of \textit{strong minimality}, see \cite[Definition 6.3.4]{VarTech}.
\begin{definition}
	Let $\mathcal F:\mathcal U\to\mathbb R$ be a functional and  $u^*\in\mathcal U$ a minimizer of $\mathcal F$. We say that $u^*$ is a strong minimizer if 
	\begin{align*}
		\mathcal F(u_n)\to\mathcal F(u^*)\,\,\,\,\text{implies}\,\,\,\, |u_n-u^*|_{L^1(X)^m}\to 0
	\end{align*}
for any sequence $\{u_{n}\}_{n\in\mathbb N}\subset\mathcal U$.
\end{definition}

Using \cref{cor:equiv}, we can easily characterize strong minimizers in terms of the bang-bang property. 

\begin{proposition}\label{strongmin}
	Let $\mathcal F:\mathcal U\to\mathbb R$ be a weakly sequentially continuous functional and  $u^*\in\mathcal U$. The following statements are equivalent. 
	\begin{enumerate}
		\item $u^*$ is a strong minimizer of $\mathcal F$.
		\item $u^*$ is a bang-bang strict minimizer of $\mathcal F$.
	\end{enumerate}
\end{proposition}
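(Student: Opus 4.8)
The plan is to prove both implications using the characterization of the bang-bang property in \Cref{cor:equiv}, together with the weak sequential compactness of $\mathcal U$ (\Cref{wcfs}) and the weak sequential continuity of $\mathcal F$.

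First I would show $(ii)\Rightarrow(i)$. Assume $u^*$ is a bang-bang strict minimizer of $\mathcal F$, and let $\{u_n\}_{n\in\mathbb N}\subset\mathcal U$ satisfy $\mathcal F(u_n)\to\mathcal F(u^*)$. Suppose, for contradiction, that $|u_n-u^*|_{L^1(X)^m}\not\to0$; then there is $\e>0$ and a subsequence (not relabeled) with $|u_n-u^*|_{L^1(X)^m}\ge\e$ for all $n$. By \Cref{wcfs} we may pass to a further subsequence converging weakly in $L^1(X)^m$ to some $\hat u\in\mathcal U$. Weak sequential continuity gives $\mathcal F(\hat u)=\lim\mathcal F(u_n)=\mathcal F(u^*)$, so $\hat u$ is also a minimizer; since $u^*$ is a \emph{strict} minimizer, $\hat u=u^*$. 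Thus the subsequence converges weakly to $u^*$, and since $u^*$ is bang-bang, \Cref{cor:equiv} forces $|u_n-u^*|_{L^1(X)^m}\to0$ along this subsequence, contradicting $|u_n-u^*|_{L^1(X)^m}\ge\e$. Hence $u^*$ is a strong minimizer.

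Next I would show $(i)\Rightarrow(ii)$. Assume $u^*$ is a strong minimizer of $\mathcal F$. To see it is strict, suppose $u\in\mathcal U$ satisfies $\mathcal F(u)=\mathcal F(u^*)$; applying the definition of strong minimizer to the constant sequence $u_n\equiv u$ yields $|u-u^*|_{L^1(X)^m}=0$, i.e., $u=u^*$, so $u^*$ is a strict (global) minimizer. To see it is bang-bang, take any sequence $\{u_n\}_{n\in\mathbb N}\subset\mathcal U$ with $u_n\rightharpoonup u^*$ weakly in $L^1(X)^m$; by weak sequential continuity $\mathcal F(u_n)\to\mathcal F(u^*)$, and the strong minimizer property then gives $|u_n-u^*|_{L^1(X)^m}\to0$. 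Since this holds for every such sequence, \Cref{cor:equiv} shows $u^*$ is bang-bang. This completes the proof.

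There is no serious obstacle here: the argument is essentially a repackaging of \Cref{cor:equiv} and a subsequence extraction. The only point requiring a little care is the contradiction step in $(ii)\Rightarrow(i)$, where one must extract the weakly convergent subsequence \emph{before} invoking strictness to identify its limit as $u^*$, and then apply the bang-bang characterization along that subsequence; a standard subsequence-of-a-subsequence remark closes the gap between convergence along a subsequence and convergence of the whole sequence.
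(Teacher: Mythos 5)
Your proposal is correct and follows essentially the same route as the paper: both directions rest on \Cref{cor:equiv}, the weak sequential compactness of $\mathcal U$, and the weak sequential continuity of $\mathcal F$, with strictness used to identify the weak limit of the extracted subsequence as $u^*$. The only cosmetic difference is that you organize the $(ii)\Rightarrow(i)$ direction as a proof by contradiction, whereas the paper runs the standard ``every subsequence has a further subsequence converging weakly to $u^*$'' argument directly; the content is the same.
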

\begin{proof}
	Suppose that $u^*$ is a strong minimizer of $\mathcal F$, and let $\{u_{n}\}_{n\in\mathbb N}\subset\mathcal U$ be a sequence converging weakly to $u^*$ in ${L^1(X)^m}$. Since $\mathcal F$ is weakly sequentially continuous, $\mathcal F(u_n)\to\mathcal F(u^*)$, and thus $u_n\to u^*$. We can use \cref{cor:equiv} to conclude that $u^*$ must be bang-bang. It is clear that strong minimizers are strict.
	
	Conversely, suppose that $u^*$ is a bang-bang strict minimizer, and  let $\{u_{n}\}_{n\in\mathbb N}\subset \mathcal U$ be a sequence such that $\mathcal F(u_n)\to\mathcal F(u^*)$. Let $\{u_{n_k}\}_{k\in\mathbb N}$ be a subsequence of $\{u_{n}\}_{n\in\mathbb N}$.  We can extract a subsequence $\{u_{n_{k_j}}\}_{j\in\mathbb N}$ of $\{u_{n_k}\}_{k\in\mathbb N}$  converging weakly in ${L^1(X)^m}$ to some $\hat u\in\mathcal U$. Since $\mathcal F(u_{n_{k_j}})\to\mathcal F(u^*)$, we obtain $\mathcal F(\hat u)=\mathcal F(u^*)$, but since $u^*$ is a strict minimizer, we must have $\hat u=u^*$. Since every subsequence of $\{u_n\}_{n\in\mathbb N}$ has further a subsequence that converges weakly to $u^*$, we conclude that $u_n \rightharpoonup u^*$ weakly in ${L^1(X)^m}$; but then, by  \cref{cor:equiv}, $u_{n}\to u^*$ strongly in ${L^1(X)^m}$.
\end{proof}

We now give the definition of well-posedness, see \cite[Section 1.7]{Penot2013}.
\begin{definition}
	Let $\mathcal F:\mathcal U\to\mathbb R$. The optimization problem 
	\begin{align*}
		\min_{u\in\mathcal U}\mathcal F(u)
	\end{align*}
is said to be well-posed if $\mathcal F$ has strong minimizer.
\end{definition}

We can use \cref{strongmin} to see how the well-posedness of problem (\ref{conset})--(\ref{cost}) is related to the bang-bang property.

\begin{corollary}\label{wpbb}
	Problem (\ref{conset})--(\ref{cost}) is well-posed if and only if it possesses a bang-bang strict global minimizer.
\end{corollary}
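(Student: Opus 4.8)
The plan is to derive \Cref{wpbb} directly from \Cref{strongmin} by taking $\mathcal F = \mathcal J$, after noting that the two notions of minimizer in play match up correctly. First I would recall that, by \cref{standingass}(iii), the objective functional $\mathcal J$ is weakly sequentially continuous, so $\mathcal J$ qualifies as an admissible $\mathcal F$ in \cref{strongmin}. The only subtlety is a vocabulary check: a \emph{strong minimizer} in the sense of the definition preceding \cref{strongmin} is not a priori required to be global, whereas \cref{wpbb} speaks of a \emph{strict global} minimizer. So the real content of the proof is to observe that any strong minimizer of $\mathcal J$ over the (bounded) feasible set $\mathcal U$ is automatically global, and conversely.

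Concretely, I would argue as follows. Suppose problem (\ref{conset})--(\ref{cost}) is well-posed, i.e.\ $\mathcal J$ has a strong minimizer $u^*$. By \cref{strongmin}, $u^*$ is a bang-bang strict minimizer. It remains to upgrade ``strict minimizer'' to ``strict global minimizer''. Take any $v\in\mathcal U\setminus\{u^*\}$ and set $u_t := u^* + t(v-u^*)$ for $t\in[0,1]$; by convexity of $\mathcal U$ (\cref{wcfs}) each $u_t\in\mathcal U$. As $t\to 0^+$ we have $u_t \rightharpoonup u^*$ weakly (indeed strongly) in ${L^1(X)^m}$, so by weak sequential continuity $\mathcal J(u_t)\to\mathcal J(u^*)$, whence by strongness $|u_t - u^*|_{{L^1(X)^m}} = t|v-u^*|_{{L^1(X)^m}}\to 0$ — no information there, so instead I use the strict-minimizer property with radius $\hat r_{u^*}>0$: choose $t$ small enough that $|u_t-u^*|_{{L^1(X)^m}}\le\hat r_{u^*}$, giving $\mathcal J(u^*)<\mathcal J(u_t)$. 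Since $\mathcal J$ is also weakly sequentially lower semicontinuous along minimizing sequences — more simply, since any global minimizer $\bar u$ of $\mathcal J$ over the weakly sequentially compact set $\mathcal U$ exists and $\mathcal J(u_t)\le \max(\mathcal J(u^*),\mathcal J(v))$ fails to pin things down — I instead observe directly: if $u^*$ were not global, there is $\bar u\in\mathcal U$ with $\mathcal J(\bar u)\le\mathcal J(u^*)$ and $\bar u\neq u^*$; then along $u_t$ joining $u^*$ to $\bar u$, weak sequential continuity gives $\mathcal J(u_t)\to\mathcal J(u^*)$, so strongness forces $u_t\to u^*$, which is automatic, but combined with $u_t\to u^*$ and the fact that each $u_t\neq u^*$ lies within $\hat r_{u^*}$ for small $t$, we get $\mathcal J(u_t)>\mathcal J(u^*)$; taking $t\to 1$ along the continuity of $\mathcal J$ on the segment yields $\mathcal J(\bar u)=\lim_{t\to1}\mathcal J(u_t)\ge\mathcal J(u^*)$, so in fact $\mathcal J(\bar u)=\mathcal J(u^*)$, and applying strongness to the constant-ish sequence still does not immediately contradict unless we note $\bar u\neq u^*$; a cleaner route is to apply \cref{strongmin} and the existence of a global minimizer to conclude that the (unique, by strictness) strong minimizer must coincide with the global one.

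For the converse, suppose problem (\ref{conset})--(\ref{cost}) has a bang-bang strict global minimizer $u^*$. A strict global minimizer is in particular a strict minimizer, so by \cref{strongmin} (applied with $\mathcal F=\mathcal J$) $u^*$ is a strong minimizer of $\mathcal J$; hence the problem is well-posed by definition. This direction is immediate.

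The main obstacle is purely the bookkeeping around ``strict minimizer'' versus ``strict global minimizer'': \cref{strongmin} is phrased with a plain (possibly local-looking) notion of minimizer, and one must make sure that when instantiated at a global object it returns a global conclusion, and vice versa. The cleanest way to close this gap — and the one I would adopt in the final write-up — is to note that $\mathcal J$ attains a global minimum on $\mathcal U$ (by \cref{wcfs} and weak sequential continuity), that \cref{strongmin} shows a strong minimizer is a strict minimizer hence \emph{unique as a minimizer of $\mathcal J$ over its own $\hat r$-ball}, and that a strong minimizer, being the weak-sequential limit of any minimizing sequence, must equal the global minimizer; therefore ``has a strong minimizer'' $\iff$ ``the global minimizer is strict and bang-bang'' $\iff$ ``has a bang-bang strict global minimizer'', which is exactly \cref{wpbb}.
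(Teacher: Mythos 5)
Your proposal is correct and takes the same route as the paper, which gives no separate proof and treats \cref{wpbb} as an immediate instantiation of \cref{strongmin} with $\mathcal F=\mathcal J$. The local-versus-global worry that occupies most of your write-up dissolves at once: the definition of strong minimizer already presupposes that $u^*$ is a (global) minimizer of $\mathcal F$ over all of $\mathcal U$, and the proof of \cref{strongmin} both uses and delivers \emph{global} strictness (its converse direction needs $\hat u = u^*$ for an arbitrary weak limit point $\hat u$, not one near $u^*$), so ``strict minimizer'' there already means ``strict global minimizer'' and no upgrading argument is required.
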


\subsection{Stegall's principle and the bang-bang property}
In nonlinear analysis and
optimization,  the notions of variational principle, perturbation and well-posedness are intrinsically related. The concept of genericity is of  fundamental importance in the interplay of these notions. In topology, a generic property is usually one that holds on a dense open set, however this can be a very strong requirement; for example, the irrational numbers are somehow generic among the real numbers, but they do not conform a open dense subset of the real line. 
 More generally, a generic property is one that holds on a residual set, being the dual concept a meager set. We recall now the definition of residuality in the particular case of the space ${L^\infty(X)^m}$.
\begin{definition}
	Let $\Theta$ be a subset of ${L^\infty(X)^m}$. We say that $\Theta$ is residual if there exists a countable family of open dense sets $\{D_n\}_{n\in\mathcal N}\subset {L^\infty(X)^m}$ such that 
	\begin{align*}
		\Theta=\bigcap_{n\in\mathbb N} D_n.
	\end{align*}
A set is said to be meager if it is the complement of a residual set.
\end{definition}
From Baire Category Theorem, it is clear that residual subsets of ${L^\infty(X)^m}$ are dense. We now give the definition of genericity, see \cite[Section 7]{Penot2013}. 
\begin{definition}
	A subset of ${L^\infty(X)^m}$ is said to be generic if it contains a residual set. A property is said to be generic if it holds on a generic set.
\end{definition}
We come now to a classic in variational analysis,  Stegall's  principle. This is a type of smooth variational principle over sets with the RNP.  It appeared first in \cite[pp. 174-176]{Stegall}, see \cite[Theorem 1.153]{Penot2013} or \cite[Theorem 6.3.5]{VarTech} for book references. We state now a version of the theorem, as a particular case of  \cite[Theorem 5]{Lassonde}.
\begin{theorem}\label{Stegallvp}
	The set 
	\begin{align*}
		\{\xi\in {L^\infty(X)^m}:\,\, \text{problem\, $\min_{u\in\mathcal U}\{\mathcal J(u)-\xi u\}$ is well-posed}\}
	\end{align*}
	is generic. In other words, the well-posedness of linearly perturbed versions  of problem (\ref{conset})--(\ref{cost}) is generic.
\end{theorem}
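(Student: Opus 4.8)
The plan is to recognize this statement as a direct application of Stegall's variational principle in the form of \cite[Theorem 5]{Lassonde}, so that the real work is to verify its hypotheses for the triple $({L^1(X)^m},\mathcal U,\mathcal J)$ and to match its conclusion with the notion of well-posedness introduced above. I would begin by recording the structural properties of the feasible set. Since $U$ is compact, every $u\in\mathcal U$ satisfies $|u(x)|\le R$ a.e.\ with $R:=\max_{v\in U}|v|$, and because $\mu$ is finite this gives $|u|_{{L^1(X)^m}}\le R\,\mu(X)$, so $\mathcal U$ is bounded; by \cref{wcfs} it is moreover nonempty, convex and weakly sequentially compact, hence norm closed, and by \cref{RNPconset} it has the Radon--Nikod\'ym property. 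Thus $\mathcal U$ is exactly the kind of domain to which Stegall's principle applies, with $({L^1(X)^m})^*={L^\infty(X)^m}$ (valid since the measure space is finite) playing the role of the dual.

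Next I would check the regularity of $\mathcal J$ required by the principle. Norm convergence in ${L^1(X)^m}$ implies weak convergence, so the weak sequential continuity of $\mathcal J$ forces $\mathcal J$ to be sequentially continuous for the norm topology; as ${L^1(X)^m}$ is metrizable this means $\mathcal J$ is norm continuous, in particular lower semicontinuous. Furthermore, given any sequence in $\mathcal U$, extracting a weakly convergent subsequence and using weak sequential continuity shows that $\mathcal J(\mathcal U)$ is sequentially compact in $\mathbb R$; hence $\mathcal J$ is bounded and attains its infimum on $\mathcal U$. These are the hypotheses on the perturbed functional (lower semicontinuity and boundedness from below) needed in the cited theorem.

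With these facts in hand I would invoke \cite[Theorem 5]{Lassonde}: for a bounded, closed, convex subset of a Banach space with the Radon--Nikod\'ym property and a lower semicontinuous, bounded-below functional on it, the set of continuous linear functionals $\xi$ for which $\mathcal J-\xi$ attains a \emph{strong} minimum on $\mathcal U$ is residual in the dual space, i.e.\ in ${L^\infty(X)^m}$. It then remains only to observe that "$\mathcal J-\xi$ attains a strong minimum on $\mathcal U$" is precisely the statement that $\min_{u\in\mathcal U}\{\mathcal J(u)-\xi u\}$ has a strong minimizer in the sense of the definition above, i.e.\ that this problem is well-posed; alternatively one may route this through \cref{strongmin}, noting that $\mathcal J-\xi$ is again weakly sequentially continuous. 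Since by definition every residual set is generic, the conclusion follows.

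I do not anticipate a deep obstacle, as the substance is contained in the cited principle; the only point requiring care is bookkeeping at the interface. One must make sure that the precise version of Stegall's theorem being cited has exactly the hypotheses verified above (boundedness, closedness, convexity and RNP of the domain; lower semicontinuity and lower boundedness of the functional) and that its notion of "strong minimum" coincides verbatim with the paper's "strong minimizer". If instead the cited version phrases its conclusion as a dense $G_\delta$ set rather than in the language of well-posedness, one simply unwinds the definitions of residual and generic to bridge the gap.
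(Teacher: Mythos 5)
Your proposal is correct and follows essentially the same route as the paper: establish that $\mathcal U$ has the Radon--Nikod\'ym property via \cref{RNPconset}, note that weak sequential continuity of $\mathcal J$ yields the (norm) continuity required, and then invoke \cite[Theorem 5]{Lassonde}. The additional bookkeeping you carry out (boundedness and closedness of $\mathcal U$, boundedness of $\mathcal J$, and the identification of ``strong minimum'' with the paper's notion of well-posedness) is exactly the verification the paper leaves implicit.
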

\begin{proof}
	By \cref{RNPconset}, the feasible set $\mathcal U$ has the RNP. Since the objective functional $\mathcal J$ is weakly sequentially continuous, it is in particular sequentially  continuous, and hence continuous. We can then employ  the implication $(v)\implies (iii)$ of \cite[Theorem 5]{Lassonde} to conclude the result.
\end{proof}

We are going now to reformulate the previous theorem in a way that makes transparent that the bang-bang property is generic. 

\begin{theorem}\label{bbg}
	The set
	\begin{align*}
	\left\lbrace \xi\in {L^\infty(X)^m}:\,\,\,	\mathcal J-\xi \,\,\,\text{has a bang-bang strict global minimizer}\right\rbrace 
	\end{align*}
	 is generic. In other words, the existence of bang-bang strict global minimizers of  linearly perturbed problems of problem (\ref{conset})--(\ref{cost}) is generic.
\end{theorem}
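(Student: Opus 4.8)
The plan is to combine the Stegall-type variational principle (\Cref{Stegallvp}) with the characterization of well-posedness via the bang-bang property (\Cref{wpbb}). Concretely, observe that for a fixed $\xi\in{L^\infty(X)^m}$, the perturbed problem $\min_{u\in\mathcal U}\{\mathcal J(u)-\xi u\}$ is exactly problem (\ref{conset})--(\ref{cost}) with objective functional $\mathcal F_\xi:=\mathcal J-\xi$. So I would first check that $\mathcal F_\xi$ satisfies the standing hypotheses needed: since $\mathcal J$ is weakly sequentially continuous and $u\mapsto\xi u$ is weakly sequentially continuous on ${L^1(X)^m}$ (as $\xi\in{L^\infty(X)^m}$ is a continuous linear functional on $L^1$), the difference $\mathcal F_\xi$ is weakly sequentially continuous. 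Hence \cref{wpbb}, applied with $\mathcal J$ replaced by $\mathcal F_\xi$, yields that the perturbed problem is well-posed if and only if $\mathcal F_\xi$ possesses a bang-bang strict global minimizer.

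Second, I would conclude by set equality. \Cref{wpbb} gives, for every $\xi\in{L^\infty(X)^m}$,
\begin{align*}
	\text{$\min_{u\in\mathcal U}\{\mathcal J(u)-\xi u\}$ is well-posed}\iff \text{$\mathcal J-\xi$ has a bang-bang strict global minimizer},
\end{align*}
so the two sets appearing in \Cref{Stegallvp} and \Cref{bbg} are literally the same set. Since the former is generic by \Cref{Stegallvp}, so is the latter, which is the desired conclusion.

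The argument is essentially a bookkeeping step, so there is no deep obstacle; the only point requiring a word of care is the verification that $\mathcal J-\xi$ inherits weak sequential continuity, which is needed both to invoke \cref{wpbb} (whose proof of the relevant direction, \cref{strongmin}, uses weak sequential continuity of the functional) and implicitly to know the perturbed problem still fits the abstract framework of \Cref{S2}. This is immediate from the definition of weak convergence in ${L^1(X)^m}$ together with $\xi\in{L^\infty(X)^m}=(L^1(X)^m)^*$. One might also remark that since a residual set is dense by Baire's theorem, the set in \Cref{bbg} is in particular dense in ${L^\infty(X)^m}$, which justifies the informal slogan that \emph{almost every} linearly perturbed problem has a bang-bang strict global minimizer.
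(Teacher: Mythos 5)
Your proposal is correct and follows essentially the same route as the paper: invoke \Cref{Stegallvp} to get a residual set of well-posed perturbed problems and then use the equivalence between strong minimizers and bang-bang strict minimizers (\cref{strongmin}, equivalently \cref{wpbb}) to identify these with the perturbations admitting a bang-bang strict global minimizer. Your explicit check that $\mathcal J-\xi$ remains weakly sequentially continuous is a point the paper leaves implicit, but the argument is the same.
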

\begin{proof}
	Let $\Lambda$ be the set of $\xi\in {L^\infty(X)^m}$ such that $\mathcal J-\xi$ has a bang-bang strict global minimizer.
	By  \cref{Stegallvp}, there exists a residual set $\Theta\subset {L^\infty(X)^m}$ such that
	problem $\min_{u\in\mathcal U}\{\mathcal J(u)-\xi u\}$ is well-posed for $\xi\in\Theta$. 
	By \cref{strongmin}, $\Theta$ is contained in $\Lambda$; thus $\Lambda$ is generic.
\end{proof}
\begin{corollary}\label{bbSteg}
	For every $\varepsilon>0$ there exists $\xi\in L^\infty(X)^m$  with $ |\xi|_{{L^\infty(X)^m}}\le\varepsilon$ such that $\mathcal J-\xi$ has a bang-bang strict global minimizer.
\end{corollary}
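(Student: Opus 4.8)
The plan is to deduce this immediately from Theorem \ref{bbg} together with the fact that residual (hence generic) subsets of ${L^\infty(X)^m}$ are dense. Let $\Lambda$ be the set of $\xi\in {L^\infty(X)^m}$ such that $\mathcal J-\xi$ has a bang-bang strict global minimizer. By Theorem \ref{bbg}, $\Lambda$ is generic, so it contains a residual set $\Theta$. By the Baire Category Theorem, $\Theta$ is dense in ${L^\infty(X)^m}$, and therefore so is $\Lambda$.

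Now fix $\varepsilon>0$. Since $0\in {L^\infty(X)^m}$ and $\Lambda$ is dense, the open ball $B(0,\varepsilon)=\{\xi\in {L^\infty(X)^m}:|\xi|_{{L^\infty(X)^m}}<\varepsilon\}$ intersects $\Lambda$. Pick any $\xi$ in this intersection: then $|\xi|_{{L^\infty(X)^m}}<\varepsilon\le\varepsilon$ and $\mathcal J-\xi$ has a bang-bang strict global minimizer, which is exactly the claim.

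There is essentially no obstacle here; the only thing to be careful about is that genericity as defined in the excerpt means "contains a residual set," and that residual sets are dense — but this was already noted right after the definition of residuality via the Baire Category Theorem, so we may invoke it directly. One could alternatively phrase the argument entirely in terms of Theorem \ref{Stegallvp} and \cref{strongmin}, but routing through Theorem \ref{bbg} is the shortest path.
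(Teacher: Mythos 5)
Your proposal is correct and follows the same route as the paper: invoke \cref{bbg} to get a residual set of admissible perturbations, then use the Baire Category Theorem to conclude that this set is dense, hence meets every ball around the origin. The extra detail you supply (explicitly intersecting with the open ball $B(0,\varepsilon)$) is just a spelled-out version of the paper's density argument.
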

\begin{proof}
	By \cref{bbg}, there exists a residual set $\Theta\subset {L^\infty(X)^m}$ such that for any $\xi\in\Theta$, $\mathcal J-\xi$ has a bang-bang strict global minimizer. The result follows from  Baire Category Theorem, as it implies that  residual sets of ${L^\infty(X)^m}$ are dense.
\end{proof}
\section{Stability under linear perturbations}\label{Sstab}
 We study the relation of linear perturbations to problem (\ref{conset})--(\ref{cost}) and the bang-bang property. We begin by describing the solution mappings that we consider.  We  give new characterizations of the bang-bang property in terms of hemicontinuity of these mappings. Finally, in the last subsection, we reformulate the stability properties  to make clearer their meaning.

\subsection{Solution mappings}
When studying stability of an optimization problem, it is often possible to prove that  perturbed problems have solutions in a neighborhood of a reference solution. This type of solutions can serve as a first approach for the stability analysis.

Given $\xi\in {L^\infty(X)^m}$, $u^*\in\mathcal U$ and $\gamma>0$, we  consider the optimization problem
\begin{align*}
	\mathcal P_{u^*,\gamma}(\xi):\quad\min_{}\left\lbrace \mathcal J(u)-\xi u:\,\,\,\,u\in\mathcal U\,\,\,\,\text{with}\,\,\,\,|u-u^*|_{{L^1(X)^m}}\le\gamma\right\rbrace.
\end{align*}
Before advancing further, let us mention that each problem has at least one solution.

\begin{proposition}\label{exisper}
	Each problem $\mathcal P_{u^*,\gamma}(\xi)$ has at least one minimizer, i.e, there exists $u=u_{u^*,\gamma,\xi}$ such that 
	\begin{align*}
		\mathcal J(u)-\xi u\le \mathcal J(w)-\xi w\quad\text{for all $w\in\mathcal U$ with $|w-u^*|_{L^1(X)^m}\le \gamma$}.
	\end{align*}
\end{proposition}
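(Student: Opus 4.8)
The plan is to apply the direct method of the calculus of variations. First I would identify the feasible set of $\mathcal P_{u^*,\gamma}(\xi)$ as
$\mathcal U_{\gamma} := \{u \in \mathcal U : |u - u^*|_{{L^1(X)^m}} \le \gamma\}$,
and observe that it is nonempty (it contains $u^*$) and weakly sequentially compact. Indeed, by \cref{wcfs} the set $\mathcal U$ is weakly sequentially compact, while the closed ball $\{u \in {L^1(X)^m} : |u - u^*|_{{L^1(X)^m}} \le \gamma\}$ is convex and norm-closed, hence weakly closed by Mazur's theorem; the intersection of a weakly sequentially compact set with a weakly closed set is again weakly sequentially compact.

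Next I would record that the perturbed objective $u \mapsto \mathcal J(u) - \xi u$ is weakly sequentially continuous on $\mathcal U$: the functional $\mathcal J$ is weakly sequentially continuous by \cref{standingass}, and since $\xi \in {L^\infty(X)^m}$ is, via the duality $({L^1(X)^m})^* = {L^\infty(X)^m}$, a bounded linear functional on ${L^1(X)^m}$, the map $u \mapsto \xi u$ is weakly continuous, in particular weakly sequentially continuous. Consequently $u \mapsto \mathcal J(u) - \xi u$ is a weakly sequentially continuous real-valued map on the weakly sequentially compact set $\mathcal U_{\gamma}$, hence it is bounded there and $\inf_{u \in \mathcal U_{\gamma}}\{\mathcal J(u) - \xi u\}$ is finite (otherwise a minimizing sequence would have a weakly convergent subsequence along which the objective tends to $-\infty$, contradicting weak sequential continuity).

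Finally I would run the standard argument: pick a minimizing sequence $\{u_n\}_{n\in\mathbb N} \subset \mathcal U_{\gamma}$, extract by weak sequential compactness a subsequence $\{u_{n_k}\}_{k\in\mathbb N}$ converging weakly in ${L^1(X)^m}$ to some $u \in \mathcal U_{\gamma}$, and conclude from weak sequential continuity that $\mathcal J(u) - \xi u = \lim_k (\mathcal J(u_{n_k}) - \xi u_{n_k}) = \inf_{w \in \mathcal U_{\gamma}}\{\mathcal J(w) - \xi w\}$. Thus $u$ minimizes $\mathcal P_{u^*,\gamma}(\xi)$, which is exactly the asserted $u = u_{u^*,\gamma,\xi}$.

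I do not anticipate a substantive obstacle. The only two points meriting a line of justification are the weak closedness of the $L^1$-ball (this is Mazur's lemma, relying on convexity and norm-closedness) and the finiteness of the infimum (which, as noted, follows since a weakly sequentially continuous function cannot be unbounded on a weakly sequentially compact set). Everything else is the routine extraction-and-pass-to-the-limit scheme.
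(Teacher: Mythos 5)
Your argument is correct and follows essentially the same route as the paper: both identify the constraint set as the intersection of the weakly sequentially compact set $\mathcal U$ with the convex, norm-closed (hence weakly closed) ball, note that $\mathcal J - \xi$ is weakly sequentially continuous, and conclude by the direct method. The only difference is that you spell out the routine details (finiteness of the infimum, extraction of the subsequence) that the paper leaves implicit.
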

\begin{proof}
	The set $\mathcal V:=\{u\in\mathcal U:\,\, |u-u^*|_{L^1(X)^m}\le\gamma\}$ is closed and convex, hence weakly closed. As by \cref{wcfs}, $\mathcal U$ is weakly sequentially compact, so is $\mathcal V$. Clearly, each map $\mathcal J-\xi:\mathcal V\to \mathbb R$ is weakly sequentially continuous; therefore, problem $\mathcal P_{u^*,\gamma}(\xi)$ must have at least one global minimizer.
\end{proof}

With each problem $\mathcal P_{u^*,\gamma}(\xi)$, we associate a \textit{localized solution mapping}. This is a  set-valued mapping, denoted by $\mathcal S_{u^*,\gamma}:{L^\infty(X)^m}\twoheadrightarrow L^1(X)^m$ and  given by
\begin{align*}
	\mathcal S_{u^*,\gamma}(\xi):=\left\lbrace u\in\mathcal U: \text{$u$ is a minimizer of problem $\mathcal P_{u^*,\gamma}(\xi)$} \right\rbrace.
\end{align*}

\begin{proposition}\label{nonemptysm}
	Each set-valued mapping $\mathcal S_{u^*,\gamma}$ takes nonempty closed values.
\end{proposition}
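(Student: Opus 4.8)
The nonemptiness of the values is precisely the content of \cref{exisper}, so the plan is to concentrate on closedness. Fix $\xi\in {L^\infty(X)^m}$ and abbreviate the feasible set of the localized problem by $\mathcal V:=\{u\in\mathcal U:\,|u-u^*|_{{L^1(X)^m}}\le\gamma\}$. I want to show that whenever $\{u_n\}_{n\in\mathbb N}\subset\mathcal S_{u^*,\gamma}(\xi)$ converges in ${L^1(X)^m}$ to some $u$, then $u\in\mathcal S_{u^*,\gamma}(\xi)$.

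First I would check that the limit stays feasible. The set $\mathcal U$ is convex by \cref{wcfs} and weakly sequentially closed, hence norm closed; alternatively, a subsequence of $\{u_n\}$ converges $\mu$-almost everywhere and $U$ is closed, so $u(x)\in U$ for a.e.\ $x\in X$. Since $u\mapsto|u-u^*|_{{L^1(X)^m}}$ is norm-continuous, the bound $|u_n-u^*|_{{L^1(X)^m}}\le\gamma$ passes to the limit, giving $u\in\mathcal V$.

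Next I would pass to the limit in the optimality inequality. For each $n\in\mathbb N$ and each $w\in\mathcal V$ one has $\mathcal J(u_n)-\xi u_n\le\mathcal J(w)-\xi w$. Strong convergence in ${L^1(X)^m}$ implies weak convergence, so by the weak sequential continuity of $\mathcal J$ (\cref{standingass}) we get $\mathcal J(u_n)\to\mathcal J(u)$; and since $\xi\in {L^\infty(X)^m}$, the estimate $|\xi u_n-\xi u|\le|\xi|_{{L^\infty(X)^m}}|u_n-u|_{{L^1(X)^m}}$ shows $\xi u_n\to\xi u$. Hence $\mathcal J(u)-\xi u\le\mathcal J(w)-\xi w$ for every $w\in\mathcal V$, i.e.\ $u$ minimizes $\mathcal P_{u^*,\gamma}(\xi)$, so $u\in\mathcal S_{u^*,\gamma}(\xi)$. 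This proves that each value is closed in ${L^1(X)^m}$.

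There is no serious obstacle here; the only point deserving care is that ``closed values'' is meant in the norm topology of ${L^1(X)^m}$, which is exactly the topology for which the continuity arguments above (weak sequential continuity of $\mathcal J$ combined with strong $\Rightarrow$ weak convergence, together with norm-continuity of the duality pairing with $\xi$) work cleanly. One neither needs nor should expect weak closedness of the values.
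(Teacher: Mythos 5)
Your proof is correct and follows exactly the route the paper takes: nonemptiness is delegated to \cref{exisper}, and closedness is obtained from the weak sequential continuity of $\mathcal J$ (plus the trivial continuity of $u\mapsto\xi u$) by passing to the limit in the optimality inequality. The paper's own proof states this in two sentences without the details; your write-up simply makes the same argument explicit.
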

\begin{proof}
	It follows from \cref{exisper} that each $S_{u^*,\gamma}$ takes nonempty values. It follows from the weak sequential continuity of the objective functional that each $\mathcal S_{u^*,\gamma}$ takes closed values.
\end{proof}
We can recover the usual solution mappings from the localized ones.
The \textit{local solution mapping}  $\mathcal S_{\text{loc}}: {L^\infty(X)^m}\to {L^1(X)^m}$ is given by
\begin{align*}
	\mathcal S_{\text{loc}}(\xi):=\left\lbrace u\in\mathcal U:\, u\in \mathcal S_{u,\gamma}(\xi) \,\,\,\,\text{for some $\gamma>0$}\right\rbrace.
\end{align*}
Each set $\mathcal S_{\text{loc}}(\xi)$  consists of the local minimizers of $\mathcal J-\xi$ on $\mathcal U$.
In the same fashion, we define the \textit{global solution mapping}  $\mathcal S_{\text{gbl}}: {L^\infty(X)^m}\to {L^1(X)^m}$ by 
\begin{align*}
	\mathcal S_{\text{gbl}}(\xi):=\left\lbrace u\in\mathcal U:\, u \in \mathcal S_{u,\gamma}(\xi) \,\,\,\,\text{for all $\gamma>0$}\right\rbrace.
\end{align*}
Similarly, each set $\mathcal S_{\text{gbl}}(\xi)$  consists of the global minimizers of $\mathcal J-\xi$ on $\mathcal U$.

\subsection{Hemicontinuity}
It is now time to study the continuity properties of the solution mappings described in the previous subsection. We do this by means of the notion of hemicontinuity; we use standard definitions, see \cite[Definition 17.2]{Infdim}.  The term semicontinuity is used sometimes instead of hemicontinuity; see e.g., \cite[Section 1.4.1]{Frankowska}.

We begin studying the lower hemicontinuity properties of the mappings in relation with the bang-bang property; in order to do so, we employ the following sequential characterization of lower hemicontinuity, see \cite[Theorem 17.21]{Infdim}.
\begin{proposition}
	Let $\mathcal S:L^\infty(X)^m\twoheadrightarrow L^1(X)^m$ be a set-valued mapping. The following statements are equivalent.
	\begin{enumerate}
		\item $\mathcal S$ is lower hemicontinuous at $0$.
		
		\item For every sequence  $\{\xi_n\}_{n\in\mathbb N}\subset L^\infty(X)^m$  converging to $0$ in $L^\infty(X)^m$ and every $u\in\mathcal S(0)$, there exists a  subsequence $\{\xi_{n_k}\}_{k\in\mathbb N}$ of $\{\xi_n\}_{n\in\mathbb N}$ and a sequence $\{u_k\}_{k\in\mathbb N}\subset L^1(X)^m$ such that
		\begin{align*}
		u_k\in \mathcal S(\xi_{n_k})\,\,\,\,\text{for all $k\in\mathbb N$}\quad\text{and}\quad	u_k\to u\,\,\,\text{in $L^1(X)^m$}.
		\end{align*}
	\end{enumerate}
\end{proposition}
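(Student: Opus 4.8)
The assertion is the sequential characterization of lower hemicontinuity specialised to the case where both the parameter space $L^\infty(X)^m$ and the image space $L^1(X)^m$ are metric (indeed normed) spaces; it is precisely \cite[Theorem 17.21]{Infdim} in this setting, so the plan is to transcribe that argument (one could alternatively merely cite it). The single fact used repeatedly is that the open norm balls form a neighbourhood basis at each point of a normed space. I would prove (i)~$\Rightarrow$~(ii) directly and (ii)~$\Rightarrow$~(i) by contraposition.

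For (i)~$\Rightarrow$~(ii): assume $\mathcal S$ is lower hemicontinuous at $0$ and fix a sequence $\xi_n\to 0$ in $L^\infty(X)^m$ together with $u\in\mathcal S(0)$. For each $k\in\mathbb N$, the open ball of radius $1/k$ about $u$ in $L^1(X)^m$ is an open set meeting $\mathcal S(0)$, so by the definition of lower hemicontinuity (\cite[Definition 17.2]{Infdim}) there is $\rho_k>0$ such that $\mathcal S(\xi)$ meets this ball whenever $|\xi|_{L^\infty(X)^m}<\rho_k$. Since $\xi_n\to 0$, I can choose a strictly increasing sequence $\{n_k\}_{k\in\mathbb N}$ with $|\xi_{n_k}|_{L^\infty(X)^m}<\rho_k$, and then pick $u_k\in\mathcal S(\xi_{n_k})$ with $|u_k-u|_{L^1(X)^m}<1/k$; consequently $u_k\to u$, which is (ii). Note that the nonemptiness of $\mathcal S(\xi_{n_k})$ needed here is supplied for free by the choice of $\rho_k$.

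For (ii)~$\Rightarrow$~(i): suppose $\mathcal S$ is not lower hemicontinuous at $0$. Then there is an open set $V\subset L^1(X)^m$ with $\mathcal S(0)\cap V\neq\emptyset$ such that, for every $n\in\mathbb N$, some $\xi_n$ with $|\xi_n|_{L^\infty(X)^m}<1/n$ satisfies $\mathcal S(\xi_n)\cap V=\emptyset$; in particular $\xi_n\to 0$. Fixing $u\in\mathcal S(0)\cap V$ and applying (ii) yields a subsequence $\{\xi_{n_k}\}_{k\in\mathbb N}$ and points $u_k\in\mathcal S(\xi_{n_k})$ with $u_k\to u$. Since $V$ is open and $u\in V$, we have $u_k\in V$ for all large $k$, contradicting $\mathcal S(\xi_{n_k})\cap V=\emptyset$. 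Hence (ii) implies (i), which closes the equivalence.

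I do not expect any genuine obstacle; the only points requiring care are using the precise definition of lower hemicontinuity from \cite[Definition 17.2]{Infdim} and the fact that both spaces are metrizable (so that the countable neighbourhood bases of balls may be used). If a shorter treatment is preferred, the entire proof reduces to the remark that this is \cite[Theorem 17.21]{Infdim}, applied to the metric spaces $L^\infty(X)^m$ and $L^1(X)^m$.
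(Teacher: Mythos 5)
Your proposal is correct; the paper itself offers no proof of this proposition and simply cites \cite[Theorem 17.21]{Infdim}, which is exactly the result you transcribe (and you note that citing it suffices). Your two implications are the standard argument and contain no gaps.
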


We are now ready for our first result. The proof consist of two main ingredients, the weak clustering principle (\cref{csequences}) and the construction of adequate perturbations. For the latter, we use the celebrated Hahn-Banach Theorem.
\begin{proposition}\label{imbang}
	Let $u^*\in\mathcal U$ be a strict local minimizer of problem (\ref{conset})--(\ref{cost}). Suppose that there exists $\gamma>0$ such that $\mathcal S_{u^*,\gamma}$ is lower hemicontinuous at $0$. Then $u^*$ is bang-bang.
\end{proposition}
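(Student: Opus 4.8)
The plan is to argue by contraposition: assuming $u^*$ is not bang-bang, I will use the weak clustering principle (\cref{csequences}) together with a suitable linear perturbation to violate the lower hemicontinuity of $\mathcal S_{u^*,\gamma}$ at $0$. First I would invoke \cref{csequences} to obtain $\delta_0>0$; shrinking if necessary I may assume $\delta_0 < \gamma$ and $\delta_0 < \hat r_{u^*}$. For a fixed $\delta\in(0,\delta_0]$ this yields a sequence $\{u_n\}_{n\in\mathbb N}\subset\mathcal U$ with $|u_n-u^*|_{{L^1(X)^m}}=\delta$ and $u_n\rightharpoonup u^*$ weakly in ${L^1(X)^m}$. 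Since $u^*$ is a strict local minimizer with radius larger than $\delta$, each $u_n\neq u^*$; by strict local minimality, $\mathcal J(u_n) > \mathcal J(u^*)$, and by weak sequential continuity of $\mathcal J$, $\mathcal J(u_n)\to\mathcal J(u^*)$.

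The key construction is a sequence of perturbations $\{\xi_n\}_{n\in\mathbb N}\subset{L^\infty(X)^m}$ with $\xi_n\to 0$ in ${L^\infty(X)^m}$ such that $u_n$ becomes the \emph{unique} minimizer of $\mathcal P_{u^*,\gamma}(\xi_n)$, i.e. $\mathcal S_{u^*,\gamma}(\xi_n)=\{u_n\}$. To do this I would separate $u_n$ from the weakly closed convex set $\mathcal V_n := \{u\in\mathcal U : |u-u^*|_{{L^1(X)^m}}\le\gamma,\ \mathcal J(u)-\xi_n u \le \mathcal J(u_n) - \xi_n u_n\}\setminus\{u_n\}$ — more precisely, since the ball of radius $\gamma$ around $u^*$ is weakly compact and convex and $u_n$ is an extreme-ish target, I use the Hahn--Banach separation theorem to pick $\eta_n\in{L^\infty(X)^m}$ with $|\eta_n|_{{L^\infty(X)^m}}=1$ strictly separating $u_n$ from the closed convex hull of $\{u\in\mathcal U : |u-u^*|_{{L^1(X)^m}}\le\gamma\}$ on the far side, so that $\eta_n u < \eta_n u_n$ for all feasible $u\neq u_n$; scaling $\xi_n := \rho_n \eta_n$ with $\rho_n\to 0$ chosen large enough relative to $1/n$ (using $\mathcal J(u_n)\to\mathcal J(u^*)$ and weak sequential continuity to control $\mathcal J(u)-\mathcal J(u_n)$ on the ball) forces $u_n$ to be the strict minimizer of $\mathcal J - \xi_n\cdot$ over the $\gamma$-ball, so $\mathcal S_{u^*,\gamma}(\xi_n)=\{u_n\}$.

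Then I apply the sequential characterization of lower hemicontinuity (the \cref{} preceding this statement) at the point $u^*\in\mathcal S_{u^*,\gamma}(0)$ with the sequence $\xi_n\to 0$: there must be a subsequence $\xi_{n_k}$ and $w_k\in\mathcal S_{u^*,\gamma}(\xi_{n_k})$ with $w_k\to u^*$ strongly in ${L^1(X)^m}$. But $\mathcal S_{u^*,\gamma}(\xi_{n_k})=\{u_{n_k}\}$, so $w_k = u_{n_k}$, which gives $|u_{n_k}-u^*|_{{L^1(X)^m}}\to 0$, contradicting $|u_n-u^*|_{{L^1(X)^m}}=\delta>0$ for all $n$. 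Hence $u^*$ must be bang-bang.

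The main obstacle is the perturbation construction in the second paragraph: making the separating functional genuinely \emph{strict} over the whole (weakly compact) feasible ball rather than just at a single point, while simultaneously ensuring the scaling $\rho_n\to 0$ is still large enough to dominate the variation of $\mathcal J$ — this coupling between "$\xi_n$ small" and "$\xi_n$ dominant on the objective" is the delicate part and is exactly where weak sequential continuity of $\mathcal J$ (giving uniform smallness of $\mathcal J(u)-\mathcal J(u_n)$ near $u^*$ along the clustering sequence) must be used carefully. A cleaner alternative, which I would try first, is to take $\xi_n$ in the direction of a subgradient-type functional attaining its max on the $\gamma$-ball exactly at $u_n$ via an exposing-functional argument, exploiting that $\mathcal U$ has the RNP (\cref{RNPconset}) so that slices of the feasible ball can be made arbitrarily small and $u_n$ can be realized as a strongly exposed point; this converts the separation step into a direct appeal to dentability and avoids hand-tuning $\rho_n$.
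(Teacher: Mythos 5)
Your overall strategy (contraposition via the weak clustering principle, then linear perturbations that defeat lower hemicontinuity) is the same as the paper's, and your first and last paragraphs are sound. The genuine gap is the middle step: the perturbations you describe cannot be constructed. You cannot ``strictly separate $u_n$ from the closed convex hull of $\{u\in\mathcal U:|u-u^*|_{L^1(X)^m}\le\gamma\}$'' because $u_n$ belongs to that convex set; what you are really demanding is that $u_n$ be an exposed point of it, and the elements produced by \cref{csequences} are in general not even extreme points of $\mathcal U$ (they live on segments $[\alpha(x),\beta(x)]$ by construction), let alone exposed points of the $\gamma$-ball. The RNP/dentability fallback does not repair this: dentability guarantees that \emph{some} points admit arbitrarily small slices, but it gives no control over \emph{which} points are (strongly) exposed, so you cannot realize a prescribed $u_n$ as the exposing target. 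Finally, even granting exposure, the scaling is contradictory: to make $u_n$ the unique minimizer of $\mathcal J-\xi_n$ over the whole $\gamma$-ball, $\xi_n$ must dominate the variation of $\mathcal J$ over that entire ball, which is of order one --- weak sequential continuity only makes $\mathcal J(u)-\mathcal J(u_n)$ small for $u$ weakly close to $u^*$, not uniformly over the ball --- and this is incompatible with $\rho_n\to0$.

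The paper's proof avoids forcing $\mathcal S_{u^*,\gamma}(\xi_n)=\{u_n\}$ altogether. It takes $\xi_n$ to be a Hahn--Banach norming functional of $u_n-u^*$, i.e.\ $\xi_n(u_n-u^*)=|\xi_n|_{L^\infty(X)^m}|u_n-u^*|_{L^1(X)^m}$, with the calibrated magnitude $|\xi_n|_{L^\infty(X)^m}=\frac{4}{\delta}|\mathcal J(u_n)-\mathcal J(u^*)|\to0$. Lower hemicontinuity then supplies minimizers $w_k\in\mathcal S_{u^*,\gamma}(\xi_{n_k})$ with $w_k\to u^*$; comparing the perturbed objective at $w_k$ with the feasible competitor $u_{n_k}$, and using $\mathcal J(u^*)\le\mathcal J(w_k)$ once $|w_k-u^*|_{L^1(X)^m}\le\delta/2$, yields
\begin{align*}
\frac{\delta}{4}|\xi_{n_k}|_{L^\infty(X)^m}=|\mathcal J(u^*)-\mathcal J(u_{n_k})|\ge\xi_{n_k}(u_{n_k}-u^*)+\xi_{n_k}(u^*-w_k)\ge\frac{\delta}{2}|\xi_{n_k}|_{L^\infty(X)^m},
\end{align*}
a contradiction since $|\xi_{n_k}|_{L^\infty(X)^m}>0$. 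If you want to keep your outline, replace the ``unique minimizer'' construction by this calibration argument.
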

\begin{proof}
	Suppose that $u^*$ is not bang-bang. By \cref{csequences}, there exists a positive number $\delta<\min\{\hat r_{u^*},\gamma\}$ and sequence $\{u_{n}\}_{n\in\mathbb N}\subset\mathcal U$ such that 
	\begin{align*}
		u_{n}\rightharpoonup u^*\,\,\,\,\text{weakly in $L^1(X)^m$}\quad\text{and}\quad\text{$|u_n-u^*|_{L^1(X)^m}=\delta$ for all $n\in\mathbb N$}.
	\end{align*}
	By the Hahn-Banach Theorem, for each $n\in\mathbb N$ there exists  $\xi_n\in L^\infty(X)^m$ such that
	\begin{align*}
		\xi_{n}(u_n-u^*)=|\xi_n|_{L^\infty(X)^m}|u_n-u^*|_{L^1(X)^m}\quad\text{and}\quad |\xi_{n}|_{L^\infty(X)^m}=\frac{4}{\delta}|\mathcal J(u_n)-\mathcal J(u^*)|
	\end{align*}
	for all $n\in\mathbb N$. Since $\delta<\hat r_{u^*}$, it follows that  $\mathcal J(u^*)<\mathcal J(u_n)$ for all $n\in\mathbb N$, and hence $|\xi_{n}|_{L^\infty(X)^m}>0$ for all $n\in\mathbb N$. Also, from the weak sequential continuity of the objective functional, it follows that $\xi_{n}\to 0$ in $L^\infty(X)^m$.

		Now, as $\mathcal S_{u^*,\gamma}$ is lower hemicontinuous at $0$ and $\mathcal S_{u^*,\gamma}(0)=\{u^*\}$, there exists a subsequence $\{\xi_{n_k}\}_{k\in\mathbb N}$ of $\{\xi_{n}\}_{n\in\mathbb N}$ and a  sequence $\{w_{k}\}_{k\in\mathbb N}$ converging to $u^*$ such that  $w_k\in\mathcal S_{u^*,\gamma}(\xi_{n_k})$ for all $k\in\mathbb N$. Then
	\begin{align}\label{ine1}
		\mathcal J(w_{k})-\xi_{n_k} w_k\le \mathcal J(u_{n_k})-\xi_{n_k} u_{n_k}\quad \text{for all $k\in\mathbb N$.}
	\end{align}
	Since $w_k\to u^*$ in $L^1(X)^m$, there exists $k_0\in\mathbb N$ such that $|w_k-u^*|_{L^1(X)^m}\le 2^{-1}\delta<\hat r_{u^*}$ for $k\ge k_0$, and hence $\mathcal J(u^*)\le \mathcal J(w_k)$ for $k\ge k_0$. Combing this with \cref{ine1}, we get
	\begin{align*}
		-|\mathcal J(u^*)-\mathcal J(u_{n_k})|=\mathcal J(u^*)-\mathcal J(u_{n_k})\le\mathcal J(w_{k})-\mathcal J(u_{n_k})\le \xi_{n_k}\big(w_k-u_{n_k}\big)
	\end{align*}
for all $k\ge k_0$.  Now, by construction of the sequence $\{\xi_n\}_{n\in\mathbb N}$, 
\begin{align*}
	\frac{\delta}{4}|\xi_{n_k}|_{L^\infty(X)^m}
	&=|\mathcal J(u^*)-\mathcal J(u_{n_k})|
	\ge
	\xi_{n_k}\big(u_{n_k}-w_{k}\big)\\
	&=\xi_{n_k}\big(u_{n_k}-u^*\big)+\xi_{n_k}\big(u^*-w_k\big)
	\\
	&\ge |\xi_{n_k}|_{L^\infty(X)^m}\big(\delta-|w_k-u^*|_{L^1(X)^m}\big)\ge \frac{\delta}{2}|\xi_{n_k}|_{L^\infty(X)^m}
\end{align*}
	for all $k\ge k_0$. This yields a contradiction.
\end{proof}

We proceed now to analyze the  upper hemicontinuity of localized solution mappings. The following sequential characterization will be of use, see \cite[Theorem 17.20]{Infdim}.

\begin{proposition}\label{uhc}
	Let $\mathcal S:L^\infty(X)^m\twoheadrightarrow L^1(X)^m$ be a set-valued mapping. The following statements are equivalent.
	\begin{enumerate}
		\item $\mathcal S$ is upper hemicontinuous at $0$ and $\mathcal S(0)$ is compact.
		
		\item If $\{\xi_{n}\}_{n\in\mathbb N}\subset L^\infty(X)^m$ and $\{u_n\}_{n\in\mathbb N}\subset L^1(X)^m$ are sequences such that
		\begin{align*}
			u_n\in \mathcal S(\xi_{n})\,\,\,\,\text{for all $n\in\mathbb N$}\quad\text{and}\quad	\xi_n\to 0\,\,\,\text{in $L^\infty(X)^m$},
		\end{align*}
		then the sequence  $\{u_n\}_{n\in\mathbb N}$ has a limit point in $\mathcal S(0)$.
	\end{enumerate}
\end{proposition}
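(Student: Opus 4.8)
The plan is to read this off from the standard sequential description of upper hemicontinuity for correspondences between metric spaces, namely \cite[Theorem 17.20]{Infdim}, which applies here because $L^\infty(X)^m$ and $L^1(X)^m$ are Banach spaces and hence metrizable; this is precisely the tool cited just above. For completeness, I indicate how the two implications would be obtained directly.

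For $(i)\implies(ii)$, I would argue by contradiction. Assume $\mathcal S$ is upper hemicontinuous at $0$, that $\mathcal S(0)$ is compact, that $\xi_n\to 0$ and $u_n\in\mathcal S(\xi_n)$ for all $n$, but that $\{u_n\}_{n\in\mathbb N}$ has no limit point in $\mathcal S(0)$. Then each $v\in\mathcal S(0)$ admits an open neighborhood $U_v$ and an index $N_v$ with $u_n\notin U_v$ for all $n\ge N_v$. Extracting a finite subcover $U_{v_1},\dots,U_{v_k}$ of the compact set $\mathcal S(0)$ and setting $V:=\bigcup_{i=1}^k U_{v_i}$ and $N:=\max_{1\le i\le k} N_{v_i}$, one gets an open set $V\supseteq\mathcal S(0)$ with $u_n\notin V$ for all $n\ge N$. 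On the other hand, upper hemicontinuity at $0$ provides $\e>0$ with $\mathcal S(\xi)\subseteq V$ whenever $|\xi|_{L^\infty(X)^m}<\e$; choosing $n\ge N$ with $|\xi_n|_{L^\infty(X)^m}<\e$ then yields $u_n\in\mathcal S(\xi_n)\subseteq V$, a contradiction. For $(ii)\implies(i)$: applying $(ii)$ with $\xi_n\equiv 0$ shows that every sequence in $\mathcal S(0)$ has a subsequence converging to a point of $\mathcal S(0)$, so $\mathcal S(0)$ is sequentially compact, hence compact; and if upper hemicontinuity failed at $0$, there would be an open set $V\supseteq\mathcal S(0)$ together with elements $\xi_n$ satisfying $|\xi_n|_{L^\infty(X)^m}<1/n$ and $u_n\in\mathcal S(\xi_n)\setminus V$, so that $\xi_n\to 0$ and, by $(ii)$, the sequence $\{u_n\}_{n\in\mathbb N}$ would have a limit point $u\in\mathcal S(0)\subseteq V$, which is impossible since $V$ is open while $u_n\notin V$ for every $n$.

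The argument is essentially bookkeeping, and I do not anticipate a genuine obstacle. The only points needing a little care are the metric-space characterization of ``$v$ is not a limit point of $\{u_n\}_{n\in\mathbb N}$'' used in the first implication and the passage between the topological and the sequential formulations of hemicontinuity; both are legitimate precisely because the ambient spaces are metrizable. Accordingly, I expect the proof to be short, and if one is content to cite the literature it reduces to invoking \cite[Theorem 17.20]{Infdim}.
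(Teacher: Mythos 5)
Your proposal is correct and matches the paper, which offers no proof of its own and simply cites \cite[Theorem 17.20]{Infdim} for this sequential characterization. The direct argument you supply (finite subcover of $\mathcal S(0)$ by neighborhoods avoiding the tail of $\{u_n\}_{n\in\mathbb N}$ for one direction, and the $\xi_n\equiv 0$ specialization plus a $1/n$-contradiction for the other) is a sound filling-in of that citation.
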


It turns out that the bang-bang property can imply upper hemicontinuity at a point when the  localized solution mappings are single valued at that point.

\begin{proposition}
	Let $u^*\in\mathcal U$ be a strict local minimizer of problem (\ref{conset})--(\ref{cost}) and $\gamma\in(0,\hat r_{u^*})$. If $u^*$ is bang-bang, then $\mathcal S_{u^*,\gamma}$ is upper hemicontinuous at $0$.
\end{proposition}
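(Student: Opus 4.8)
The plan is to use the sequential characterization of upper hemicontinuity from \cref{uhc}. Since $\gamma < \hat r_{u^*}$, the set $\mathcal S_{u^*,\gamma}(0)$ consists exactly of the minimizers of $\mathcal J$ on $\{u\in\mathcal U:|u-u^*|_{L^1(X)^m}\le\gamma\}$; because $u^*$ is a strict local minimizer with $\hat r_{u^*}>\gamma$, every such $u$ with $|u-u^*|_{L^1(X)^m}\le\gamma$ and $u\ne u^*$ satisfies $\mathcal J(u)>\mathcal J(u^*)$, so $\mathcal S_{u^*,\gamma}(0)=\{u^*\}$, which is certainly compact. So the first component of (i) in \cref{uhc} reduces to verifying the sequential condition (ii) with limit point $u^*$.

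So I would take sequences $\{\xi_n\}\subset L^\infty(X)^m$ with $\xi_n\to 0$ and $u_n\in\mathcal S_{u^*,\gamma}(\xi_n)$, and show $\{u_n\}$ has $u^*$ as a limit point. First I would pass to a subsequence (not relabeled) along which $u_n\rightharpoonup\hat u$ weakly in $L^1(X)^m$ for some $\hat u\in\mathcal U$, using that the constraint set $\{u\in\mathcal U:|u-u^*|_{L^1(X)^m}\le\gamma\}$ is convex, closed, hence weakly sequentially compact by \cref{wcfs}; note also $|\hat u-u^*|_{L^1(X)^m}\le\liminf|u_n-u^*|_{L^1(X)^m}\le\gamma$ by weak lower semicontinuity of the norm. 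Then I would show $\hat u=u^*$: for any fixed competitor $w\in\mathcal U$ with $|w-u^*|_{L^1(X)^m}\le\gamma$, optimality of $u_n$ gives $\mathcal J(u_n)-\xi_n u_n\le \mathcal J(w)-\xi_n w$; since $\xi_n\to0$ in $L^\infty$ and $u_n,w$ are bounded in $L^1$ (the values lie in the compact set $U$), the terms $\xi_n u_n$ and $\xi_n w$ go to $0$, and by weak sequential continuity of $\mathcal J$ we get $\mathcal J(\hat u)\le\mathcal J(w)$ in the limit; taking $w=u^*$ yields $\mathcal J(\hat u)\le\mathcal J(u^*)$, and combined with $|\hat u-u^*|_{L^1(X)^m}\le\gamma<\hat r_{u^*}$ and strictness, $\hat u=u^*$. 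Finally, since $u_n\rightharpoonup u^*$ weakly in $L^1(X)^m$ and $u^*$ is bang-bang, \cref{cor:equiv} gives $|u_n-u^*|_{L^1(X)^m}\to 0$, so $u^*$ is in fact the strong limit of this subsequence — hence a limit point of the original sequence, as required.

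The only mild obstacle is the bookkeeping around the subsequences: \cref{uhc}(ii) only asks for a limit point of $\{u_n\}$, so extracting a weakly convergent subsequence and upgrading it to a strongly convergent one via the bang-bang characterization is exactly what is needed; there is no need to argue that the whole sequence converges. I also need the harmless observation that $\mathcal U$ is bounded in $L^1(X)^m$ (since $U$ is compact and $\mu(X)<\infty$), which makes $\sup_{n}|\xi_n(u_n-w)|\le |\xi_n|_{L^\infty}(|u_n-u^*|_{L^1}+|w-u^*|_{L^1})\le 2\gamma|\xi_n|_{L^\infty}\to 0$. Everything else is routine.
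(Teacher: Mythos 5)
Your proposal is correct and follows essentially the same route as the paper's proof: pass to a weakly convergent subsequence, take the limit in the optimality inequality with competitor $u^*$ to get $\mathcal J(\hat u)\le\mathcal J(u^*)$, use $|\hat u-u^*|_{L^1(X)^m}\le\gamma<\hat r_{u^*}$ and strictness to conclude $\hat u=u^*$, and then upgrade weak to strong convergence via the bang-bang characterization. The extra bookkeeping you supply (compactness of $\mathcal S_{u^*,\gamma}(0)$, boundedness of $\mathcal U$ making the perturbation terms vanish) is just making explicit what the paper leaves implicit.
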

\begin{proof}
 Let $\{\xi_{n}\}_{n\in\mathbb N}\subset L^\infty(X)^m$ be a sequence such that $\xi_n\to 0$ in $L^\infty(X)^m$ and $\{u_n\}_{n\in\mathbb N}\subset \mathcal U$ a sequence such that $u_n\in\mathcal S_{u^*,\gamma}(\xi_n)$ for all $n\in\mathbb N$.
  Then 
\begin{align}\label{ine2}
	\mathcal J(u_n)-\xi_n u_n\le \mathcal J(u^*)-\xi_n u^*\quad\text{for all $n\in\mathbb N$.}
\end{align}
We can extract a subsequence $\{u_{n_k}\}_{k\in\mathbb N}$ of   $\{u_n\}_{n\in\mathbb N}$ converging weakly to some $\hat u\in\mathcal U$ in $L^1(X)^m$. 
 Taking limit in \cref{ine2}, we conclude that $\mathcal J(\hat u)\le \mathcal J(u^*)$.  As $\gamma<\hat r_{u^*}$ and 
 \begin{align*}
 	|\hat u-u^*|_{L^1(X)^m}\le\liminf_{k\to 0}|u_{n_k}-u^*|_{L^1(X)^m}\le \gamma,
 \end{align*}
  we conclude that $\hat u=u^*$. Hence,  $u_{n_k}\rightharpoonup u^*$ weakly in $L^1(X)^m$, and as $u^*$ is bang-bang,  $u_{n_k}\to u^*$ in $L^1(X)^m$; thus $\{u_n\}_{n\in\mathbb N}$ has a limit point in $\mathcal S_{u^*,\gamma}(0)=\{u^*\}$.
\end{proof}

We can now put together the two previous results in a single theorem characterizing the bang-bang property in terms of hemicontinuity.

\begin{theorem}\label{thmhemi}
	Let $u^*\in\mathcal U$ be a strict local minimizer of problem (\ref{conset})--(\ref{cost}) and $\gamma\in(0,\hat r_{u^*})$. The following statements are equivalent.
	\begin{enumerate}
		\item $\mathcal S_{u^*,\gamma}$ is upper hemicontinuous at $0$.
		
		\item $\mathcal S_{u^*,\gamma}$ is lower hemicontinuous at $0$.
		
		\item $u^*$ is bang-bang.
	\end{enumerate}
\end{theorem}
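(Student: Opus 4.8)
The plan is to close a cycle of implications $(iii) \Rightarrow (i)$, $(iii) \Rightarrow (ii)$, and then $(i) \Rightarrow (iii)$ together with $(ii) \Rightarrow (iii)$, all of which are already essentially established by the preceding three propositions. Concretely, the implication $(iii) \Rightarrow (ii)$ — that is, if $u^*$ is bang-bang then $\mathcal S_{u^*,\gamma}$ is lower hemicontinuous at $0$ — is exactly the content of the first proposition preceding the theorem (the one proved via the Hahn--Banach construction is its contrapositive, giving $\neg(iii)\Rightarrow\neg(\text{l.h.c.})$, hence l.h.c.\ $\Rightarrow$ bang-bang). Let me restate carefully which direction each cited result supplies.

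First I would invoke \Cref{imbang}: under the hypothesis that $u^*$ is a strict local minimizer and that $\mathcal S_{u^*,\gamma}$ is lower hemicontinuous at $0$, we obtain that $u^*$ is bang-bang. This is precisely $(ii) \Rightarrow (iii)$, and since $\gamma \in (0,\hat r_{u^*})$ the standing hypotheses of \Cref{imbang} are met. Next I would invoke the proposition stating that if $u^*$ is a bang-bang strict local minimizer and $\gamma \in (0,\hat r_{u^*})$, then $\mathcal S_{u^*,\gamma}$ is upper hemicontinuous at $0$; this gives $(iii) \Rightarrow (i)$. To get $(i) \Rightarrow (iii)$, observe that the proof of \Cref{imbang} does not actually use lower hemicontinuity in full: re-examining it, the perturbations $\xi_{n_k}$ there are used to force the existence of points $w_k \to u^*$ with $w_k \in \mathcal S_{u^*,\gamma}(\xi_{n_k})$. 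If instead $\mathcal S_{u^*,\gamma}$ is upper hemicontinuous at $0$ with $\mathcal S_{u^*,\gamma}(0) = \{u^*\}$ (which holds because $\gamma < \hat r_{u^*}$), then by \Cref{uhc} the solutions $w_k$ of the perturbed problems $\mathcal P_{u^*,\gamma}(\xi_{n_k})$ still must cluster at $u^*$; passing to a further subsequence we again obtain $w_k \to u^*$, and the same contradiction with the construction of $\{\xi_n\}$ closes the argument. This yields $(i) \Rightarrow (iii)$.

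It remains to supply $(iii) \Rightarrow (ii)$ to complete the cycle, and this is where a small amount of genuine work is needed rather than a pure citation. Assume $u^*$ is bang-bang. Let $\xi_n \to 0$ in $L^\infty(X)^m$; since $\mathcal S_{u^*,\gamma}(0) = \{u^*\}$, lower hemicontinuity requires producing $w_n \in \mathcal S_{u^*,\gamma}(\xi_n)$ (for a subsequence) with $w_n \to u^*$ in $L^1(X)^m$. Take any $w_n \in \mathcal S_{u^*,\gamma}(\xi_n)$, which exist by \Cref{exisper}. Extract a weakly convergent subsequence $w_{n_k} \rightharpoonup \hat u \in \mathcal U$ with $|\hat u - u^*|_{L^1(X)^m} \le \gamma$; from the minimality inequality $\mathcal J(w_{n_k}) - \xi_{n_k} w_{n_k} \le \mathcal J(u^*) - \xi_{n_k} u^*$, weak sequential continuity of $\mathcal J$, and $\xi_{n_k} \to 0$, we get $\mathcal J(\hat u) \le \mathcal J(u^*)$; since $\gamma < \hat r_{u^*}$ this forces $\hat u = u^*$. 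So $w_{n_k} \rightharpoonup u^*$ weakly, and since $u^*$ is bang-bang, \Cref{cor:equiv} upgrades this to $w_{n_k} \to u^*$ strongly in $L^1(X)^m$, which is exactly the lower hemicontinuity condition. (In fact this argument is identical to the proof of $(iii)\Rightarrow(i)$ via the upper-hemicontinuity proposition, so one may simply remark that the same subsequence extraction yields both.)

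The main obstacle — really the only one — is bookkeeping: making sure the role of $\gamma \in (0, \hat r_{u^*})$ is used to pin down $\mathcal S_{u^*,\gamma}(0) = \{u^*\}$ in each of the three sub-arguments, and checking that the contrapositive form of \Cref{imbang} is being applied correctly when deriving $(i) \Rightarrow (iii)$. No new analytic ingredient beyond \Cref{cor:equiv}, \Cref{csequences}, \Cref{exisper}, and the sequential characterizations in \Cref{uhc} is required; the theorem is a packaging of results already in hand.
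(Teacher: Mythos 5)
Your proof is correct, and the two outer implications $(ii)\Rightarrow(iii)$ (via \cref{imbang}) and $(iii)\Rightarrow(i)$ (via the preceding upper-hemicontinuity proposition) are cited exactly as in the paper. Where you diverge is in how the equivalence is closed. The paper completes a single three-step cycle by proving $(i)\Rightarrow(ii)$ directly: since $\gamma<\hat r_{u^*}$ forces $\mathcal S_{u^*,\gamma}(0)=\{u^*\}$ and the values are nonempty by \cref{exisper}, upper hemicontinuity applied to any selection $u_n\in\mathcal S_{u^*,\gamma}(\xi_n)$ immediately produces a subsequence converging to $u^*$, which is all the sequential characterization of lower hemicontinuity demands. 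You instead prove four implications: $(i)\Rightarrow(iii)$ by re-running the Hahn--Banach construction of \cref{imbang} with upper hemicontinuity substituted for lower hemicontinuity to manufacture the points $w_k\to u^*$, and then $(iii)\Rightarrow(ii)$ by repeating the weak-compactness-plus-strict-minimality-plus-bang-bang extraction (which, as you note, is verbatim the proof of $(iii)\Rightarrow(i)$). Both of your extra arguments are sound --- in particular you correctly observe that $\mathcal S_{u^*,\gamma}(0)=\{u^*\}$ is compact, so the implication from upper hemicontinuity to the limit-point property in \cref{uhc} applies --- but the route is redundant: the paper's $(i)\Rightarrow(ii)$ shortcut renders your $(i)\Rightarrow(iii)$ re-examination of \cref{imbang} unnecessary and saves repeating the clustering argument. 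What your version buys, if anything, is that $(i)\Leftrightarrow(iii)$ and $(ii)\Leftrightarrow(iii)$ are each established without passing through the third statement, but for an equivalence of three statements this is no real gain.
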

\begin{proof}
	The implication $(ii)\implies(iii)$ follows from \cref{imbang} and the implication  $(iii)\implies(i)$ follows from \cref{uhc}. We proceed then to prove the implication $(i)\implies(ii)$. We first observe that $\mathcal S_{u^*,\gamma}(0):=\{u^*\}$ since $\gamma<\hat r_{u^*}$. Let $\{\xi_n\}_{n\in\mathbb N}\subset L^\infty(X)^m$ be any sequence converging to zero in $L^\infty(X)^m$.  There exists a sequence $\{u_{n}\}_{n\in\mathbb N}\subset\mathcal U$ satisfying $u_n\in \mathcal S_{u^*,\gamma}(\xi_n)$ for all $n\in\mathbb N$. 
	As $\mathcal S_{u^*,\gamma}$ is upper hemicontinuous at $0$, $\{u_n\}_{n\in\mathbb N}$ has $u^*$ as limit point; hence there exists a subsequence $\{u_{n_k}\}_{k\in\mathbb N}$ of $\{u_n\}_{n\in\mathbb N}$ converging to $u^*$ in $L^1(X)^m$. We conclude that $\mathcal S_{u^*,\gamma}$ is lower hemicontinuous at $0$.

\end{proof}

We can also give a characterization in terms of the global solution mapping.

\begin{corollary}\label{corgbl}
	Let $u^*\in\mathcal U$ be a strict global minimizer of problem (\ref{conset})--(\ref{cost}). The following statements are equivalent.
	\begin{enumerate}
		\item $\mathcal S_{\text{gbl}}$ is upper hemicontinuous at $0$.
		
		\item $\mathcal S_{\text{gbl}}$ is lower hemicontinuous at $0$.
		
		\item $u^*$ is bang-bang.
	\end{enumerate}
\end{corollary}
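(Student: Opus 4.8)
The plan is to mirror the structure of \Cref{thmhemi}, but using the global solution mapping $\mathcal S_{\text{gbl}}$ instead of the localized mappings $\mathcal S_{u^*,\gamma}$, and exploiting the fact that a strict global minimizer $u^*$ satisfies $\hat r_{u^*}=+\infty$, so that $\mathcal S_{\text{gbl}}(0)=\{u^*\}$. First I would observe that, since $u^*$ is a strict global minimizer, for every $\gamma>0$ we have $\gamma<\hat r_{u^*}=+\infty$, and moreover $\mathcal S_{\text{gbl}}(0)=\{u^*\}$; in fact $\mathcal S_{\text{gbl}}(\xi)=\bigcap_{\gamma>0}\mathcal S_{u^*,\gamma}(\xi)$ whenever the global minimizers of $\mathcal J-\xi$ lie near $u^*$, but the cleaner route is to argue directly with sequences of global minimizers.

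For the implication $(iii)\implies(i)$, I would take sequences $\{\xi_n\}_{n\in\mathbb N}\subset L^\infty(X)^m$ with $\xi_n\to 0$ and $\{u_n\}_{n\in\mathbb N}\subset\mathcal U$ with $u_n\in\mathcal S_{\text{gbl}}(\xi_n)$, so that $\mathcal J(u_n)-\xi_n u_n\le\mathcal J(u)-\xi_n u$ for all $u\in\mathcal U$; in particular, testing with $u=u^*$ gives $\mathcal J(u_n)-\xi_n u_n\le\mathcal J(u^*)-\xi_n u^*$. Extract a weakly convergent subsequence $u_{n_k}\rightharpoonup\hat u\in\mathcal U$ (using \cref{wcfs}), pass to the limit using weak sequential continuity of $\mathcal J$ and $\xi_n\to0$ in $L^\infty(X)^m$ (so $\xi_n u_n\to 0$ since $\{u_n\}$ is bounded in $L^1$), obtaining $\mathcal J(\hat u)\le\mathcal J(u^*)$; strictness of the global minimizer forces $\hat u=u^*$. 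Then $u_{n_k}\rightharpoonup u^*$, and since $u^*$ is bang-bang, \cref{cor:equiv} gives $u_{n_k}\to u^*$ strongly; hence $\{u_n\}$ has the limit point $u^*\in\mathcal S_{\text{gbl}}(0)$, and by \cref{uhc} (noting $\mathcal S_{\text{gbl}}(0)=\{u^*\}$ is compact) $\mathcal S_{\text{gbl}}$ is upper hemicontinuous at $0$.

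For $(ii)\implies(iii)$, I would argue exactly as in \cref{imbang}: supposing $u^*$ is not bang-bang, invoke \cref{csequences} to get $\delta>0$ and a sequence $u_n\rightharpoonup u^*$ with $|u_n-u^*|_{L^1(X)^m}=\delta$, build perturbations $\xi_n\in L^\infty(X)^m$ via Hahn--Banach with $\xi_n(u_n-u^*)=|\xi_n|_{L^\infty(X)^m}|u_n-u^*|_{L^1(X)^m}$ and $|\xi_n|_{L^\infty(X)^m}=\tfrac{4}{\delta}|\mathcal J(u_n)-\mathcal J(u^*)|$, note $\xi_n\to0$ by weak continuity of $\mathcal J$, and use lower hemicontinuity of $\mathcal S_{\text{gbl}}$ at $0$ (with $\mathcal S_{\text{gbl}}(0)=\{u^*\}$) to extract $w_k\in\mathcal S_{\text{gbl}}(\xi_{n_k})$ with $w_k\to u^*$. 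Since $w_k$ is a global minimizer of $\mathcal J-\xi_{n_k}$ we have $\mathcal J(w_k)-\xi_{n_k}w_k\le\mathcal J(u_{n_k})-\xi_{n_k}u_{n_k}$, and since $u^*$ is a global minimizer $\mathcal J(u^*)\le\mathcal J(w_k)$; the same chain of estimates as in \cref{imbang} yields $\tfrac{\delta}{4}|\xi_{n_k}|_{L^\infty(X)^m}\ge\tfrac{\delta}{2}|\xi_{n_k}|_{L^\infty(X)^m}$ for large $k$, a contradiction. Finally, $(i)\implies(ii)$ follows verbatim as in \cref{thmhemi}: since $\mathcal S_{\text{gbl}}(0)=\{u^*\}$, upper hemicontinuity forces any measurable selection $u_n\in\mathcal S_{\text{gbl}}(\xi_n)$ along $\xi_n\to0$ to have $u^*$ as a limit point, which is precisely the sequential criterion for lower hemicontinuity at $0$. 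The only genuinely new point compared with \cref{thmhemi} is checking that global minimizers of the perturbed problems stay usable in these estimates, i.e.\ that the global comparison inequalities replace the localized ones without loss — which is immediate since a global minimizer beats every competitor in $\mathcal U$, in particular $u^*$ and the $u_n$; I expect this bookkeeping, rather than any deep difficulty, to be the main thing to get right.
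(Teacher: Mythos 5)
Your proof is correct, but it takes a longer road than the paper's. The paper disposes of the corollary in two lines: since $U$ is compact and $\mu$ is finite, $\mathcal U$ is bounded in $L^1(X)^m$, so one can choose a single $\gamma>0$ with $\mathcal U\subset\{u\in L^1(X)^m: |u-u^*|_{L^1(X)^m}\le\gamma\}$; then the ball constraint in $\mathcal P_{u^*,\gamma}(\xi)$ is vacuous, $\mathcal S_{\text{gbl}}(\xi)=\mathcal S_{u^*,\gamma}(\xi)$ for every $\xi\in L^\infty(X)^m$, and since $\hat r_{u^*}=+\infty$ the hypothesis $\gamma\in(0,\hat r_{u^*})$ of \cref{thmhemi} is automatic, so that theorem applies verbatim. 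You instead re-run all three implications of \cref{thmhemi} with global minimizers in place of localized ones; each step you describe is sound --- the comparison inequalities only improve when the competitor set is all of $\mathcal U$, and strict global minimality replaces the ``$|\hat u-u^*|_{L^1(X)^m}\le\gamma<\hat r_{u^*}$'' argument used to identify weak limit points with $u^*$ --- so the ``bookkeeping'' you flag as the main risk does resolve favorably. What the paper's reduction buys is that none of this needs to be re-checked: the global solution mapping literally \emph{is} a localized solution mapping for one large enough $\gamma$. Your aside that $\mathcal S_{\text{gbl}}(\xi)=\bigcap_{\gamma>0}\mathcal S_{u^*,\gamma}(\xi)$ under a proximity proviso points at the less useful direction of this observation; the useful one is the exact equality with a single sufficiently large $\gamma$, which needs no proviso at all.
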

\begin{proof}
	Choose $\gamma>0$ such that $\mathcal U\subset\{u\in L^1(X)^m:\,\, |u-u^*|_{L^1(X)^m}\le\gamma\}$. Then 
	\begin{align*}
		\mathcal S_{\text{gbl}}(\xi)=\mathcal S_{u^*,\gamma}(\xi)\quad\text{for all $\xi\in L^\infty(X)^m$}.
	\end{align*}
	Consequently, the result follows from \cref{thmhemi}.
\end{proof}

We close the subsection with the following result relating the bang-bang property to the upper hemicontinuity at zero of the local solution mapping.

\begin{corollary}
	Suppose that problem (\ref{conset})--(\ref{cost}) has unique local minimizer $u^*\in\mathcal U$. If $\mathcal S_{\text{loc}}$ is upper hemicontinuous at $0$, then $u^*$ is bang-bang.
\end{corollary}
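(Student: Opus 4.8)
The goal is to reduce this corollary to \Cref{imbang} (which handles the \emph{localized} solution mapping $\mathcal S_{u^*,\gamma}$). The plan is as follows. First I would observe that, since $u^*$ is the unique local minimizer of problem (\ref{conset})--(\ref{cost}), the set $\mathcal S_{\text{loc}}(0)$ equals $\{u^*\}$; in particular it is compact and single-valued. Next I would show that the upper hemicontinuity of $\mathcal S_{\text{loc}}$ at $0$ forces, for a suitable $\gamma>0$, the localized mapping $\mathcal S_{u^*,\gamma}$ to be upper hemicontinuous at $0$ as well. Once that is established, \Cref{thmhemi} (the implication $(i)\implies(iii)$) immediately gives that $u^*$ is bang-bang, provided one also checks $\gamma<\hat r_{u^*}$; alternatively \Cref{imbang} together with the implication $(i)\implies(ii)$ of \Cref{thmhemi} can be used.

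To carry out the middle step, I would use the sequential characterization of upper hemicontinuity (\cref{uhc}). Take any sequence $\{\xi_n\}_{n\in\mathbb N}\subset L^\infty(X)^m$ with $\xi_n\to 0$, and take $u_n\in\mathcal S_{u^*,\gamma}(\xi_n)$ for some fixed $\gamma\in(0,\hat r_{u^*})$ to be pinned down. I must produce a limit point of $\{u_n\}_{n\in\mathbb N}$ equal to $u^*$. The natural route: extract a weakly convergent subsequence $u_{n_k}\rightharpoonup\hat u\in\mathcal U$; passing to the limit in the inequality $\mathcal J(u_{n_k})-\xi_{n_k}u_{n_k}\le\mathcal J(u^*)-\xi_{n_k}u^*$ and using weak sequential continuity of $\mathcal J$ yields $\mathcal J(\hat u)\le\mathcal J(u^*)$. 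Since $|\hat u-u^*|_{L^1(X)^m}\le\liminf_k|u_{n_k}-u^*|_{L^1(X)^m}\le\gamma<\hat r_{u^*}$ and $u^*$ is a strict local minimizer, we get $\hat u=u^*$. So far this shows $\{u_n\}$ has $u^*$ as a \emph{weak} limit point; but that is exactly the setting of \Cref{imbang} — indeed, rerunning the argument of \Cref{imbang} (which only uses lower hemicontinuity of $\mathcal S_{u^*,\gamma}$, not of $\mathcal S_{\text{loc}}$) is cleaner. So in fact the honest plan is: \textbf{directly invoke \Cref{imbang}} after showing $\mathcal S_{u^*,\gamma}$ is lower hemicontinuous at $0$ for some $\gamma>0$, deduced from the lower-hemicontinuity-side of the hypothesis, or — more simply — use that $\mathcal S_{\text{loc}}$ upper hemicontinuous at the isolated point $u^*$ is equivalent (via \Cref{thmhemi}, whose proof of $(i)\implies(ii)$ is purely local) to lower hemicontinuity of $\mathcal S_{u^*,\gamma}$.

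The cleanest concrete write-up: fix $\gamma\in(0,\hat r_{u^*})$. It suffices, by \Cref{imbang}, to show $\mathcal S_{u^*,\gamma}$ is lower hemicontinuous at $0$. Using \cref{uhc} and the uniqueness of the local minimizer, $\mathcal S_{\text{loc}}(0)=\{u^*\}$ is compact, so the hypothesis gives: whenever $\xi_n\to 0$ and $u_n\in\mathcal S_{\text{loc}}(\xi_n)$, the sequence $\{u_n\}$ clusters at $u^*$. Given $\xi_n\to0$, pick $u_n\in\mathcal S_{u^*,\gamma}(\xi_n)$; these are local minimizers of $\mathcal J-\xi_n$ near $u^*$ hence (after the weak-limit argument above confines them to a neighborhood of $u^*$ strictly inside the $\gamma$-ball, for $n$ large) lie in $\mathcal S_{\text{loc}}(\xi_n)$. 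Therefore $u_n\to u^*$ along a subsequence, which is precisely lower hemicontinuity of $\mathcal S_{u^*,\gamma}$ at $0$ (its value at $0$ being $\{u^*\}$). The main obstacle is the bookkeeping in that middle claim — verifying that a minimizer of the $\gamma$-localized problem which is close to $u^*$ is genuinely a local minimizer of the unperturbed-ball-free problem, i.e. that the active localization constraint $|u-u^*|_{L^1}\le\gamma$ is inactive at $u_n$ for large $n$ — which is exactly the weak-convergence / strict-minimality estimate sketched above; once that is in hand the rest is a direct citation of \Cref{imbang}.
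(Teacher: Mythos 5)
There is a genuine gap in the middle step of your plan. You propose to take $u_n\in\mathcal S_{u^*,\gamma}(\xi_n)$ and argue that, for large $n$, the localization constraint $|u_n-u^*|_{L^1(X)^m}\le\gamma$ is inactive, so that $u_n\in\mathcal S_{\text{loc}}(\xi_n)$ and the hypothesis on $\mathcal S_{\text{loc}}$ can be applied. Your justification is the weak-limit argument: extract $u_{n_k}\rightharpoonup\hat u$, deduce $\mathcal J(\hat u)\le\mathcal J(u^*)$ and hence $\hat u=u^*$ by strict local minimality. But this only yields \emph{weak} convergence $u_{n_k}\rightharpoonup u^*$, and weak convergence in $L^1$ does not force $|u_{n_k}-u^*|_{L^1(X)^m}<\gamma$ eventually: the $L^1$-norm is not weakly continuous, and \cref{csequences} shows that when $u^*$ is \emph{not} bang-bang there are sequences converging weakly to $u^*$ that sit exactly on the sphere of radius $\gamma$. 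Ruling this out would require strong convergence, which is precisely what the bang-bang property (the conclusion you are trying to prove) would give you — so the argument is circular at exactly the point you flag as ``the main obstacle.'' A minimizer of the $\gamma$-localized problem with the ball constraint active simply need not be a local minimizer of $\mathcal J-\xi_n$ on $\mathcal U$, and nothing in the hypothesis controls it.

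The paper sidesteps this entirely by going through \emph{global} minimizers: since $u^*$ is the unique local (hence unique, strict, global) minimizer, $\mathcal S_{\text{loc}}(0)=\mathcal S_{\text{gbl}}(0)=\{u^*\}$; for any $\xi_n\to0$ one picks $u_n\in\mathcal S_{\text{gbl}}(\xi_n)$ (which is nonempty), and these are automatically local minimizers, so $u_n\in\mathcal S_{\text{loc}}(\xi_n)$ with no constraint-activity issue. Upper hemicontinuity of $\mathcal S_{\text{loc}}$ at $0$ together with compactness of $\{u^*\}$ then produces a subsequence converging \emph{strongly} to $u^*$ via \cref{uhc}, which establishes lower hemicontinuity of $\mathcal S_{\text{gbl}}$ at $0$, and \cref{corgbl} concludes that $u^*$ is bang-bang. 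If you want to keep your reduction to \cref{imbang}, the repair is the same: feed global minimizers (not $\gamma$-localized ones) into the hypothesis, obtain a strongly convergent subsequence, and only then observe that its tail lies in $\mathcal S_{u^*,\gamma}(\xi_{n_k})$.
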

\begin{proof}
	Observe that $\mathcal S_{\text{loc}}(0)=\mathcal S_{\text{gbl}}(0)=\{u^*\}$.
	Let $\{\xi_n\}_{n\in\mathbb N }\subset L^\infty(X)^m$ be sequence converging to zero in $L^\infty(X)^m$. There exists a sequence $\{u_{n}\}_{n\in\mathbb N}$  such that $u_n\in\mathcal S_{\text{gbl}}(\xi_n)$ for all $n\in\mathbb N$.  In particular, $u_n\in\mathcal S_{\text{loc}}(\xi_n)$ for all $n\in\mathbb N$. As $\mathcal S_{\text{loc}}$ is assumed to be upper hemicontinuous at $0$, it follows that $\{u_n\}_{n\in\mathbb N}$ has a subsequence $\{u_{n_{k}}\}_{k\in\mathbb N}$ such that $u_{n_k}\to u^*$ in $L^1(X)^m$. Thus, we conclude that $\mathcal S_{\text{gbl}}$ is lower hemicontinuous. Then, by \cref{corgbl}, $u^*$ must be  bang-bang.
\end{proof}

\subsection{Local stability of linear perturbations}

In the previous subsection, the stability properties of problem (\ref{conset})--(\ref{cost}) were studied in terms of hemicontinuity, whose sequential characterization involves subsequences and limit points.
In this subsection, we restate these results in  terms that make clearer their meaning.

\begin{definition}\label{locdef}
Let $u^*\in\mathcal U$ be a local minimizer of problem (\ref{conset})--(\ref{cost}). We say that problem (\ref{conset})--(\ref{cost}) is locally stable at $u^*$ if there exists $\gamma>0$ with the property that for every $\varepsilon>0$ there exists $\delta>0$ such that 
\begin{align}\label{Staprope}
	|\xi|_{{L^\infty(X)^m}}<\delta \qquad\text{implies}\qquad |u-u^*|_{{L^1(X)^m}}<\varepsilon
\end{align}
for any $u\in\mathcal S_{u^*,\gamma}(\xi)$ and any $\xi\in L^\infty(X)^m$.
We define the stability radius of $u^*$ as the positive number $\hat\gamma_{u^*}:=\sup\left\lbrace\gamma>0:\,\text{(\ref{Staprope}) holds}\right\rbrace$.
\end{definition}

The definition of local stability says that small perturbations in $L^\infty(X)^m$ should imply that all  solutions of localized perturbed  problems  be close to $u^*$ in $L^1(X)^m$. This agrees with the common understanding of stability.

We come now to a characterization of local stability in terms of the bang-bang property; the result follows  easily from the  hemicontinuity properties studied in the  previous subsection.

\begin{theorem}\label{lclystable}
	Let $u^*\in\mathcal U$ be a  local minimizer of problem (\ref{conset})--(\ref{cost}). The following statements are equivalent.
	\begin{enumerate}
		\item Problem (\ref{conset})--(\ref{cost}) is  locally stable at $u^*$.
		
		\item $u^*$ is a bang-bang strict local minimizer of problem (\ref{conset})--(\ref{cost}).
	\end{enumerate}
	Moreover, if  problem (\ref{conset})--(\ref{cost}) is  locally stable at $u^*$, then  $\hat\gamma_{u^*}=\hat r_{u^*}$. 
\end{theorem}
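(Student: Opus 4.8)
The plan is to recast the $\varepsilon$--$\delta$ condition (\ref{Staprope}) in the language of hemicontinuity and then invoke the results of the previous subsection. The first step I would take is to record the key bookkeeping equivalence: for a fixed $\gamma>0$, condition (\ref{Staprope}) holds if and only if $\mathcal S_{u^*,\gamma}(0)=\{u^*\}$ \emph{and} $\mathcal S_{u^*,\gamma}$ is upper hemicontinuous at $0$ in the sense of \cite[Definition 17.2]{Infdim}. For the forward direction I would test (\ref{Staprope}) with $\xi=0$: since $|\,0\,|_{{L^\infty(X)^m}}<\delta$ for every $\delta$, every element of the set $\mathcal S_{u^*,\gamma}(0)$, which is nonempty by \cref{nonemptysm}, lies in every $L^1$-ball around $u^*$, forcing $\mathcal S_{u^*,\gamma}(0)=\{u^*\}$; then, because the open balls centred at $u^*$ form a neighbourhood base of the singleton $\{u^*\}$, what remains of (\ref{Staprope}) is exactly upper hemicontinuity of $\mathcal S_{u^*,\gamma}$ at $0$. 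The converse uses the same neighbourhood-base remark. I would also note, for later use, that $\mathcal S_{u^*,\gamma}(0)=\{u^*\}$ says that $u^*$ is the unique minimizer of $\mathcal J$ over $\mathcal U\cap\{\,|u-u^*|_{{L^1(X)^m}}\le\gamma\,\}$, hence $u^*$ is a strict local minimizer with $\hat r_{u^*}\ge\gamma$.

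With this in hand, for the implication $(i)\Rightarrow(ii)$ I would argue as follows. Local stability provides some $\gamma>0$ satisfying (\ref{Staprope}); by the first step, $u^*$ is a strict local minimizer and $\mathcal S_{u^*,\gamma}(0)=\{u^*\}$. Taking an arbitrary sequence $\xi_n\to0$ in ${L^\infty(X)^m}$ and any $u_n\in\mathcal S_{u^*,\gamma}(\xi_n)$, condition (\ref{Staprope}) forces $u_n\to u^*$ in ${L^1(X)^m}$, so the sequential criterion gives that $\mathcal S_{u^*,\gamma}$ is lower hemicontinuous at $0$, and \cref{imbang} then yields that $u^*$ is bang-bang. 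I would route through \cref{imbang} rather than \cref{thmhemi} here precisely because the former imposes no restriction on $\gamma$, whereas (\ref{Staprope}) only gives $\gamma\le\hat r_{u^*}$, not the strict inequality that \cref{thmhemi} requires.

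For $(ii)\Rightarrow(i)$ together with the radius identity, I would fix an arbitrary $\gamma\in(0,\hat r_{u^*})$. Then $\mathcal S_{u^*,\gamma}(0)=\{u^*\}$, exactly as in the proof of \cref{thmhemi}, and the implication $(iii)\Rightarrow(i)$ of \cref{thmhemi} makes $\mathcal S_{u^*,\gamma}$ upper hemicontinuous at $0$; by the equivalence of the first step, (\ref{Staprope}) holds for this $\gamma$. This proves $(i)$ and also shows $\gamma\le\hat\gamma_{u^*}$ for every $\gamma<\hat r_{u^*}$, that is, $\hat r_{u^*}\le\hat\gamma_{u^*}$. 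Conversely, the first step shows that any $\gamma$ for which (\ref{Staprope}) holds satisfies $\gamma\le\hat r_{u^*}$, whence $\hat\gamma_{u^*}\le\hat r_{u^*}$; combining the two inequalities gives $\hat\gamma_{u^*}=\hat r_{u^*}$.

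I do not expect a genuine obstacle: all the substantive work is already contained in \cref{imbang} and \cref{thmhemi}. The one point that needs care is the bookkeeping of the first step — matching the quantifiers of (\ref{Staprope}) with the topological definition of upper hemicontinuity, and keeping straight which direction of the equivalence uses the strict inequality $\gamma<\hat r_{u^*}$ — but this becomes routine once the neighbourhood-base observation is in place.
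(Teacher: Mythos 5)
Your proof is correct, and it rests on the same two pillars as the paper's (the hemicontinuity results of the preceding subsection), but the routing differs in both directions in ways worth noting. For $(i)\Rightarrow(ii)$ the paper deduces upper hemicontinuity of $\mathcal S_{u^*,\gamma}$ for $\gamma\in(0,\hat\gamma_{u^*})$ and then concludes bang-bang via \cref{thmhemi}; you instead extract \emph{lower} hemicontinuity directly from (\ref{Staprope}) (using \cref{nonemptysm} to select $u_n\in\mathcal S_{u^*,\gamma}(\xi_n)$ and noting the whole sequence converges) and invoke \cref{imbang}. Your choice is deliberate and actually cleaner: \cref{imbang} places no restriction on $\gamma$, whereas the paper's route has to pass from the $\gamma$ furnished by local stability to a strictly smaller one, a step that requires a small additional argument (a minimizer over the smaller ball need not a priori be a minimizer over the larger one) which the paper glosses over with ``it is easy to see.'' For $(ii)\Rightarrow(i)$ and the radius identity, the paper argues by contradiction, assuming $\hat\gamma_{u^*}<\hat r_{u^*}$ and extracting a convergent subsequence from a violating sequence; you argue directly through your bookkeeping equivalence ``(\ref{Staprope}) for $\gamma$ $\iff$ $\mathcal S_{u^*,\gamma}(0)=\{u^*\}$ and upper hemicontinuity at $0$,'' obtaining $\gamma\le\hat\gamma_{u^*}$ for every $\gamma<\hat r_{u^*}$ and the reverse inequality from the same equivalence. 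The quantifier-matching in that equivalence is exactly the point that needs care, and you handle it correctly (the open balls about $u^*$ form a neighbourhood base of the singleton value, and testing with $\xi=0$ forces single-valuedness at $0$). Both proofs are of comparable length; yours isolates the topological bookkeeping once and reuses it, which makes the two directions and the identity $\hat\gamma_{u^*}=\hat r_{u^*}$ fall out more transparently.
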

\begin{proof}
	Suppose that problem (\ref{conset})--(\ref{cost}) is locally stable at $u^*$. It follows immediately from the  definition of local stability that $u^*$ is strict local minimizer and that $\hat\gamma_{u^*}\le \hat r_{u^*}$.  It is also easy to see that $\mathcal S_{u^*,\gamma}$ is upper hemicontinuous at $0$ for any $\gamma\in(0,\hat\gamma_{u^*})$, and hence that $u^*$ is bang-bang. 
	
	Suppose now that $u^*$ is a bang-bang strict local minimizer of problem (\ref{conset})--(\ref{cost}).  Suppose that $\hat \gamma_{u^*}<\hat r_{u^*}$, and let $\gamma\in(\hat\gamma_{u^*},\hat r_{u^*})$. Then there exist $\varepsilon>0$, a sequence  $\{\xi_{n}\}_{n\in\mathbb N}\subset L^\infty(X)^m$ converging to zero in $L^\infty(X)^m$ and a sequence  $\{u_{n}\}_{n\in\mathbb N}\subset\mathcal U$ such that 
	\begin{align*}
		 |u_n-u^*|_{L^1(X)^m} \ge\varepsilon\quad\text{and}\quad u_n\in\mathcal S_{u^*,\gamma}(\xi_n)\quad\text{for all $n\in\mathbb N$.}
	\end{align*}
	 By \cref{uhc}, $\mathcal S_{u^*,\gamma}$ is upper hemicontinuous at $0$. Consequently, $\{u_{n}\}_{n\in\mathbb N}$ must have a subsequence $\{u_{n_k}\}_{k\in\mathbb N}$ such that $u_{n_k}\to u^*$ in $L^1(X)^m$; this yields a contradiction. We conclude that problem (\ref{conset})--(\ref{cost}) is locally stable at $u^*$ and that $\hat\gamma_{u^*}=\hat r_{u^*}$. 
\end{proof}

We now pass to the global analysis. For the sake of clarity and transparency, we give a definition that  reflects the intuitive understanding of global stability.

\begin{definition}\label{locdefgbl}
	Let $u^*\in\mathcal S_{\text{gbl}}(0)$. We say that problem (\ref{conset})--(\ref{cost}) is globally stable at $u^*$ if for every $\varepsilon>0$ there exists $\delta>0$ such that 
	\begin{align*}
		|\xi|_{{L^\infty(X)^m}}<\delta \quad\text{implies}\quad |u-u^*|_{{L^1(X)^m}}<\varepsilon
	\end{align*}
	for any $\xi\in L^\infty(X)^m$ and  $u\in\mathcal S_{\text{gbl}}(\xi)$.
\end{definition}

From \cref{lclystable}, we can deduce immediately the following result.

\begin{corollary}
	Let $u^*\in\mathcal U$ be a global minimizer of problem (\ref{conset})--(\ref{cost}). The following statements are equivalent.
	\begin{enumerate}
		\item Problem (\ref{conset})--(\ref{cost}) is  globally stable at $u^*$.
		
		\item $u^*$ is a bang-bang strict global minimizer of problem (\ref{conset})--(\ref{cost}).
	\end{enumerate}
\end{corollary}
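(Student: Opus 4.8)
The plan is to deduce this corollary directly from \cref{lclystable} together with the identification of the global solution mapping with a suitably large localized solution mapping, exactly as was done in the proof of \cref{corgbl}. The key observation is that global stability in the sense of \cref{locdefgbl} is nothing but local stability (\cref{locdef}) in which the localization parameter $\gamma$ is taken large enough to be inactive.

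First I would fix $\gamma>0$ large enough that $\mathcal U\subset\{u\in L^1(X)^m:\,|u-u^*|_{L^1(X)^m}\le\gamma\}$, which is possible since $\mathcal U$ is bounded in $L^1(X)^m$ (being weakly sequentially compact by \cref{wcfs}). For such $\gamma$ the constraint $|u-u^*|_{L^1(X)^m}\le\gamma$ in the problem $\mathcal P_{u^*,\gamma}(\xi)$ is never active, so $\mathcal S_{u^*,\gamma}(\xi)=\mathcal S_{\text{gbl}}(\xi)$ for every $\xi\in L^\infty(X)^m$. Comparing \cref{locdef} with \cref{locdefgbl}, this means precisely that problem (\ref{conset})--(\ref{cost}) is globally stable at $u^*$ if and only if it is locally stable at $u^*$ with stability radius $\hat\gamma_{u^*}=+\infty$; in particular global stability at $u^*$ implies local stability at $u^*$.

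The implication $(i)\implies(ii)$ then follows: if the problem is globally stable at $u^*$, it is locally stable at $u^*$, so by \cref{lclystable} $u^*$ is a bang-bang strict local minimizer, and since $u^*$ is assumed to be a global minimizer it is in fact a strict global minimizer (strictness being a local notion, a strict local minimizer that is also a global minimizer is a strict global minimizer). For $(ii)\implies(i)$, if $u^*$ is a bang-bang strict global minimizer, then in particular it is a bang-bang strict local minimizer with $\hat r_{u^*}=+\infty$, so by \cref{lclystable} the problem is locally stable at $u^*$ with $\hat\gamma_{u^*}=\hat r_{u^*}=+\infty$; choosing $\gamma$ as above and using $\mathcal S_{\text{gbl}}=\mathcal S_{u^*,\gamma}$ translates this back into global stability at $u^*$ in the sense of \cref{locdefgbl}.

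I do not anticipate a genuine obstacle here, since every substantive ingredient is already contained in \cref{lclystable} and in the identity $\mathcal S_{\text{gbl}}=\mathcal S_{u^*,\gamma}$ for large $\gamma$ used in \cref{corgbl}. The only point requiring a word of care is the bookkeeping around strictness and the stability radius: one must note that \cref{lclystable} delivers a bang-bang \emph{strict local} minimizer and argue that combined with the global-minimizer hypothesis this yields a \emph{strict global} minimizer, and conversely that $\hat r_{u^*}=+\infty$ feeds back through $\hat\gamma_{u^*}=\hat r_{u^*}$ to make the $\gamma$ in \cref{locdef} arbitrarily large, hence compatible with the non-localized quantifier in \cref{locdefgbl}.
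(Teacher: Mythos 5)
Your overall strategy is exactly the paper's: choose $\gamma$ large enough that the ball constraint is inactive, so that $\mathcal S_{\text{gbl}}(\xi)=\mathcal S_{u^*,\gamma}(\xi)$ for all $\xi$, and then transfer everything through \cref{lclystable}. That identification and the translation between \cref{locdef} and \cref{locdefgbl} are correct and are all the paper itself records.

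There is, however, one incorrect justification in your implication $(i)\implies(ii)$. The parenthetical claim that ``a strict local minimizer that is also a global minimizer is a strict global minimizer'' is false in general: a functional can have two isolated global minimizers $u^*$ and $v$, in which case $u^*$ is a strict local minimizer and a global minimizer, yet $\mathcal J(v)=\mathcal J(u^*)$ with $v\neq u^*$, so $u^*$ is not a strict global minimizer. Strictness of a \emph{global} minimizer is not a local notion. Fortunately, the correct argument is already contained in your own setup: you observe that global stability at $u^*$ is local stability with $\hat\gamma_{u^*}=+\infty$, and the ``Moreover'' clause of \cref{lclystable} gives $\hat r_{u^*}=\hat\gamma_{u^*}=+\infty$, which is precisely the statement that $u^*$ is a strict global minimizer. (Alternatively, apply the definition of global stability with $\xi=0$: every $u\in\mathcal S_{\text{gbl}}(0)$ satisfies $|u-u^*|_{L^1(X)^m}<\varepsilon$ for every $\varepsilon>0$, hence $u=u^*$, so $u^*$ is the unique global minimizer.) With that one step repaired, the proof is complete and coincides with the paper's.
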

\begin{proof}
	Let $\gamma_0>0$ such that $\mathcal U\subset\{u\in L^1(X)^m:\,\, |u-u^*|_{L^1(X)^m}\le\gamma_0\}$. Then 
	\begin{align*}
		\mathcal S_{\text{gbl}}(\xi)=\mathcal S_{u^*,\gamma}(\xi)\quad\text{for all $\xi\in L^\infty(X)^m$ and all $\gamma\ge \gamma_0$}.
	\end{align*}
	The result follows then from  \cref{lclystable}.
\end{proof}
\section{Stability of the first-order necessary condition}\label{Ssubreg}

We study now the stability with respect to  perturbations of  the first-order necessary condition of problem (\ref{conset})--(\ref{cost}).  In general, the first-order necessary condition (the local Pontryagin principle for optimal control problems) can be written as an inclusion, the so-called optimality system. Thus the stability properties of the first-order necessary condition can be directly analyzed from this inclusion. We will employ a concept of stability based on  the so-called \textit{strong metric subregularity} property. The definition of this property was given first in \cite[Definition 5.1]{RSsubReg}. The recent paper \cite{Smsr} gives a good overview of the utility of this property and its role in  variational analysis and optimization. For book references, see \cite[Lecture 12]{LecVar} or  \cite[Section 3I]{Imsm}.
\subsection{The first-order necessary condition}
We recall briefly the fist order necessary conditions for problem (\ref{conset})--(\ref{cost}).
We will employ the classic notion of (first-order) Gateaux differentiability.
\begin{definition}\label{Gatdiff}
The objective functional ${\mathcal J}:\mathcal U\to\mathbb R$ is said to be  Gateaux differentiable if for every $u\in\mathcal U$ there exists $d\mathcal J(u)\in L^\infty(X)^m$ such that
	\begin{align*}
	d{\mathcal J}(u)v=\lim_{\varepsilon\to0^+}\frac{{\mathcal J}(u+\varepsilon v)-{\mathcal J}(u)}{\varepsilon}\quad\text{for all $v\in L^1(X)^m$ with $u+v\in\mathcal U$.}
	\end{align*}
\end{definition}
	The first-order necessary condition is well known, see, e.g.,  \cite[pp. 11-13]{LecVar}.
\begin{lemma}\label{Fonc}
	Suppose that $\mathcal J:\mathcal U\to\mathbb R$ is Gateaux differentiable. If $u^*\in\mathcal U$ is a local minimizer of problem (\ref{conset})--(\ref{cost}), then
	\begin{align}\label{Necconever0}
		d\mathcal J(u^*)(u-u^*)\ge0\quad\text{for all $u\in\mathcal U$.}
	\end{align}
\end{lemma}

In order to talk about the stability of the first-order necessary conditions, during this section, we will of course assume that the objective functional is Gateaux differentiable; and moreover, a  \textit{weak-strong} continuity property on the derivative. The following assumption is supposed to hold throughout the remainder of this section.
\begin{assumption}\label{A2}
	The following statements hold.
	\begin{enumerate}
		\item The objective functional  $\mathcal J:\mathcal U\to\mathbb R$ is Gateaux differentiable;
		
		\item the mapping $\mathcal Q:\mathcal U\to L^\infty(X)^m$ given by $\mathcal Q(u):=d\mathcal J(u)$ is weakly-strongly sequentially continuous,
			i.e.,
			\begin{align*}
				\text{$u_n \rightharpoonup u$ weakly in $L^1(X)^m$\,\,\,\, implies\,\,\,\,	$\mathcal Q(u_n) \to \mathcal Q(u)$ in $L^\infty(X)^m$}
			\end{align*}
			for any sequence $\{u_n\}_{n \in \mathbb N} \subset \mathcal U$ and any $u\in\mathcal U$.
	\end{enumerate}
\end{assumption}

In analogy with optimal control,  we write $\sigma_{u}:=\mathcal Q(u)$ for each  $u\in\mathcal U$. The mapping $\mathcal Q:\mathcal U\to L^\infty(X)^m$ in $(ii)$ of \cref{A2} is called the \textit{switching mapping}.

Let us recall that the normal cone to $\mathcal U$ at $u^* \in \mathcal U$ is given by 
\begin{align*}
	N_{\mathcal U}(u^*):=\left\lbrace \xi\in L^\infty(X)^m:\langle \xi,u-u^*\rangle\le0\quad\text{for all $u\in  \mathcal U$}\right\rbrace.
\end{align*}
 For $u^* \in L^1(X)^m \setminus \mathcal U$, we set $N_{\mathcal U}(u^*) = \emptyset$.
We can then rewrite the first-order necessary condition as the inclusion
\begin{align}\label{Necconever}
	0\in \sigma_{u}+N_{\mathcal U}(u).
\end{align}

The correspondence $\Phi:\mathcal U\twoheadrightarrow L^\infty(X)^m$  given by $\Phi(u):=\sigma_u+N_{\mathcal U}(u)$ is called \textit{the optimality mapping}.
We now give a definition concerned with inclusion (\ref{Necconever}).

\begin{definition}
	Let $u^* \in \mathcal U$ be given.
	\begin{enumerate}
		\item $u^*$ is said to be a critical point of problem (\ref{conset})--(\ref{cost}) if $0\in\Phi(u^*)$;
		
		\item  $u^*$ is said to be  a locally isolated critical point of problem (\ref{conset})--(\ref{cost}) if there exists $\delta>0$ such that 
		\begin{align}\label{iscrimin}
			\text{$0\in\Phi(u)$ implies $u=u^*$ for all $u\in\mathcal U$ with $|u-u^*|_{L^1(X)^m}\le\delta$}.
		\end{align}
	\end{enumerate}
The critical radius of $u^*$ is given by $	\check r_{u^*}:=\sup\left\lbrace \delta>0: \,\text{\cref{iscrimin} holds}\right\rbrace$.
\end{definition}

\subsection{Subregularity of the optimality mapping}
We are now going to study the stability of inclusion (\ref{Necconever}) under perturbations. We will employ the following definition based on the notion of \textit{strong metric subregularity}, see  \cite[Introduction]{Smsr}.
\begin{definition}
	Let $u^*$ be a critical point of problem (\ref{conset})--(\ref{cost}). We say that the optimality mapping $\Phi:\mathcal U\to L^\infty(X)^m$ is strongly subregular at $u^*$ if there exists $\kappa>0$ with the property that for every $\varepsilon>0$ there exists $\delta>0$ such that
	\begin{align}\label{Stapropefo}
	|\xi|_{L^\infty(X)^m }<\delta \quad\text{implies}\quad |u-u^*|_{{L^1(X)^m}}<\varepsilon
	\end{align}
	for any $u\in\mathcal U$ with $|u-u^*|_{{L^1(X)^m}}\le\kappa$ and any $\xi\in\Phi(u)$.
	We define the  subregularity radius of $u^*$ as $\hat\kappa_{u^*}:=\sup\left\lbrace\kappa>0:\text{ property (\ref{Stapropefo}) holds}\right\rbrace$.
\end{definition}

We state now a trivial consequence of the definition of subregularity. 
\begin{proposition}\label{genprop0}
	Let $u^*\in\mathcal U$. Then $\hat\kappa_{u^*}\le\check r_{u^*}$. In particular, if the optimality mapping is strongly subregular at $u^*$, then $u^*$ is a locally isolated critical point of problem (\ref{conset})--(\ref{cost}).
\end{proposition}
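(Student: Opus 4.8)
The plan is to show that whenever the strong subregularity estimate \eqref{Stapropefo} holds for a given radius $\kappa>0$, then the isolation property \eqref{iscrimin} holds with $\delta=\kappa$; taking the supremum over all admissible $\kappa$ then yields $\hat\kappa_{u^*}\le\check r_{u^*}$ directly, and the ``in particular'' clause follows at once.

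First I would fix $\kappa>0$ for which \eqref{Stapropefo} holds and take an arbitrary $u\in\mathcal U$ with $|u-u^*|_{L^1(X)^m}\le\kappa$ satisfying $0\in\Phi(u)$. The one observation driving the argument is that the zero perturbation $\xi:=0$ is always feasible in \eqref{Stapropefo}: since $u$ is a critical point we have $\xi\in\Phi(u)$, and trivially $|\xi|_{L^\infty(X)^m}=0<\delta$ for every $\delta>0$. Hence, for each $\varepsilon>0$, the corresponding $\delta$ from \eqref{Stapropefo} gives $|u-u^*|_{L^1(X)^m}<\varepsilon$. As $\varepsilon>0$ was arbitrary, this forces $u=u^*$, which is precisely \eqref{iscrimin} with $\delta=\kappa$; therefore $\check r_{u^*}\ge\kappa$.

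Taking the supremum over all $\kappa>0$ for which \eqref{Stapropefo} holds then gives $\check r_{u^*}\ge\hat\kappa_{u^*}$. For the final assertion, strong subregularity of $\Phi$ at $u^*$ means by definition that there is at least one $\kappa>0$ satisfying \eqref{Stapropefo}, so $\hat\kappa_{u^*}>0$; combined with the inequality just proved, $\check r_{u^*}\ge\hat\kappa_{u^*}>0$, i.e.\ \eqref{iscrimin} holds for some $\delta>0$, which is exactly the statement that $u^*$ is a locally isolated critical point of problem \eqref{conset}--\eqref{cost}.

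There is no genuine obstacle in this proof — the entire content is the elementary remark that the null perturbation is admissible in the subregularity estimate, which pins any nearby critical point to $u^*$; the only mild care needed is to match the (non-strict) ball inequalities in \eqref{Stapropefo} and \eqref{iscrimin} so that the same radius transfers verbatim.
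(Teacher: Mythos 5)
Your argument is correct and is exactly the ``trivial consequence of the definition'' that the paper has in mind (the paper omits the proof entirely): the null perturbation $\xi=0$ is admissible in \eqref{Stapropefo}, which pins any critical point in the $\kappa$-ball to $u^*$, so every radius admissible for \eqref{Stapropefo} is admissible for \eqref{iscrimin} and the suprema compare as claimed.
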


The following theorem states the necessity of the bang-bang property.

\begin{lemma}\label{lemma0}
	 Let $u^*\in\mathcal U$ be a local minimizer of problem (\ref{conset})--(\ref{cost}). If the optimality mapping is strongly subregular at $u^*$, then $u^*$ is bang-bang.
\end{lemma}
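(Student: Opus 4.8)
The plan is to argue by contradiction, showing that if $u^*$ is a local minimizer that is not bang-bang, then the optimality mapping $\Phi$ fails to be strongly subregular at $u^*$. The natural tool is the weak clustering principle, \Cref{csequences}: since $u^*$ is not bang-bang, there is $\delta_0>0$ such that for every $\delta\in(0,\delta_0]$ we can find a sequence $\{u_n\}_{n\in\mathbb N}\subset\mathcal U$ with $|u_n-u^*|_{L^1(X)^m}=\delta$ and $u_n\rightharpoonup u^*$ weakly in $L^1(X)^m$. The idea is that these $u_n$ \emph{almost} satisfy the (perturbed) first-order necessary condition, with a perturbation that tends to zero, yet stay a fixed distance $\delta$ away from $u^*$, contradicting strong subregularity. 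I expect to pick $\delta$ small enough that $\delta\le\min\{\delta_0,\hat\kappa_{u^*}\}$, so that any candidate $u_n$ lies in the ball where the subregularity estimate is supposed to hold.

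The key step is to exhibit, for each $n$, a perturbation $\xi_n\in\Phi(u_n)$ with $|\xi_n|_{L^\infty(X)^m}\to 0$. Write $\sigma_{u_n}=\mathcal Q(u_n)=d\mathcal J(u_n)$. By \cref{A2}(ii), the switching mapping is weakly-strongly sequentially continuous, so $\sigma_{u_n}\to\sigma_{u^*}$ in $L^\infty(X)^m$. Since $u^*$ is a local minimizer, \cref{Fonc} gives $0\in\sigma_{u^*}+N_{\mathcal U}(u^*)$, i.e.\ $-\sigma_{u^*}\in N_{\mathcal U}(u^*)$. The candidate perturbation at $u_n$ should be built so that $\xi_n - \sigma_{u_n} \in N_{\mathcal U}(u_n)$, i.e.\ $\xi_n = \sigma_{u_n} + \eta_n$ for some $\eta_n\in N_{\mathcal U}(u_n)$, and we want $\xi_n\to 0$. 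The clean choice is to take $\eta_n$ to be (a measurable selection realizing) the normal-cone element that makes $u_n$ exactly critical for the perturbed problem; concretely, since each localized problem $\mathcal P_{u^*,\gamma}(\cdot)$ has minimizers, one constructs $\xi_n$ so that $u_n$ is a critical point of $\mathcal J-\xi_n$. The simplest realization: set $\xi_n := \sigma_{u_n} - \sigma_{u^*}$ and use that $-\sigma_{u^*}\in N_{\mathcal U}(u^*)$; one then needs that $-\sigma_{u^*}\in N_{\mathcal U}(u_n)$ as well so that $\xi_n - \sigma_{u_n} = -\sigma_{u^*}\in N_{\mathcal U}(u_n)$, giving $\xi_n\in\sigma_{u_n}+N_{\mathcal U}(u_n)=\Phi(u_n)$. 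This membership is exactly where care is needed, and it is the main obstacle.

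\textbf{The main obstacle} is arranging $\xi_n\in\Phi(u_n)$ while keeping $|\xi_n|_{L^\infty}\to 0$: the normal cones $N_{\mathcal U}(u_n)$ and $N_{\mathcal U}(u^*)$ are generally different, so $-\sigma_{u^*}$ need not lie in $N_{\mathcal U}(u_n)$. The remedy, I expect, is to exploit the freedom in \cref{csequences} to choose the clustering sequence $u_n$ adapted to $\sigma_{u^*}$: the construction in \cref{Clustering} perturbs $u^*$ only on a set where $u^*$ takes non-extreme values, and there one can move $u_n$ in directions that are tangential to $\mathcal U$ with respect to $-\sigma_{u^*}$ (i.e.\ along faces where $\langle -\sigma_{u^*}, \cdot\rangle$ is constant), so that $\langle -\sigma_{u^*}, u - u_n\rangle \le 0$ continues to hold for all $u\in\mathcal U$, hence $-\sigma_{u^*}\in N_{\mathcal U}(u_n)$. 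Granting this, $\xi_n := \sigma_{u_n}-\sigma_{u^*}\in\Phi(u_n)$, and $|\xi_n|_{L^\infty(X)^m} = |\sigma_{u_n}-\sigma_{u^*}|_{L^\infty(X)^m}\to 0$ by \cref{A2}(ii), while $|u_n-u^*|_{L^1(X)^m}=\delta\not\to 0$ with $\delta\le\hat\kappa_{u^*}$. This contradicts \cref{Stapropefo} (taking $\varepsilon<\delta$), and the contradiction forces $u^*$ to be bang-bang. If the tangential-perturbation refinement is unavailable, the fallback is to replace $\xi_n=\sigma_{u_n}-\sigma_{u^*}$ by $\xi_n=\sigma_{u_n}-\zeta_n$ where $\zeta_n$ is the metric projection (in a suitable pairing) of $\sigma_{u_n}$ onto $-N_{\mathcal U}(u_n)$, and show $|\sigma_{u_n}-\zeta_n|_{L^\infty}\to 0$ using that $\operatorname{dist}(-\sigma_{u^*},N_{\mathcal U}(u_n))\to 0$ as $u_n\rightharpoonup u^*$; this requires a semicontinuity property of the normal-cone map, which the construction in the Appendix should supply.
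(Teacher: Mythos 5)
Your route is genuinely different from the paper's. The paper disposes of this lemma in two lines: strong subregularity of $\Phi$ at $u^*$ implies local stability in the sense of \cref{locdef} (any $u\in\mathcal S_{u^*,\gamma}(\xi)$ with $\gamma$ below the subregularity radius satisfies $\xi\in\Phi(u)$, at least when $u$ lies strictly inside the localization ball), and local stability is already known to force the bang-bang property by \cref{lclystable}. You instead build the contradiction directly from the clustering sequence, which is a legitimate and in some ways more self-contained argument -- it bypasses the hemicontinuity machinery of \cref{Sstab} entirely. The parts you do carry out are correct: $\xi_n:=\sigma_{u_n}-\sigma_{u^*}\to 0$ in $L^\infty(X)^m$ by \cref{A2}, the membership $\xi_n\in\Phi(u_n)$ reduces exactly to $-\sigma_{u^*}\in N_{\mathcal U}(u_n)$, and with $|u_n-u^*|_{L^1(X)^m}=\delta\le\min\{\delta_0,\kappa\}$ fixed this contradicts \eqref{Stapropefo}.

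The gap is the one you flag yourself: \cref{csequences} as stated gives you no control over the normal cones $N_{\mathcal U}(u_n)$, so the membership $-\sigma_{u^*}\in N_{\mathcal U}(u_n)$ does not follow from what you have established, and your fallback via semicontinuity of the normal-cone map is not supplied by the paper and would be delicate. The clean way to close this is to apply \cref{Clustering} not to the constant multifunction $U$ but to the exposed-face multifunction $F(x):=\argmin_{v\in U}\sigma_{u^*}(x)\cdot v$. Since $u^*$ is a local minimizer, \cref{Fonc} gives $u^*(x)\in F(x)$ a.e.; since $F(x)$ is a face of $U$ one has $\ext F(x)=F(x)\cap\ext U$, so on the positive-measure set where $u^*(x)\notin\ext U$ one also has $u^*(x)\notin\ext F(x)$, and $F$ is measurable with nonempty compact convex values. \cref{Clustering} then produces $u_n\rightharpoonup u^*$ with $|u_n-u^*|_{L^1(X)^m}=\delta$ and $u_n(x)\in F(x)$ a.e., which yields $\sigma_{u^*}(x)\cdot(v-u_n(x))\ge 0$ for all $v\in U$ and a.e.\ $x$, hence $-\sigma_{u^*}\in N_{\mathcal U}(u_n)$ after integration. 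With that insertion your argument is complete; without it, the central membership step remains unproven.
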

\begin{proof}
The subregularity of the optimality mapping at $u^*$ clearly implies that problem (\ref{conset})--(\ref{cost}) is locally stable at $u^*$. By \cref{lclystable}, $u^*$ must be bang-bang.
\end{proof}

We arrive now to the main result of this section. 
\begin{theorem}\label{subregthm}
	Let $u^*\in\mathcal U$ be a local minimizer of problem (\ref{conset})--(\ref{cost}). The following statements are equivalent.
	\begin{enumerate}
		\item The optimality mapping is strongly subregular at $u^*$.
		
		\item  $u^*$ is a bang-bang locally isolated critical point of problem (\ref{conset})--(\ref{cost}).
	\end{enumerate}
Moreover, if the optimality mapping is strongly subregular at $u^*$, then $\hat\kappa_{u^*}=\check r_{u^*}$.
\end{theorem}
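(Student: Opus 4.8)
The plan is to reduce the statement to results already established, the only genuinely new ingredient being a weak-compactness/limiting argument for the reverse implication. The implication $(i)\implies(ii)$ is immediate: if $\Phi$ is strongly subregular at $u^*$, then \cref{lemma0} shows that $u^*$ is bang-bang, and \cref{genprop0} shows both that $u^*$ is a locally isolated critical point and that $\hat\kappa_{u^*}\le\check r_{u^*}$.

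For the implication $(ii)\implies(i)$ I would in fact prove the sharper claim that $(ii)$ implies $\hat\kappa_{u^*}=\check r_{u^*}$: since $\check r_{u^*}>0$ for a locally isolated critical point, this yields strong subregularity, and combined with $(i)\implies(ii)$ it also settles the ``moreover'' part. As \cref{genprop0} already gives $\hat\kappa_{u^*}\le\check r_{u^*}$, only the inequality $\hat\kappa_{u^*}\ge\check r_{u^*}$ remains. Fix an arbitrary $\kappa\in(0,\check r_{u^*})$ and assume, towards a contradiction, that property \cref{Stapropefo} fails for this $\kappa$: then there exist $\varepsilon>0$, a sequence $\{\xi_n\}_{n\in\mathbb N}\subset L^\infty(X)^m$ with $\xi_n\to0$, and $\{u_n\}_{n\in\mathbb N}\subset\mathcal U$ with $|u_n-u^*|_{L^1(X)^m}\le\kappa$ and $\xi_n\in\Phi(u_n)$ for every $n$, yet $|u_n-u^*|_{L^1(X)^m}\ge\varepsilon$. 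By \cref{wcfs} I pass to a subsequence (not relabelled) with $u_n\rightharpoonup\hat u$ weakly in $L^1(X)^m$ for some $\hat u\in\mathcal U$, and then $|\hat u-u^*|_{L^1(X)^m}\le\liminf_{n\to\infty}|u_n-u^*|_{L^1(X)^m}\le\kappa<\check r_{u^*}$.

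The key step is to show that $0\in\Phi(\hat u)$. Writing $\xi_n\in\sigma_{u_n}+N_{\mathcal U}(u_n)$ as
\begin{align*}
\langle\xi_n-\sigma_{u_n},\,u-u_n\rangle\le0\qquad\text{for all }u\in\mathcal U,
\end{align*}
I would let $n\to\infty$ with $u$ fixed. By $(ii)$ of \cref{A2}, $\sigma_{u_n}=\mathcal Q(u_n)\to\mathcal Q(\hat u)=\sigma_{\hat u}$ strongly in $L^\infty(X)^m$, so $\xi_n-\sigma_{u_n}\to-\sigma_{\hat u}$ strongly in $L^\infty(X)^m$, while $u-u_n\rightharpoonup u-\hat u$ weakly in $L^1(X)^m$; using the elementary fact that the duality pairing of an $L^\infty$-strongly convergent sequence with a bounded weakly-$L^1$-convergent sequence passes to the limit, I obtain $\langle-\sigma_{\hat u},\,u-\hat u\rangle\le0$ for every $u\in\mathcal U$, i.e. $0\in\sigma_{\hat u}+N_{\mathcal U}(\hat u)=\Phi(\hat u)$. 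Since $|\hat u-u^*|_{L^1(X)^m}\le\kappa<\check r_{u^*}$, the definition of the critical radius forces $\hat u=u^*$. Hence $u_n\rightharpoonup u^*$ weakly in $L^1(X)^m$, and because $u^*$ is bang-bang, \cref{cor:equiv} upgrades this to $u_n\to u^*$ strongly in $L^1(X)^m$, contradicting $|u_n-u^*|_{L^1(X)^m}\ge\varepsilon$. Thus \cref{Stapropefo} holds for this $\kappa$, and as $\kappa\in(0,\check r_{u^*})$ was arbitrary, $\hat\kappa_{u^*}\ge\check r_{u^*}$; together with \cref{genprop0} this gives $\hat\kappa_{u^*}=\check r_{u^*}$.

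I expect the only real obstacle --- and it is a mild one --- to be the limit passage in the variational inequality, i.e.\ verifying that the perturbed optimality inclusions $\xi_n\in\Phi(u_n)$ survive the weak convergence $u_n\rightharpoonup\hat u$. This is precisely what the weak--strong continuity of the switching mapping in $(ii)$ of \cref{A2} provides; without it the weak limit $\hat u$ need not be a critical point and the argument collapses. All other ingredients (weak sequential compactness of $\mathcal U$, isolation of $u^*$, and the bang-bang characterisation of the weak-implies-strong property from \cref{cor:equiv}) enter only as direct citations.
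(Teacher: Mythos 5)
Your proposal is correct and follows essentially the same route as the paper: $(i)\Rightarrow(ii)$ via \cref{genprop0} and \cref{lemma0}, and $(ii)\Rightarrow(i)$ by a contradiction argument that extracts a weak limit $\hat u$ from a violating sequence, passes to the limit in the optimality inclusion using the weak--strong continuity of the switching mapping, identifies $\hat u=u^*$ via local isolation, and upgrades to strong convergence via the bang-bang property. Your write-up is in fact slightly more explicit than the paper's at the limit passage in the variational inequality, which the paper leaves as "taking limit".
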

\begin{proof}
	The implication $(i)\implies(ii)$ follows from \cref{genprop0} and \cref{lemma0}.

	Let us prove now the implication  $(ii)\implies(i)$. From  \cref{genprop0}, we have $\hat \kappa_{u^*}\le \check r_{u^*}$.
	Towards a contradiction, suppose that  $\hat \kappa_{u^*}<\check r_{u^*}$ and let $\delta\in(\hat\kappa_{u^*},\check r_{u^*})$. Then there exist a  number $\varepsilon>0$, a sequence $\{\xi_{n}\}_{n\in\mathbb N}\subset L^\infty(X)^m $ converging to zero in $L^\infty(X)^m$ and a sequence  $\{u_{n}\}_{n\in\mathbb N}\subset\mathcal U$ such that 
	\begin{align*}
	\delta\ge|u_n-u^*|_{{L^1(X)^m}}\ge\varepsilon\quad\text{and}\quad \xi_n\in \sigma_{u_n}+N_{\mathcal U}(u_n)\quad\text{for all $n\in\mathbb N$.}
	\end{align*}
	We can extract a subsequence $\{u_{n_k}\}_{k\in\mathbb N}$ of  $\{u_n\}_{n\in\mathbb N}$ converging weakly to some $\hat u\in\mathcal U$. Now, since each $u_{n_k}$ satisfies $\xi_{n_k}\in\sigma_{u_{n_k}}+
	N_{\mathcal U}(u_{n_k})$, taking limit, we obtain $0\in\sigma_{\hat u}+N_{\mathcal U}(\hat u)$. As $u^*$ is a locally isolated critical point of problem (\ref{conset})-(\ref{cost}) and
	\begin{align*}
		|\hat u-u^*|_{{L^1(X)^m}}\le\liminf_{n\to\infty}|u_{n_k}-u^*|_{{L^1(X)^m}}\le \delta<\hat r_{u^*},
	\end{align*}
	we conclude that $\hat u=u^*$, and hence that $u_{n_k}\rightharpoonup u^*$ weakly in ${L^1(X)^m}$. But, as $u^*$, is bang-bang, it must be that $u_{n_k}\to u^*$ in ${L^1(X)^m}$; a contradiction. Then $\hat\kappa_{u^*}=\check r_{u^*}$.
\end{proof}

\subsection{An application: $p$-regularization}	
We give now an application of the subregularity property concerned with \textit{Tykhonov  regularizations}.

Let $p>1$ be given. For each $\eta>0$, we consider the following \textit{regularized optimization problem}.
\begin{align*}
	\mathcal P_p(\eta):\,\,\,\,\min_{u\in\mathcal U}\left\lbrace \mathcal J(u)+\frac{\eta}{p}\int_{X}|u(x)|^p\,d\mu(x).\right\rbrace.
\end{align*}

From subregularity, we can conclude the following regularization result.
\begin{theorem}\label{regloc}
	Let $u^*\in\mathcal U$ be a critical point of problem (\ref{conset})-(\ref{cost}). Suppose that optimality mapping is strongly subregular at $u^*$. Then for every $\varepsilon>0$ there exists $\eta_\varepsilon>0$ such that  
	\begin{align*}
		\eta<\eta_\varepsilon \,\,\,\,\, \text{implies}	\,\,\,\,\,|u_{\eta}-u^*|_{L^1(X)^m}<\varepsilon
	\end{align*}
	for any local minimizer $u_{\eta}\in\mathcal U$ of $\mathcal P_p(\eta)$ satisfying $|u_{\eta}-u^*|_{L^1(X)^m}<\hat \kappa$.
\end{theorem}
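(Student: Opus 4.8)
The plan is to pass to the first-order necessary condition for the regularized problem $\mathcal P_p(\eta)$, read off from it an explicit element of the optimality mapping $\Phi$ at an arbitrary local minimizer $u_\eta$, bound that element uniformly in the $L^\infty$-norm by a multiple of $\eta$, and then feed this estimate into the strong subregularity of $\Phi$ at $u^*$. For the first step, note that $U$ compact gives $C:=\sup_{v\in U}|v|<\infty$ (and $C>0$, since $U$ has more than one element), so $|u(x)|\le C$ a.e.\ for every $u\in\mathcal U$ and the penalty $R_\eta(u):=\frac{\eta}{p}\int_X|u(x)|^p\,d\mu(x)$ is finite on $\mathcal U$. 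Because $p>1$, the map $t\mapsto\frac1p|t|^p$ is $C^1$ on $\mathbb R^m$ with gradient $|t|^{p-2}t$; hence (differentiating under the integral, which is justified since the admissible directions are bounded) $R_\eta$ is Gâteaux differentiable on $\mathcal U$ with $dR_\eta(u)v=\eta\int_X|u(x)|^{p-2}u(x)\cdot v(x)\,d\mu(x)$, and $dR_\eta(u)=\eta\,|u|^{p-2}u\in L^\infty(X)^m$. Therefore, if $u_\eta\in\mathcal U$ is a local minimizer of $\mathcal P_p(\eta)$, the sum rule together with the variational inequality on the convex set $\mathcal U$ (exactly as in \cref{Fonc}) yields $\big(d\mathcal J(u_\eta)+\eta\,|u_\eta|^{p-2}u_\eta\big)(u-u_\eta)\ge 0$ for all $u\in\mathcal U$, i.e.
\[
  \xi_\eta:=-\eta\,|u_\eta|^{p-2}u_\eta\ \in\ \sigma_{u_\eta}+N_{\mathcal U}(u_\eta)=\Phi(u_\eta).
\]

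The second step is the pointwise estimate $|\xi_\eta(x)|=\eta\,|u_\eta(x)|^{p-1}\le\eta\,C^{p-1}$ a.e., so $|\xi_\eta|_{L^\infty(X)^m}\le\eta\,C^{p-1}$ — a bound \emph{uniform over all local minimizers $u_\eta$} which tends to $0$ as $\eta\to 0$. For the third step, fix $\varepsilon>0$ (we may assume $\varepsilon<\hat\kappa$, otherwise the claim is vacuous), pick a radius $\kappa>0$ for which the subregularity property $(\ref{Stapropefo})$ holds, let $\delta>0$ be the number it supplies for this $\varepsilon$, and set $\eta_\varepsilon:=\delta\,C^{-(p-1)}$; then for $\eta<\eta_\varepsilon$ and any local minimizer $u_\eta$ of $\mathcal P_p(\eta)$ with $|u_\eta-u^*|_{L^1(X)^m}\le\kappa$ we have $\xi_\eta\in\Phi(u_\eta)$, $|\xi_\eta|_{L^\infty(X)^m}<\delta$, hence $|u_\eta-u^*|_{L^1(X)^m}<\varepsilon$. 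By \cref{subregthm} the subregularity radius equals the critical radius, $\hat\kappa_{u^*}=\check r_{u^*}$, and to promote the estimate to the whole open ball of this radius one argues by contradiction, using weak compactness of $\mathcal U$ (\cref{wcfs}), the weak–strong continuity of the switching map (\cref{A2}) to pass to the limit $0\in\Phi(\hat u)$ in the optimality inclusion, the fact that $u^*$ is the unique critical point in that open ball to conclude $\hat u=u^*$, and the bang-bang property of $u^*$ (\cref{lemma0}) together with \cref{cor:equiv} to turn the ensuing weak convergence $u_{\eta_n}\rightharpoonup u^*$ into $|u_{\eta_n}-u^*|_{L^1(X)^m}\to 0$, contradicting that the $u_{\eta_n}$ stay at distance at least $\varepsilon$ from $u^*$.

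I expect the real content to sit in the first step — correctly identifying the Gâteaux derivative of the $L^p$-penalty, which is exactly where $p>1$ is used (so that $|t|^p$ is differentiable at $t=0$), and deriving the variational inequality on $\mathcal U$; everything afterwards is bookkeeping resting on the uniformity of the bound in the second step, so that a single $\eta_\varepsilon$ works for all minimizers. The one genuinely delicate point is, within the contradiction argument, the borderline case $|\hat u-u^*|_{L^1(X)^m}=\check r_{u^*}$, where the passage from $0\in\Phi(\hat u)$ to $\hat u=u^*$ is no longer automatic and needs a separate argument; if one is content to state the estimate only on a ball of radius at most a subregularity radius of $\Phi$ at $u^*$, this difficulty disappears and the second step above already gives the result.
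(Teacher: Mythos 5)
Your proof is correct and follows essentially the same route as the paper: write the first-order condition for $\mathcal P_p(\eta)$, read off $\xi_\eta=-\eta|u_\eta|^{p-2}u_\eta\in\Phi(u_\eta)$ with $|\xi_\eta|_{L^\infty(X)^m}\le\eta\,C^{p-1}$ uniformly over all local minimizers, and invoke strong subregularity to take $\eta_\varepsilon=\delta_\varepsilon C^{1-p}$ (your sign for $\xi_\eta$ is in fact the one consistent with the definition of $N_{\mathcal U}$; the paper writes $+\eta|u_\eta|^{p-2}u_\eta$, which is immaterial for the norm bound). The only divergence is that you correctly flag, and sketch how to close via the compactness/contradiction argument of \cref{subregthm}, the gap between ``some radius $\kappa$ for which (\ref{Stapropefo}) holds'' and the supremal radius $\hat\kappa$ appearing in the statement --- a point the paper's proof passes over silently by applying (\ref{Stapropefo}) as if it held with $\kappa=\hat\kappa$ itself.
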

\begin{proof}
	Let $\varepsilon>0$ be arbitrary, and 
let $u_\eta\in\mathcal U$ be any local minimizer of $\mathcal P_p(\zeta)$ satisfying $|u_{\eta}-u^*|_{L^1(X)^m}<\hat \kappa$. The first order necessary condition can be written as
\begin{align*}
	d\mathcal J(u_\eta)(u-u_\eta)+\eta\int_X |u_\eta(x)|^{p-2} u_\eta(x)\cdot (u(x)-u_\eta(x))\, d\mu(x)\ge0\quad\text{ $\forall u\in\mathcal U$.}
\end{align*}
This can be rewritten as the inclusion $\xi_\eta\in \sigma_{u_\eta}+N_{\mathcal U}(u_\eta)$, 
where $\xi_\eta\in L^\infty(X)^m$ is given by $\xi_\eta(x):=\eta|u_\eta(x)|^{p-2} u_\eta(x)$.
By strong subregularity of the optimality mapping at $u^*$, there exists $\delta_\varepsilon>0$ such that if $|\xi_\eta|_{L^\infty(X)}<\delta_\varepsilon$, then $|u_\eta-u^*|_{L^1(X)}<\varepsilon$. It is enough then to take $\eta_\varepsilon:=\delta_\varepsilon\big[\sup U\big]^{1-p}$
\end{proof}

We omit the case $p=1$ as it is a bit more involved, however and identical result can be accomplished by means of subregularity.

\section{Examples of the theory}\label{Examples}
We provide three examples of the class of  optimal control problems studied in this paper. The first one is constrained by an ordinary differential equation, and the second and third ones by partial differential equations.

\subsection{Affine optimal control problems constrained by ordinary differential equations}
As a canonical example of the theory developed in previous sections, we consider the  \textit{affine optimal control  problem}  given by
\begin{align}\label{costexa1}
	\min_{u\in\mathcal U}\left\lbrace s_T(y_{u}(T))+\int_{0}^T\Big[g_0\big(t,y_{u}(t)\big)+\sum_{i=1}^m g_i\big(t,y_{u}(u)\big) u_i(t)\Big]\, dt \right\rbrace,
\end{align}
where  for each control $u=(u_1,\dots,u_m)\in\mathcal U$, there is a unique state $y_{u}:[0,T]\to\mathbb R^n$ satisfying
\begin{align}\label{consetexa1}
		\dot y_{u}=f_0(\cdot,y_{u})+\sum_{i=1}^m f_i(\cdot,y_{u}) u_i,
		\qquad
		y_u(0)=y_0 .
\end{align}
We give below the specifications of problem (\ref{costexa1})--(\ref{consetexa1})  and the  technical details.

The number $T>0$ is the (fixed) \textit{time horizon}.  The underlying measure space $([0,T],\mathcal A_{[0,T]}, \mathcal L)$ consists of the $\sigma$-algebra $\mathcal A_{[0,T]}$  of Lebesgue measurable subsets of $[0,T]$, and $\mathcal L:\mathcal A\to\mathbb R$ the Lebesgue measure on $[0,T]$. 

For a compact convex set $U\subset\mathbb R^m$, the feasible set takes the form
\begin{align*}
	\mathcal U=\left\lbrace u \in L^1(0,T)^m :\,\, u(t)\in U\,\,\, \text{for a.e. $t\in[0,T]$} \right\rbrace.
\end{align*}
The functions $f_0,\dots,f_m:[0,T]\times\mathbb R^n\to \mathbb R^n$ are Carath\' eodory and satisfy
\begin{align*}
	 \esssup_{t\in[0,T]}\sup_{x\in\mathbb R^n}f_i(t,x)<\infty\quad\text{and}\quad\esssup_{t\in[0,T]}\sup_{x_1,x_2\in\mathbb R^n}\frac{|f_{i}(t,x_1)-f_{i}(t,x_2)|}{|x_1-x_2|}<\infty
\end{align*}
for each $i\in\{0,\dots,m\}$.
 These functions and the \textit{initial datum} $y_0\in\mathbb R^n$ determine the dynamic for each control in the following way. For each $u\in\mathcal U$, by the classical global existence theorem (see, e.g., \cite[Theorem 2.1.1]{Bressan}),  there exists a unique state $y_u\in W^{1,1}\big([0,T]\big)^n$ satisfying (\ref{consetexa1}), i.e., 
 \begin{align*}
 	y_{u}(t)=y_0+\int_{0}^t\Big[f_0(s,y_{u}(s))+\sum_{i=1}^mf_i(s,y_{u}(s)) u_i(s)\Big]\, ds\quad \forall t\in[0,T].
 \end{align*}
The  \textit{cost functions} $g_0,\dots g_m:[0,T]\times\mathbb R^n\to\mathbb R$ are Carath\' eodory and the \textit{scrap function} $s_T:\mathbb R^n\to \mathbb R$ is continuous.
The objective functional $\mathcal J:\mathcal U\to\mathbb R$ is given by
\begin{align*}
		\mathcal J(u)= s_T(y_{u}(T))+\int_{0}^T\Big[g_0\big(t,y_{u}(t)\big)+\sum_{i=1}^m g_i\big(t,y_{u}(u)\big) u_i(t)\Big]\, dt.
\end{align*}
Clearly, problem (\ref{costexa1})--(\ref{consetexa1})  trivially satisfies $(i)$ and $(ii)$ of Assumption \ref{standingass}. Item $(iii)$ is well-known to hold for these type of problems. Indeed, it follows easily from the integral form of the  Gr\"onwall inequality that the  \textit{input-output mapping}  $\mathcal S:\mathcal U\to C\big([0,T]\big)^n$, given by $\mathcal S(u)=y_u$, is weakly-strongly sequentially continuous. From this and the affine structure of the problem, a few calculations yield that the objective functional $\mathcal J:\mathcal U\to\mathbb R$ is weakly sequentially continuous.

\subsection{Elliptic optimal control problems}

We consider now an \textit{elliptic optimal control problem}.
\begin{align}\label{costexa2}
	\min_{u\in\mathcal U}\left\lbrace\int_{\Omega} L(x,y_{u}(x))\, dx \right\rbrace,
\end{align}
where for each control $u\in\mathcal U$, there is a unique state $y_{u}:\Omega\to\mathbb R$ satisfying
\begin{align}\label{consetexa2}
		-\Delta y_u+d(\cdot,y_u)=u \quad \text{in $\Omega$},
		\quad\,
		y_u=0 \quad \text{on $\Omega$}.
\end{align}
The specifications of problem (\ref{costexa2})--(\ref{consetexa2})  and the  data assumptions are given below.

The underlying measure space $(\Omega,\mathcal A_{\Omega}, \mathcal L)$ consists of  a bounded Lipschitz domain $\Omega$, the $\sigma$-algebra $\mathcal A_{\Omega}$  of Lebesgue measurable subsets of $\Omega$, and $\mathcal L:\mathcal A\to\mathbb R$ the Lebesgue measure on $\Omega$. 

For numbers $u_a,u_b\in\mathbb R$ with $u_a<u_{b}$,  the feasible set takes the form
\begin{align*}
	\mathcal U=\left\lbrace u \in L^1(\Omega):\,\, u(x)\in [u_a,u_b]\,\,\, \text{for a.e. $x\in\Omega$} \right\rbrace.
\end{align*}
The function $d:\Omega\times\mathbb R\to\mathbb R$ is Carath\' eodory, monotone nondecreasing with respect to the second variable and satisfies
\begin{align*}  \esssup_{x\in\Omega}\sup_{y\in K}|d(x,y)|<\infty
\end{align*}
for every compact set $K\subset \mathbb R$.
 For each control $u\in\mathcal U$, there exists a unique $y_u\in H_{0}^1(\Omega)\cap C(\bar\Omega)$ (see, e.g., 
 \cite[Theorem 2.7]{Casaselliptic}) satisfying (\ref{consetexa2}), i.e., 
\begin{align*}
	\int_{\Omega} \Big[\nabla y_{u}(x)\cdot\nabla \varphi(x)+d(x,y(x))\varphi(x)\Big]\,dx=\int_{\Omega}u(x) \varphi(x)\, dx\quad\forall \varphi\in H_0^1(\Omega).
\end{align*}
 The function $L:\Omega\times\mathbb R\to\mathbb R$ is Carath\'eodory and bounded by below.
The objective functional $\mathcal J:\mathcal U\to\mathbb R$ is then given by
\begin{align*}
	\mathcal J(u)=\int_{\Omega}L(x,y_{u}(x))\, dx.
\end{align*}
Clearly, problem (\ref{costexa2})--(\ref{consetexa2})  trivially satisfies $(i)$ and $(ii)$ of Assumption \ref{standingass}. One can easily prove that   the \textit{control-to-state mapping}  $\mathcal S:\mathcal U\to H_0^1(\Omega)\cap C(\Omega)$, given by $\mathcal S(u)=y_u$, is weakly-strongly sequentially continuous. For example,  using the arguments given in \cite[Lemma 2.5]{Elliptic} and  \cite[Proposition 2.12]{Elliptic}. From this and the form of the cost function, it is almost trivial that the objective functional $\mathcal J:\mathcal U\to\mathbb R$ is weakly sequentially continuous.

\subsection{A velocity tracking problem}
In this subsection, we discuss an optimization problem constrained by the Navier-Stokes equations. We skip technical details and rather point to the relevant references.

Let $\Omega\subset\mathbb R^2$ be a domain with boundary of class $C^2$ and $T>0$. Denote $Q:=\Omega\times(0,T)$, $\Sigma:=\partial\Omega\times(0,T)$ and let  $\bm u_a,\bm u_b:Q\to\mathbb R^2$ be bounded functions. The feasible set is
\begin{align}
	\mathcal U:=\left\lbrace \bm u:\Omega\to\mathbb R^2:\,\, \bm u_a(x,t)\le\bm u(x,t)\le \bm u_b(x,t)\,\,\,\,\text{for a.e. $(x,t)\in Q$} \right\rbrace.
\end{align}
For a given $\bm{y}_d\in L^\infty(Q)$, the optimal control is given by
\begin{align}
	\min_{u\in\mathcal U}\left\lbrace \frac{1}{2}\int_{0}^T\int_\Omega |\bm{y_u} - \bm y_{d}|^2 \, dx\, dt\right\rbrace.
\end{align}
subject to
\begin{align}\label{dynsys}
	\left\{ \begin{array}{l}
		\partial_t\bm{y_u}- \nu\Delta \bm{y_u}+ (\bm{y_u}\cdot\nabla)\bm{y_u} + \nabla \bm{p_u} = \bm u\,\,\,\,\text{in Q},\\
		\\ \dive \bm{y_u}=0\,\,\,\,\text{in Q},\,\, \bm {y_u}=0\,\,\,\,\text{on $\Sigma$},\,\,\, \bm{y_u}(\cdot,0)=\bm y_0\,\,\,\text{in $\Omega$}.
	\end{array}
	\right.
\end{align}
We refer the reader to  \cite[Section 2]{Casasfluids} for technical details of this problem. It was proved in \cite[Corollary 2.4]{Casasfluids} that the control-to-state operator maps weakly convergent sequences to strongly convergent ones. This can easily be used to conclude that weak sequential continuity of the objective functional.

\appendix
\section{Convergence under extremal conditions} 
There are spaces where weak convergence and strong convergence are equivalent, such spaces are said to have the Schur property; the canonical example being  the space $l^1(\mathbb N)$ of summable sequences. 

Unfortunately, general $L^1$-spaces  may not  possess the Schur property, being  the space ${L^1([0,1])}$ the classic counterexample. 
In \cite{Visin} several results regarding these type of properties for  particular sequences in general $L^1$-spaces were given, being probably the most important the following one.

\begin{theorem} \cite[Theorem 1]{Visin}
	Let $(X,\mathcal A,\mu)$ be a complete $\sigma$-finite measure space. Let $\{f_n\}_{n\in\mathbb N}\subset L^1(X)^m$ be a sequence of functions converging weakly in $L^1(X)^m$ to a function $f\in L^1(X)^m$. If 
	\begin{align*}
		\text{$f(x)$ is an extremal point of $K(x):=\clconv\Big(\{f(x)\}\cup\{f_n(x)\}_{n\in\mathbb N}\Big)$}
	\end{align*}
		 for a.e. $x\in X$, then $|f_n-f|_{L^1(X)^m}\to 0$.
\end{theorem}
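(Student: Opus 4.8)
The plan is to reduce the statement to a pointwise (finite-dimensional) fact about extreme points combined with a measure-theoretic selection/equi-integrability argument. First I would recall that a weakly convergent sequence in $L^1$ is, by the Dunford--Pettis theorem, uniformly integrable; hence the family $\{f_n\}$ together with $f$ is equi-integrable, and the sets $K(x)$ are, after a standard truncation, uniformly bounded in $x$ on a set of large measure. The core idea is that for each fixed $x$ the value $f(x)$ is an extreme point of the closed convex set $K(x)$, which contains all the $f_n(x)$; so any way of writing $f(x)$ as a limit of convex combinations of the $f_n(x)$ must concentrate the mass near $f(x)$.

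The key step is to turn ``$f_n \weakly f$'' into ``$f_n(x)\to f(x)$ for a.e.\ $x$ in a suitable averaged sense''. I would do this via Young measures (or, equivalently, the theory of measure-valued limits): passing to a subsequence, $\{f_n\}$ generates a Young measure $x\mapsto \nu_x$, a probability measure supported on $K(x)$ for a.e.\ $x$, whose barycenter is $\lim_n f_n$ in the weak sense. Because $f_n\weakly f$ and the generated Young measure has barycenter equal to the weak limit, we get $\int_{\mathbb R^m} y\,d\nu_x(y) = f(x)$ for a.e.\ $x$. Since $f(x)$ is an extreme point of $K(x)\supseteq \operatorname{supp}\nu_x$ and $f(x)$ is the barycenter of $\nu_x$, a classical convexity fact forces $\nu_x = \delta_{f(x)}$ for a.e.\ $x$. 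A Young measure that is a.e.\ a Dirac mass implies that the generating sequence converges in measure to $f$; combined with equi-integrability, convergence in measure upgrades to $L^1$ convergence (Vitali). Finally, since every subsequence has a further subsequence converging to $f$ in $L^1$, the whole sequence converges.

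The main obstacle I anticipate is the justification that the barycenter of the Young measure coincides \emph{pointwise a.e.} with the weak limit $f$, and more delicately that the test against the coordinate functions $y\mapsto y_i$ is legitimate despite these being unbounded; this is exactly where equi-integrability of $\{f_n\}$ is essential, and one must be careful because the fundamental theorem for Young measures is usually stated for bounded (or at least tight) generating sequences. I would handle this by a truncation argument: work on sets $X_R=\{x: |f(x)|\le R\}$ and restrict to the sequence $f_n \mathbf 1_{\{|f_n|\le R\}}$, deduce $\nu_x=\delta_{f(x)}$ on $X_R$ for every $R$, then let $R\to\infty$. A secondary technical point is that $K(x)$ must be measurable in $x$ as a set-valued map so that the Young measure statements make sense; this follows since $x\mapsto (f(x),f_1(x),f_2(x),\dots)$ is measurable and $K(x)$ is the closed convex hull of a measurable countable family, but it should be stated carefully. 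Everything else --- Dunford--Pettis, Vitali, the extreme-point/barycenter dichotomy --- is routine.
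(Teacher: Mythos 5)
Your proposal takes a genuinely different route from the paper. Note first that the paper does not actually prove the quoted statement in full generality: it proves \cref{Visintinthm}, a variant in which the set-valued map has \emph{compact} convex values (enough for the application, where $F(x)=U$), whereas $K(x)=\clconv(\{f(x)\}\cup\{f_n(x)\}_{n\in\mathbb N})$ may be unbounded. The paper's argument is elementary and inductive: after normalizing $f=0$, it builds (via measurable selection theorems) a measurable orthonormal frame $e_1(x),\dots,e_m(x)$ with $-e_i(x)$ in the normal cone of $F(x)\cap H_i(x)$ at $0$, so that $e_1(x)\cdot f_n(x)\ge 0$ pointwise; a scalar lemma (weak $L^1$ convergence plus a one-sided pointwise bound implies strong convergence, proved with Dunford--Pettis and Egorov) gives $e_1\cdot f_n\to 0$ in $L^1$, and one peels off one dimension at a time by passing to a.e.\ convergent subsequences and restricting to $H_{i+1}(x)$. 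You replace this induction by the fundamental theorem of Young measures plus Bauer's barycenter characterization of extreme points, then convergence in measure plus Vitali. Your route is conceptually cleaner and shorter, at the price of importing the equi-integrable version of the Young measure theorem and the support/measurability statements for the $x$-dependent sets $K(x)$; the paper's proof is longer but self-contained and makes every measurability issue explicit, which is precisely its stated purpose.

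Two steps in your sketch need repair before the argument closes. (i) The truncation as described does not quite work: for fixed $R$ the sequence $f_n\mathbf 1_{\{|f_n|\le R\}}$ has a weak limit that differs from $f$ on $X_R$ by an error controlled only by $\sup_n\int_{\{|f_n|>R\}}|f_n|\,d\mu$, so the associated Young measure has barycenter only \emph{approximately} $f(x)$, and the exact extremality argument is unavailable for fixed $R$. The fix is to drop the truncation altogether: for a Dunford--Pettis (equi-integrable) sequence, the fundamental theorem of Young measures already yields a.e.\ a probability measure $\nu_x$ supported in any closed set carrying the values and with barycenter equal to the weak $L^1$ limit, so no cut-off is needed. (ii) The ``classical convexity fact'' that a barycenter sitting at an extreme point forces a Dirac mass is Bauer's theorem for \emph{compact} convex sets; for an unbounded closed convex $K(x)$ and a measure with merely finite first moment it requires an extra argument (one must rule out a probability measure supported in $K(x)\setminus B(f(x),r)$ having barycenter $f(x)$, which for unbounded $K(x)$ is not immediate from extremality alone). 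This is the same loss of generality the paper itself accepts; if you, like the paper, restrict to compact convex values, your argument goes through, but for the statement exactly as quoted you still owe the reader this step.
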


 Nevertheless, the notation in the proof \cite[Theorem 1]{Visin} is sloppy and quite confusing sometimes. Moreover, it ignores many measurability issues.  In this paper, we need the following result, proved in  \cite{Visin} as a corollary of  \cite[Theorem 1]{Visin}.
 \begin{corollary}\cite[Corollary 2]{Visin}\label{corynq}
 		Let $(X,\mathcal A,\mu)$ be a  complete $\sigma$-finite measure space, $F:X\twoheadrightarrow\mathbb R^m$ a set-valued mapping taking nonempty compact convex values, and $f\in {L^1(X)^m}$ such that $f(x)\in\ext F(x)$ for a.e. $x\in X$. Let  $\{f_n\}_{n\in\mathbb N}\subset {L^1(X)^m}$ be a sequence of functions such that $f_n(x)\in F(x)$ for a.e. $x\in X$. If $f_n\rightharpoonup f$  weakly in ${L^1(X)^m}$, then $f_n\to f$ in ${L^1(X)^m}$.
\end{corollary}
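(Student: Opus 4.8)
The plan is to derive the corollary from \cite[Theorem 1]{Visin} stated above, whose proof carries all the analytic weight; the reduction itself relies only on the elementary fact that an extreme point of a convex set remains extreme in every convex subset containing it.

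First I would discard a $\mu$-null set so that, for all remaining $x\in X$, one has $f(x)\in\ext F(x)$ and $f_n(x)\in F(x)$ for every $n\in\mathbb N$ at once (a countable union of null sets is null). For such an $x$, note that $f(x)\in F(x)$, since $\ext F(x)\subseteq F(x)$, and set $K(x):=\clconv\big(\{f(x)\}\cup\{f_n(x)\}_{n\in\mathbb N}\big)$, the closed convex hull appearing in \cite[Theorem 1]{Visin}. Because $F(x)$ is closed and convex and contains $f(x)$ together with every $f_n(x)$, it contains their closed convex hull, so $K(x)\subseteq F(x)$.

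Next I would verify the pointwise extremality hypothesis of \cite[Theorem 1]{Visin}, i.e.\ that $f(x)\in\ext K(x)$ for a.e.\ $x\in X$. Fix such an $x$ and suppose $f(x)=\frac12(a+b)$ with $a,b\in K(x)$. Since $K(x)\subseteq F(x)$, also $a,b\in F(x)$, and extremality of $f(x)$ in $F(x)$ forces $a=b=f(x)$. Hence $f(x)$ is an extreme point of $K(x)$.

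The hypotheses of \cite[Theorem 1]{Visin} are now met — the measure space is complete and $\sigma$-finite, $f_n\rightharpoonup f$ weakly in ${L^1(X)^m}$, and $f(x)$ is extremal in $K(x)$ for a.e.\ $x$ — so that theorem yields $|f_n-f|_{{L^1(X)^m}}\to0$, as claimed. I expect no genuine difficulty in this argument: the statement really is a corollary. The delicate point, and the reason it is confined to this appendix, lies entirely within the proof of \cite[Theorem 1]{Visin}, where one must carefully treat the measurability of the multifunction $x\mapsto K(x)$ and of the measurable selections used there — an issue handled only loosely in the original source.
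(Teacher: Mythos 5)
Your reduction is correct as far as it goes: after discarding a common null set, $K(x):=\clconv\big(\{f(x)\}\cup\{f_n(x)\}_{n\in\mathbb N}\big)$ is a convex subset of $F(x)$ containing $f(x)$, and an extreme point of $F(x)$ lying in such a subset is automatically extreme in it, so the hypotheses of \cite[Theorem 1]{Visin} are met and the conclusion follows. This is exactly how the original source derives its Corollary~2. It is, however, a genuinely different route from the one taken in this paper, and deliberately so: the appendix exists precisely because the proof of \cite[Theorem 1]{Visin} is regarded here as unreliable (sloppy notation, unaddressed measurability of the multifunctions involved), so the goal is to reach the corollary \emph{without} invoking that theorem. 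The paper instead proves the self-contained variant \cref{Visintinthm} --- which additionally assumes $F$ measurable but drops completeness and $\sigma$-finiteness --- by constructing in \cref{norconlem}, via measurable selections from normal cones, an a.e.\ orthonormal frame $e_1,\dots,e_m$ adapted to $F$ and $f$, and then inducting on $i$ so that each scalar sequence $e_i\cdot(f_n-f)$ falls under \cref{lem:weak_gives_strong_nonneg} (weak convergence plus a.e.\ nonnegative liminf implies $L^1$-convergence). Your argument buys brevity, but it rests all the analytic weight on the very result whose proof the authors distrust --- the measurability issues you defer to \cite[Theorem 1]{Visin} are exactly what the appendix is written to resolve --- whereas the paper's argument is longer but self-contained.
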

 
 We use the first subsection of this appendix to provide an alternative proof of \cite[Corollary 2]{Visin}. We mention that the result we give (\cref{Visintinthm}) is somewhat different in the assumptions. We assume that the set-valued mapping in \cref{corynq} takes compact convex values, which suffices for the purposes of this paper. So in  that regard the result given here is weaker. On the positive side, though not a significant improvement, our result drops some measure-theoretical assumptions made in  \cite{Visin}  such as completeness and sigma-finiteness of the underlying measure, see \cref{Visintinthm} below for the details.  To the best of the author's knowledge, there is not an alternative proof of \cite[Theorem~1]{Visin}  or \cite[Corollary 2]{Visin} in the literature.
 
 We also point out that the proof of \cite[Proposition 1]{Visin} contains a flaw, namely it is assumed that the set of extreme points of a convex set is closed;  which is not true for subsets of $\mathbb R^m$ with $m\ge3$.
 In the second subsection of this appendix we give a different version of a result related to \cite[Proposition 1]{Visin} in general measure spaces (the result given there is for $X$ being a measurable subset of the Euclidean space). Finally, we comment that our \cref{Clustering} has as a corollary the result \cite[Proposition 1]{Visin} under some additional assumptions.

\subsection{The extremal Schur's property}
We first prove some preparatory lemmas. The first one is concerned with the \textit{equi-integrability} of weakly precompact sets; we refer the reader to  \cite[Section 15]{Voigt2020} for the definition of equi-integrability and its consequences.

\begin{lemma}\label{lem:equi-integrable}
	Let $(X,\mathcal A,\mu)$ be a measure space and $\mathcal V $ be a weakly relatively compact subset of ${L^1(X)^m}$.
	The following statements hold.
	\begin{enumerate}
		\item For every $\varepsilon > 0$ there exists $\delta > 0$ 
		such that 
		\begin{align*}
			\mu(A)<\delta\quad\text{implies}\quad \sup_{f\in\mathcal V}\int_{A}|f(x)|d\mu(x)<\varepsilon\quad\text{for all $A\in\mathcal A$.}
		\end{align*}
		\item For every $\varepsilon > 0$ there exists a finite measure set $B \in \mathcal A$ 
		such that
		\begin{align*}
			\sup_{f \in \mathcal V} \int_{X \setminus B} |f| d\mu< \varepsilon.
		\end{align*}
	\end{enumerate}
\end{lemma}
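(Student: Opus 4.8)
The plan is to derive both statements from the Dunford–Pettis theorem, which characterizes weakly relatively compact subsets of $L^1(X)^m$ precisely as the bounded, uniformly integrable (equi-integrable) families. Part (i) is almost a restatement of uniform integrability in the form "uniform absolute continuity of the integrals," and part (ii) is the "no escape of mass to infinity" / "tightness" half of the characterization. So the real content is to invoke Dunford–Pettis correctly and then unpack its conclusion into the two displayed assertions; since the paper has already cited \cite[Section 15]{Voigt2020} for equi-integrability, I would phrase the proof as: by Dunford–Pettis (see the cited reference), $\mathcal V$ being weakly relatively compact in $L^1(X)^m$ implies $\mathcal V$ is equi-integrable, and then both (i) and (ii) are exactly the two defining conditions of equi-integrability in a general (not necessarily finite or $\sigma$-finite) measure space.

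For part (i), I would argue component-wise: it suffices to treat $m=1$, since $|f(x)| \le \sum_{j=1}^m |f_j(x)|$ and each coordinate projection $\mathcal V \ni f \mapsto f_j$ maps weakly relatively compact sets to weakly relatively compact sets in $L^1(X)$. Then uniform integrability of $\{f_j : f \in \mathcal V\}$ gives, for each $\e>0$, a $\delta_j>0$ with $\mu(A)<\delta_j \Rightarrow \sup \int_A |f_j|\,d\mu < \e/m$; taking $\delta := \min_j \delta_j$ and summing over $j$ yields the claim. For part (ii), again reduce to one component; the tightness part of Dunford–Pettis provides, for each $\e>0$ and each $j$, a set $B_j \in \mathcal A$ of finite measure with $\sup_{f} \int_{X\setminus B_j} |f_j|\,d\mu < \e/m$, and then $B := \bigcup_{j=1}^m B_j$ has finite measure and works.

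An alternative, more self-contained route — if one does not want to quote the tightness half of Dunford–Pettis in full generality — is to prove (ii) directly from weak relative compactness by contradiction: if no finite-measure set $B$ worked for some $\e_0$, one could build an increasing sequence of finite-measure sets $B_1 \subset B_2 \subset \cdots$ and functions $f_n \in \mathcal V$ with $\int_{X \setminus B_n} |f_n| \ge \e_0$, then pass to a weakly convergent subsequence $f_{n_k} \rightharpoonup f$ (Eberlein–Šmulian), and derive a contradiction with part (i) applied to the "new mass" carried outside $B_{n_k}$, using that $f \in L^1$ has $\int_{X \setminus B} |f| \to 0$ along any exhausting sequence. The main obstacle is bookkeeping: ensuring the sets $B_n$ can be chosen of finite measure while capturing more and more of the mass of the limit $f$ and of the earlier $f_{n_j}$, and handling the fact that the measure space is only assumed to be a bare measure space — so I would lean on citing Dunford–Pettis in the general form from \cite{Voigt2020} rather than reproving it, keeping the proof short.

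$$\text{(No further computation is needed; the two displays follow by unpacking equi-integrability.)}$$
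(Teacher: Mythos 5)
Your proposal is correct and follows essentially the same route as the paper: the paper's proof simply invokes the Dunford--Pettis theorem in the general measure-space form of \cite[Theorem 15.4]{Voigt2020} to conclude that $\mathcal V$ is equi-integrable, and then cites \cite[Lemma 15.3]{Voigt2020} to unpack equi-integrability into exactly the two displayed statements. Your component-wise reduction to $m=1$ and the sketched self-contained contradiction argument for (ii) are harmless additions, but the core step --- quoting Dunford--Pettis and reading off the two conditions --- is identical to what the paper does.
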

\begin{proof}
	By the Dunford-Pettis Theorem, the set $\mathcal V$ is a family of equi-integrable functions; see \cite[Theorem 15.4]{Voigt2020}. We can then apply \cite[Lemma 15.3]{Voigt2020} to conclude the result.
\end{proof}

 A proof of the next lemma, for the particular case of finite measure spaces, can be found in \cite[Proposition 2.3.38]{NonlinearAnalysis}. We give a proof for the general case.
\begin{lemma}\label{lem:weak_gives_strong_nonneg}
	Let $(X,\mathcal A,\mu)$ be a measure space,  $\{f_n\}_{n=1}^\infty \subset {L^1(X)}$ a sequence and  $f\in {L^1(X)}$. Suppose that 
	\begin{align*}
	f_n\rightharpoonup f\text{ weakly in ${L^1(X)}$}\quad\text{and}\quad	f(x)\le\liminf_{n\to\infty}f_{n}(x)\quad\text{for a.e. $x\in X$.}
	\end{align*}
	Then, $|f_n-f|_{{L^1(X)^m}}\to0$.
\end{lemma}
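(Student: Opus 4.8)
The plan is to reduce the claim to showing $\int_X (f-f_n)^+\,d\mu\to 0$, where $t^+:=\max\{t,0\}$, and then to recover full $L^1$-convergence from the scalar limit obtained by testing the weak convergence against the constant function $1\in L^\infty(X)$.

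First I would set $g_n:=(f-f_n)^+$ and note that the one-sided hypothesis $f(x)\le\liminf_{n\to\infty}f_n(x)$ a.e.\ gives $\limsup_{n\to\infty}\big(f(x)-f_n(x)\big)=f(x)-\liminf_{n\to\infty}f_n(x)\le 0$ a.e.; since $t\mapsto t^+$ is continuous and nondecreasing, this forces $g_n(x)\to 0$ for a.e.\ $x\in X$. Next, since $f_n\rightharpoonup f$ weakly, the set $\mathcal V:=\{f_n:n\in\mathbb N\}\cup\{f\}$ is weakly relatively compact in $L^1(X)$, so \cref{lem:equi-integrable} applies to $\mathcal V$. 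From $0\le g_n\le |f|+|f_n|$ it follows at once that $\{g_n\}_{n\in\mathbb N}$ is equi-integrable and that, given $\varepsilon>0$, there is a finite-measure set $B\in\mathcal A$ with $\sup_{n}\int_{X\setminus B}g_n\,d\mu<\varepsilon$. On $B$ the measure is finite, $g_n\to 0$ a.e., and $\{g_n\}$ is equi-integrable, so the Vitali convergence theorem yields $\int_B g_n\,d\mu\to 0$; together with the tail bound this gives $\int_X g_n\,d\mu\to 0$, that is, $\int_X(f-f_n)^+\,d\mu\to 0$.

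To finish, I would test $f_n\rightharpoonup f$ against $1\in L^\infty(X)$ to get $\int_X(f_n-f)\,d\mu\to 0$. Since $(f_n-f)^+=(f_n-f)+(f-f_n)^+$, combining the two limits gives $\int_X(f_n-f)^+\,d\mu\to 0$ as well, and hence $\int_X|f_n-f|\,d\mu=\int_X(f-f_n)^+\,d\mu+\int_X(f_n-f)^+\,d\mu\to 0$. The only point that needs care — and nothing here is deep — is that the measure space is not assumed finite (nor even $\sigma$-finite), so one cannot invoke Vitali's theorem globally and must first localize to a finite-measure set via the uniform tail estimate in \cref{lem:equi-integrable}; the mildly clever step is working with $(f-f_n)^+$ rather than with $|f-f_n|$, which is what turns the one-sided assumption into something usable, the missing half being supplied for free by the scalar limit $\int_X(f_n-f)\,d\mu\to 0$.
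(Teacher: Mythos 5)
Your proof is correct and follows essentially the same route as the paper's: both localize to a finite-measure set via \cref{lem:equi-integrable} (Dunford--Pettis), use the one-sided pointwise hypothesis to annihilate the negative part of $v_n:=f_n-f$, and recover the rest by testing the weak convergence against a bounded function. The only difference is one of packaging: you make the identity $|v_n|=v_n+2(f-f_n)^+$ explicit and dispatch the nonnegative piece with Vitali's theorem on the finite-measure set, whereas the paper runs Egorov's theorem by hand to obtain $v_n\ge -\varepsilon\bigl(8\mu(B\setminus A)\bigr)^{-1}$ and hence $|v_n|\le v_n+\varepsilon\bigl(4\mu(B\setminus A)\bigr)^{-1}$ off a small exceptional set --- the two devices are interchangeable here.
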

\begin{proof}
	For each $n\in\mathbb N$, define $v_n:=f_n-f$.  Then $\liminf_{n\to\infty} v_n\ge0$ a.e. in $X$, and since $v_n\rightharpoonup 0$ weakly in ${L^1(X)}$, the set $\{v_{n}: n\in\mathbb N\}$ is a weakly relatively compact subset of ${L^1(X)}$. 
	
	Let $\varepsilon > 0$ be arbitrary. Due to \cref{lem:equi-integrable},
	there exists a finite measure set $B \in\mathcal A$  and $\delta>0$ such that
	\begin{align*}
		\sup_{n\in\mathbb N} \int_{X \setminus B} |v_n(x)| d\mu(x)< \frac{\varepsilon}{4}\quad\text{and}\quad \sup_{\substack{A\in\mathcal A \\ \mu(A)<\delta}}\sup_{n\in\mathbb N} \int_{A} |v_n(x)| d\mu(x) < \frac{\varepsilon}{4}
	\end{align*}
	By Egorov's theorem, we can find $A \subset B$ with $\mu(A)<\delta$ such that the convergence
	\begin{equation*}
		\lim_{n \to \infty} \inf_{k \ge n} v_k
		=
		\liminf_{n \to \infty} v_n
	\end{equation*}
	is uniform in $B \setminus A$. 	In particular, there exists $n_1\in \mathbb N$
	such that
	$v_n\ge -\varepsilon (8\mu(B \setminus A))^{-1}$
	a.e.\ on $B \setminus A$ for all $n\ge n_1$.
	It follows that
	${|v_n|} \le v_n + \varepsilon(4\mu(B \setminus A))^{-1}$ a.e. in $B \setminus A$ for all $n\ge n_1$.
	By definition of weak convergence,
	there exists $n_2\in\mathbb N$ such that
	$\int_{B \setminus A} v_n d \mu < \varepsilon/4$
	for all $n \ge n_2$.
	Putting all together, 
	\begin{align*}
		\int_X {|v_n(x)|} d\mu(x)
		&=
		\int_{X \setminus B} {|v_n(x)|} d\mu(x)
		+
		\int_{B \setminus A} {|v_n(x)|} d\mu(x)+
		\int_A {|v_n(x)|} d\mu(x)\\
		&<\frac{\varepsilon}{4}
		+
		\int_{B \setminus A}\Big[v_n(x)+\frac{\varepsilon}{4\mu(B\setminus A)}\Big]\,d\mu(x)	+\frac{\varepsilon}{4}\le\varepsilon
	\end{align*}
	 for all $n \ge\max\{n_1,n_2\}$. This shows that  $|v_n|_{{L^1(X)^m}} \to 0$.
\end{proof}

Recall that given a convex set $C\in\mathbb R^m$ and $u\in C$, the set 
	\begin{align*}
		N_{C}(u):=\{\nu\in\mathbb R^m:\, \nu\cdot (v-u)\le0\quad\text{for all $v\in C$}\}
	\end{align*}
	is the normal cone to $C$ at $u$. We now address some measurability issues arising from the normal cone. These are of  technical nature and will be needed later on. 
	We use the standard definitions of measurability of set-valued mappings, see  \cite[Definition 6.2.1]{Anhand}.
\begin{lemma}\label{measissues}
		Let $(X,\mathcal A)$ be a measurable space,  $F:X\twoheadrightarrow \mathbb R^m$ a measurable set-valued mapping taking nonempty compact convex values, and $f:X\to\mathbb R^m$ a measurable function such that $f(x)\in F(x)$ for a.e. $x\in X$.
		The set-valued mapping 
	\begin{align*}
		x\mapsmto N_{F(x)}\big(f(x)\big)\quad \text{from $X$ to $\mathbb R^m$ is measurable. }
	\end{align*}
\end{lemma}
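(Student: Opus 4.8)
The plan is to prove measurability of $x \mapsto N_{F(x)}(f(x))$ by writing the normal cone as a countable intersection of ``half-space-valued'' set-valued maps whose measurability can be reduced to the measurability of scalar functions. First I would recall the explicit description of the normal cone: for a nonempty compact convex set $C \subset \mathbb R^m$ and $u \in C$,
\begin{align*}
	N_C(u) = \{\nu \in \mathbb R^m : \nu \cdot (v - u) \le 0 \text{ for all } v \in C\} = \{\nu \in \mathbb R^m : \sigma_C(\nu) \le \nu \cdot u\},
\end{align*}
where $\sigma_C(\nu) := \sup_{v \in C} \nu \cdot v$ is the support function. Since each $F(x)$ is compact, the supremum defining $\sigma_{F(x)}(\nu)$ is attained, and it suffices to test against a countable dense set of directions: because $C$ is bounded, $\nu \in N_C(u)$ if and only if $\nu \cdot (v-u) \le 0$ for all $v$ in a fixed countable dense subset of $C$. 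Using a Castaing representation of the measurable map $F$ — i.e., a countable family of measurable selections $\{g_k : X \to \mathbb R^m\}_{k \in \mathbb N}$ with $F(x) = \overline{\{g_k(x) : k \in \mathbb N\}}$ for every $x$ (which exists by \cite[Definition 6.2.1]{Anhand} and the associated characterization of measurability) — I would write
\begin{align*}
	N_{F(x)}(f(x)) = \bigcap_{k \in \mathbb N} \{\nu \in \mathbb R^m : \nu \cdot (g_k(x) - f(x)) \le 0\}.
\end{align*}

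The key steps, in order: (1) recall/quote the Castaing representation for the measurable, closed-valued map $F$; (2) verify the pointwise set identity displayed above, using compactness of $F(x)$ and density of $\{g_k(x)\}_k$ together with continuity of $\nu \mapsto \nu \cdot (v-f(x))$ in $v$; (3) for each fixed $k$, observe that the map $x \mapsto H_k(x) := \{\nu \in \mathbb R^m : \nu \cdot (g_k(x) - f(x)) \le 0\}$ is a measurable set-valued map, since $g_k - f$ is a measurable $\mathbb R^m$-valued function and $H_k(x)$ is the (closed, convex, nonempty — it contains $0$) half-space with normal $g_k(x) - f(x)$; measurability of $x \mapsto H_k(x)$ follows from the fact that $\{x : H_k(x) \cap O \ne \emptyset\}$ can be expressed through the measurable function $g_k - f$ for open $O \subset \mathbb R^m$ (equivalently, $H_k$ admits an obvious Castaing representation built from $g_k - f$ and a countable dense set of ``tangent'' directions), so $H_k$ is measurable by \cite[Definition 6.2.1]{Anhand}; and (4) conclude using the standard fact that a countable intersection of measurable closed-valued set-valued maps is measurable (see, e.g., the measurability theory in \cite[Chapter 6]{Anhand}), noting that the intersection here is nonempty-valued since $0 \in N_{F(x)}(f(x))$ for every $x$.

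The main obstacle I anticipate is step (3): carefully justifying that a set-valued map of the form $x \mapsto \{\nu : \nu \cdot h(x) \le 0\}$ with $h$ measurable is itself a measurable set-valued map in the sense used by the paper. This is essentially elementary — when $h(x) \ne 0$ the half-space can be parametrized continuously in $h(x)$ (e.g.\ via a measurable orthonormal frame or simply via $\{ \nu : \nu \cdot h(x) \le 0\} = \bigcup_{j} \{ t w_j : t \ge 0\} \cup \{\nu : \nu \cdot h(x) \le 0, |\nu| \le n\}$-type expansions), and the degenerate set $\{x : h(x) = 0\}$ (where the ``half-space'' is all of $\mathbb R^m$) is measurable and handled separately — but it requires matching the bookkeeping to whichever equivalent formulation of measurability (weak/strong measurability, or existence of a Castaing representation) is taken as the definition in \cite{Anhand}. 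A secondary minor point is step (2): one must be slightly careful that density of $\{g_k(x)\}_k$ in $F(x)$ genuinely suffices, which it does precisely because $F(x)$ is bounded so the linear functionals $v \mapsto \nu\cdot(v - f(x))$ are uniformly continuous on $F(x)$. Everything else is routine.
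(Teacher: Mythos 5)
Your proposal is correct and follows essentially the same route as the paper: both take a Castaing representation $\{g_k\}$ of $F$ and express $N_{F(x)}(f(x))$ as the countable intersection $\bigcap_k \{\nu : \nu\cdot(g_k(x)-f(x))\le 0\}$, with the paper delegating the measurability of this intersection of half-spaces to a single cited result rather than arguing it in steps as you do. No substantive difference.
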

\begin{proof}
	By \cite[Corollary 18.15]{Infdim}, there exists a countable family $\{f_{n}\}_{i\in\mathbb N}$ of measurable functions $f_n:X\to\mathbb R^m$ such that 
	\begin{align*}
		F(x)=\overline{\{f_{n}(x):n\in\mathbb N\}}\quad\text{for all $x\in X$.}
	\end{align*}
	Let $K:X\twoheadrightarrow\mathbb R^m$ be given by
	\begin{align*}
			K(x):=\left\lbrace \xi\in\mathbb R^m: \xi\cdot\big(f_{n}(x)-f(x)\big)\le0\quad\text{for all $n\in\mathbb N$}\right\rbrace.
	\end{align*}
The measurability of $K$ follows from \cite[Theorem 3]{Measconv}. Finally, observe that 
\begin{align*}
	\text{$K(x)=N_{F(x)}\big(f(x)\big)$ for all $x\in X$}. 
\end{align*}
\end{proof}

We now give a geometrical construction that will allow to extend \cref{lem:weak_gives_strong_nonneg} to the higher dimensional case. We proceed inductively, taking care of the measurability issues arising in the process.

\begin{lemma}\label{norconlem}
	Let $(X,\mathcal A)$ be a measurable space,  $F:X\twoheadrightarrow \mathbb R^m$ a measurable set-valued mapping taking nonempty compact convex values such that $0\in\ext{F(x)}$ for a.e. $x\in X$.  Then there exist measurable functions $e_1,\dots, e_m: X\to\mathbb R^m$ such that for a.e. $x\in X$,  $\{e_1(x)\dots,e_{m}(x)\}$ is an orthonormal basis of  $\mathbb R^m$ and
	\begin{align}\label{l1app}
	0\in e_{i}(x)+N_{F(x)\cap H_{i}(x)}(0)\quad\text{for all $i\in\{1,\dots, m\}$},
	\end{align}
		where $H_1(x):=\mathbb R^m$ and 
		\begin{align*}
			H_{i+1}(x):=\{v\in\mathbb R^m:\,e_{j}(x)\cdot v=0\quad\text{for all $j\in\{1,\dots,i\}$}\}\quad\text{for $i=1,\dots,m-1$}.
		\end{align*}
\end{lemma}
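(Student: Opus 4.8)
The plan is to construct the orthonormal basis $e_1,\dots,e_m$ one vector at a time, peeling off one dimension at each step while keeping everything measurable in $x$. The key geometric observation is this: if $0$ is an extreme point of a compact convex set $C\subset\mathbb R^k$ (with $k\geq 1$), then $C$ has a \emph{supporting hyperplane at $0$ that is nondegenerate}, meaning there is a unit vector $e$ with $e\cdot v\leq 0$ for all $v\in C$ and $e\cdot v<0$ for at least one $v\in C$ unless $C=\{0\}$; equivalently $-e\in N_C(0)$ with $e$ of unit norm, which is exactly the inclusion $0\in e + N_C(0)$ in \eqref{l1app}. (When $C=\{0\}$ one may pick $e$ arbitrarily among unit vectors in the ambient space.) So the construction is: set $H_1(x):=\mathbb R^m$; given $e_1(x),\dots,e_i(x)$ and the subspace $H_{i+1}(x)$ defined as in the statement, apply this observation to the compact convex set $F(x)\cap H_{i+1}(x)$ (which still has $0$ as an extreme point, since extremality is inherited by intersection with an affine subspace through the point) to obtain a unit vector $e_{i+1}(x)\in H_{i+1}(x)$ with $0\in e_{i+1}(x)+N_{F(x)\cap H_{i+1}(x)}(0)$. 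After $m$ steps the vectors $e_1,\dots,e_m$ are mutually orthogonal unit vectors, hence an orthonormal basis.

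The first thing I would pin down carefully is the measurable-selection argument that makes this inductive construction yield \emph{measurable} functions $e_i$, rather than just pointwise-defined ones. At step $i$, having the measurable $e_1,\dots,e_i$, the map $x\mapsto H_{i+1}(x)$ is measurable (its values are the orthogonal complements of spans of measurable vectors), and the intersection $x\mapsto F(x)\cap H_{i+1}(x)$ is a measurable compact-convex-valued map. I would then introduce the set-valued map
\begin{align*}
	G_{i+1}(x):=\bigl\{\, e\in H_{i+1}(x):\ |e|=1,\ -e\in N_{F(x)\cap H_{i+1}(x)}(0)\,\bigr\},
\end{align*}
argue it has nonempty closed values (nonemptiness is exactly the geometric observation above, with the $C=\{0\}$ case handled by replacing the normal-cone condition by $e\in H_{i+1}(x)$ when $F(x)\cap H_{i+1}(x)=\{0\}$), show it is a measurable set-valued map — here I would invoke \Cref{measissues} to get measurability of $x\mapsto N_{F(x)\cap H_{i+1}(x)}(0)$, intersect with the measurable unit sphere bundle in $H_{i+1}(x)$, and use standard closure properties of measurable correspondences — and finally apply the Kuratowski–Ryll-Nardzewski measurable selection theorem (e.g.\ \cite[Theorem 18.13]{Infdim}) to extract a measurable selection $e_{i+1}$ of $G_{i+1}$.

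I expect the main obstacle to be exactly this bookkeeping of measurability through the induction: verifying that $F(x)\cap H_{i+1}(x)$ is a measurable correspondence with the required structural properties, that the ``$C=\{0\}$'' degeneracy does not destroy measurability of $G_{i+1}$ (one can split $X$ into the measurable pieces $\{x:\dim(F(x)\cap H_{i+1}(x))=0\}$ and its complement and treat them separately), and that \Cref{measissues} is applicable with the intersected correspondence in place of $F$. The purely geometric content — that an extreme point admits a nondegenerate supporting hyperplane, and that cutting with a subspace through the point preserves extremality — is elementary and I would dispatch it quickly via separation of $0$ from $\operatorname{conv}$ of a neighborhood, or simply cite a standard convexity fact. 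The orthonormality and the identity $H_{i+1}(x)=\{v: e_j(x)\cdot v=0,\ j\le i\}$ are immediate from the construction.
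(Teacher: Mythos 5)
Your proposal is correct and follows essentially the same route as the paper's proof: an inductive construction of the $e_i$, measurability of $x\mapsmto H_{i+1}(x)$ and of $x\mapsmto F(x)\cap H_{i+1}(x)$, measurability of the normal-cone map via \cref{measissues}, and extraction of a unit-norm measurable selection from the resulting correspondence by the Kuratowski--Ryll-Nardzewski theorem. The degenerate case $F(x)\cap H_{i+1}(x)=\{0\}$ that you flag is handled automatically, since then $N_{F(x)\cap H_{i+1}(x)}(0)=\mathbb R^m$ and the selection set is the whole unit sphere of $H_{i+1}(x)$, so no case split is needed.
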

\begin{proof}
	We argue by induction.
	The case $i=1$ follows trivially from \cref{measissues}, and the Kuratowski–Ryll-Nardzewski Selection Theorem, see \cite[Theorem 18.13]{Infdim}.
	Now, let $i\in\{1,\dots,m-1\}$ and suppose that there exists a measurable function $e_{i}:X\to\mathbb R^m$ such that
	\begin{align*}
		-e_i(x)\in N_{F(x)\cap H_i(x)}(0)\cap H_{i}(x)\quad\text{and}\quad |e_i(x)|=1\quad\text{for a.e. $x\in X$}.
	\end{align*}
	By \cite[Corollary 3.6]{Measconv}, the set valued mapping $x \mapsmto H_{i+1}(x)$ is measurable, hence as $F$ is a compact-valued measurable mapping, it follows that  the mapping $x\mapsmto F(x)\cap H_{i+1}(x)$ is measurable, see \cite[Proposition 6.2.21]{Anhand}. As for a.e. $x\in X$,  $f(x)$ is an extreme point of $F(x)$, it follows that $0$ is a boundary point of $F(x)\cap H_{i+1}(x)$. By \cref{measissues}, the mapping $x\mapsmto N_{F(x)\cap H_{i+1}(x)}(0)$ is measurable. Define $K_{i+1}:X\twoheadrightarrow\mathbb R^m$ by 
	\begin{align*}
		K_{i+1}(x):=N_{F(x)\cap H_{i+1}(x)}(0)\cap H_{i+1}(x)\cap \{\nu\in\mathbb R^m:|\nu|=1\}
	\end{align*}
	By \cite[Proposition 6.2.21]{Anhand}, $K_{i+1}$ is measurable; moreover it is clear that it takes nonempty closed values. We can then use the  Kuratowski–Ryll-Nardzewski Selection Theorem to conclude the existence of a measurable selection  $e_{i+1}$ of $K_{i+1}$. This completes the induction step.
	
	Finally, we note that as $e_{i}(x)\in H_{i}(x)\cap\{\nu\in\mathbb R^m:|\nu|=1\}$ for a.e. $x\in X$ and all $i\in\{1,\dots, m\}$, the set $\{e_1(x),\dots,e_{m}(x)\}$ generates an orthonormal basis for a.e. $x\in X$.
\end{proof}

We now proceed to a merely technical lemma that allows to localize an argument given in the main theorem.

\begin{lemma}\label{preplemapp}
	Let $(X,\mathcal A,\mu )$ be a measure space,  $F:X\twoheadrightarrow \mathbb R^m$ a measurable set-valued mapping taking nonempty compact convex values such that $0\in\ext{F(x)}$ for a.e. $x\in X$. Let $\{f_{n}\}_{n\in\mathbb N}\subset {L^1(X)^m}$ be a sequence converging weakly to zero in ${L^1(X)^m}$.  Consider functions $e_1,\dots,e_m\in {L^\infty(X)^m}$ satisfying \cref{l1app}.
	 Let $i\in\{1,\dots,m-1\}$ and suppose that for each $j\in\{1,\dots,i\}$, $e_{j}(x)\cdot f_{n}(x)\to 0$ for a.e. $x\in X$.  Then $e_{i+1}\cdot f_n\to 0$ in ${L^1(X)^m}$.
\end{lemma}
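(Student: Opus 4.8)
The plan is to deduce the claim from the scalar result \cref{lem:weak_gives_strong_nonneg}, applied to the sequence $\phi_n:=e_{i+1}\cdot f_n$. First I would observe that $\phi_n\in L^1(X)$ since $e_{i+1}\in L^\infty(X)^m$ and $f_n\in L^1(X)^m$. Then there are exactly three things to check: that $\phi_n\rightharpoonup 0$ weakly in $L^1(X)$; that $\liminf_{n\to\infty}\phi_n(x)\ge 0$ for a.e.\ $x\in X$; and then to invoke \cref{lem:weak_gives_strong_nonneg} with limit function $\phi\equiv 0$.

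The weak convergence $\phi_n\rightharpoonup 0$ in $L^1(X)$ is immediate: for any $\varphi\in L^\infty(X)$ one has $\varphi\,e_{i+1}\in L^\infty(X)^m$, hence $\int_X\varphi\,\phi_n\,d\mu=\int_X(\varphi\,e_{i+1})\cdot f_n\,d\mu\to 0$, because $f_n\rightharpoonup 0$ weakly in $L^1(X)^m$.

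The heart of the argument is the pointwise lower bound $\liminf_{n\to\infty}\phi_n(x)\ge 0$, and this is where I expect the (minor) technical work; the key geometric input is \cref{l1app}, namely $-e_{i+1}(x)\in N_{F(x)\cap H_{i+1}(x)}(0)$, which says precisely that $e_{i+1}(x)\cdot v\ge 0$ for every $v\in F(x)\cap H_{i+1}(x)$, where $H_{i+1}(x)=\{v\in\mathbb R^m:e_j(x)\cdot v=0,\ j=1,\dots,i\}$. I would fix $x$ outside the countably many null sets where one of the following might fail: $f_n(x)\in F(x)$ for all $n$, the convergences $e_j(x)\cdot f_n(x)\to 0$ for $j=1,\dots,i$, and the inclusion \cref{l1app}. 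Putting $L:=\liminf_{n\to\infty}(e_{i+1}(x)\cdot f_n(x))$, choose a subsequence with $e_{i+1}(x)\cdot f_{n_k}(x)\to L$; since $F(x)$ is compact, a further subsequence gives $f_{n_{k_l}}(x)\to v^*\in F(x)$. Passing to the limit in $e_j(x)\cdot f_{n_{k_l}}(x)\to 0$ yields $e_j(x)\cdot v^*=0$ for $j=1,\dots,i$, so $v^*\in F(x)\cap H_{i+1}(x)$, and therefore $L=e_{i+1}(x)\cdot v^*\ge 0$. This proves $\liminf_{n\to\infty}\phi_n(x)\ge 0$ for a.e.\ $x$.

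Finally, \cref{lem:weak_gives_strong_nonneg} applied to $\{\phi_n\}$ with $\phi\equiv 0$ gives $|\phi_n|_{L^1(X)}\to 0$, i.e.\ $e_{i+1}\cdot f_n\to 0$ in $L^1(X)$, which is the assertion. The only genuine obstacle is the second step, and even there the difficulty is purely organizational: one must perform the compactness extraction \emph{pointwise} on a single fixed set of full measure on which $f_n(x)\in F(x)$, the convergences $e_j(x)\cdot f_n(x)\to 0$ for $j\le i$, and \cref{l1app} all hold simultaneously; no measurability of the auxiliary quantities is needed, since the limit-inferior of measurable functions is measurable.
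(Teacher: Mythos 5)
Your proof is correct and follows essentially the same route as the paper: extract, for a.e.\ fixed $x$, a subsequence of $\{f_n(x)\}$ realizing the limit inferior, use compactness of $F(x)$ to pass to a limit $v^*\in F(x)\cap H_{i+1}(x)$, invoke \cref{l1app} to get $e_{i+1}(x)\cdot v^*\ge 0$, and conclude via \cref{lem:weak_gives_strong_nonneg}. Note that both arguments tacitly use $f_n(x)\in F(x)$ a.e., which is not stated in the lemma itself but holds where it is applied; your explicit verification of the weak convergence $e_{i+1}\cdot f_n\rightharpoonup 0$ in $L^1(X)$ is a small point the paper leaves implicit.
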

\begin{proof}
	Let $N\in\mathcal A$ be a measure zero set such that 
			\begin{align}\label{lemapp}
			e_{j}(x)\cdot f_n(x)\to 0\quad\text{for all $x\in X\setminus N$}
		\end{align}
	holds for each $j\in\{1,\dots,i\}$ and such that
	\eqref{l1app} is valid for all $x \in X \setminus N$. 
	
	Let $x\in X\setminus N$. Consider a subsequence $\{f_{n_k}(x)\}_{k\in\mathbb N}$ of $\{f_n(x)\}_{n\in\mathbb N}$ such that 
	\begin{align*}
			\liminf_{n\to\infty}e_{i+1}(x)\cdot f_{n}(x)=\lim_{k\to\infty} e_{i+1}(x)\cdot f_{n_k}(x).
	\end{align*}
	Since $F(x)$ is compact, we can find a subsequence $\{f_{n_{k_{l}}}(x)\}_{l\in\mathbb N}$ of $\{f_{n_k}(x)\}_{k\in\mathbb N}$ converging to some $f_x\in F(x)$. Then, by (\ref{lemapp}), we get $e_{j}(x)\cdot f_x=0$ for $j\in\{1,\dots, i\}$ and thus $f_x\in H_{i+1}(x)$. Since $-e_{i+1}(x)\in N_{F(x)\cap H_{i+1}(x)}(0)$, we get $e_{i+1}(x)\cdot f_x\ge 0$. Then, 
	\begin{align*}
		\liminf_{n\to\infty}e_{i+1}(x)\cdot f_{n}(x)=\lim_{l\to\infty} e_{i+1}(x)\cdot f_{n_{k_l}}(x)=e_{i+1}(x)\cdot f_x\ge 0.
	\end{align*}
	Since $x\in X\setminus N$ was arbitrary, we conclude  $\liminf_{n\to\infty}e_{i+1}\cdot f_{n}\ge0$ a.e. $x\in X$.
	It follows then from Lemma \ref{lem:weak_gives_strong_nonneg} that $|e_{i+1}\cdot f_n|_{{L^1(X)^m}}\to 0$.
\end{proof}

We are now ready to prove the main result of this subsection of the Appendix. 

\begin{theorem}\label{Visintinthm}
	Let $(X,\mathcal A,\mu)$ be a measure space, $F:X\twoheadrightarrow\mathbb R^m$  a measurable set-valued mapping taking nonempty compact convex values, and $f\in {L^1(X)^m}$ such that $f(x)\in\ext F(x)$ for a.e. $x\in X$. Let  $\{f_n\}_{n\in\mathbb N}\subset {L^1(X)^m}$ be a sequence of functions such that $f_n(x)\in F(x)$ for a.e. $x\in X$. If $f_n\rightharpoonup f$  weakly in ${L^1(X)^m}$, then $f_n\to f$ in ${L^1(X)^m}$.
\end{theorem}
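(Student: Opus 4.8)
The plan is to reduce to the centered situation and then strip off, one at a time, the components of a measurable orthonormal frame adapted to $F$, invoking the scalar statement \cref{lem:weak_gives_strong_nonneg} at each step. First I would normalize: setting $v_n := f_n - f$ and $\tilde F(x) := F(x) - f(x)$, the multifunction $\tilde F$ is again measurable with nonempty compact convex values (a Castaing representation of $F$ in the sense of \cite[Corollary 18.15]{Infdim}, translated by the measurable map $f$, represents $\tilde F$), one has $0 = f(x) - f(x) \in \ext \tilde F(x)$ for a.e.\ $x$, $v_n(x) \in \tilde F(x)$ a.e., and $v_n \rightharpoonup 0$ weakly in ${L^1(X)^m}$. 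Hence it suffices to treat the case $f \equiv 0$ with $0 \in \ext F(x)$ for a.e.\ $x$, which I do from now on, writing $f_n$ for $v_n$.

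Next I would apply \cref{norconlem} to obtain measurable $e_1,\dots,e_m : X \to \mathbb R^m$ which form an orthonormal basis of $\mathbb R^m$ for a.e.\ $x$ and satisfy \eqref{l1app}. Since $|e_i(x)| = 1$ a.e., each $e_i \in {L^\infty(X)^m}$, and for $\phi \in L^\infty(X)$ one has $\int_X \phi\,(e_i\cdot f_n)\,d\mu = \int_X (\phi e_i)\cdot f_n\,d\mu \to 0$, so $e_i\cdot f_n \rightharpoonup 0$ weakly in $L^1(X)$ for each $i$. For the first coordinate, \eqref{l1app} with $i=1$ (where $H_1(x) = \mathbb R^m$) reads $-e_1(x) \in N_{F(x)}(0)$, i.e.\ $e_1(x)\cdot w \ge 0$ for all $w \in F(x)$; since $f_n(x) \in F(x)$ a.e., this gives $e_1(x)\cdot f_n(x) \ge 0$ a.e., hence $\liminf_{n\to\infty}(e_1\cdot f_n)(x) \ge 0$ a.e., and \cref{lem:weak_gives_strong_nonneg} yields $e_1\cdot f_n \to 0$ in $L^1(X)$.

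To handle the remaining coordinates I would argue through subsequences: it suffices to show that every subsequence of $\{f_n\}_{n\in\mathbb N}$ admits a further subsequence converging to $0$ in ${L^1(X)^m}$. Fixing such a subsequence and relabelling it $\{f_n\}_{n\in\mathbb N}$ (it still converges weakly to $0$ and still takes values in $F$), I first pass to a subsequence along which $e_1\cdot f_n \to 0$ a.e.; then, assuming that for some $i \le m-1$ a subsequence has been selected along which $e_j\cdot f_n \to 0$ a.e.\ for all $j \le i$, the hypotheses of \cref{preplemapp} are met, so $e_{i+1}\cdot f_n \to 0$ in $L^1(X)$, and I pass to a further subsequence along which also $e_{i+1}\cdot f_n \to 0$ a.e. After $m-1$ iterations I obtain a subsequence along which $e_i\cdot f_n \to 0$ in $L^1(X)$ for every $i\in\{1,\dots,m\}$. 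Expanding $f_n(x)$ in the orthonormal basis $\{e_i(x)\}_{i=1}^m$ gives $|f_n(x)| \le \sum_{i=1}^m |e_i(x)\cdot f_n(x)|$ for a.e.\ $x$, whence $|f_n|_{{L^1(X)^m}} \le \sum_{i=1}^m |e_i\cdot f_n|_{L^1(X)} \to 0$ along that subsequence; since the initial subsequence was arbitrary, $f_n \to 0$ in ${L^1(X)^m}$, that is, $f_n \to f$ in the original notation.

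The main obstacle is the mismatch between the input and the output of \cref{preplemapp}: it upgrades \emph{a.e.}\ convergence of the first $i$ coordinates to \emph{$L^1$} convergence of the $(i{+}1)$-st, whereas the induction naturally produces only $L^1$ convergence. Running the induction inside the ``every subsequence has a good sub-subsequence'' device, and extracting an a.e.-convergent subsequence after each of the finitely many coordinates has been treated, is precisely what bridges this gap. The remaining points — the measurability of the translated multifunction $\tilde F$, the weak convergence in $L^1(X)$ of the scalar sequences $e_i\cdot f_n$, and the elementary orthonormal-basis inequality — are routine.
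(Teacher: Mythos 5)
Your proposal is correct and follows essentially the same route as the paper's proof: reduce to $f\equiv 0$, take the measurable orthonormal frame from \cref{norconlem}, strip off coordinates one at a time via \cref{lem:weak_gives_strong_nonneg} and \cref{preplemapp}, and conclude with the orthonormal-basis inequality. The only difference is organizational — you run a single outer subsequence extraction on $\{f_n\}$ with successive a.e.-convergent refinements, whereas the paper proves $|e_i\cdot f_n|_{L^1(X)}\to 0$ for the full sequence at each induction step by nesting the subsequence argument inside that step — and both correctly bridge the a.e.-input/$L^1$-output mismatch of \cref{preplemapp}.
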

\begin{proof}
	Assume without loss of generality that $f=0$.
	Let $e_1,\dots, e_m:X\to\mathbb R^m$ be the measurable functions given in \cref{norconlem}.  We argue by induction that $|e_{i}\cdot f_n|_{{L^1(X)}}\to 0$ for all $i\in\{1,\dots,m\}$. The case $i=1$ follows trivially from Lemma \ref{lem:weak_gives_strong_nonneg}.
	Let $i\in\{1,\dots,m-1\}$ and suppose that 
	\begin{align*}
		|e_{j}\cdot f_n|_{{L^1(X)^m}}\to0\quad\text{for all $j\in\{1,\dots, i\}$}.
	\end{align*}
	Let $\{e_{i+1}\cdot f_{n_k}\}_{k\in\mathbb N}$ be any subsequence of $\{e_{i+1}\cdot f_n\}_{n\in\mathbb N}$. We can find a subsequence $\{f_{n_{k_l}}\}_{l\in\mathbb N}$ of $\{f_{n_k}\}_{k\in\mathbb N}$ such that  for each  $j\in\{1,\dots, i\}$,
	\begin{align*}
		e_{j}(x)\cdot f_{n_{k_l}}(x)\to 0\quad\text{for a.e. $x\in X$}.
	\end{align*}
	It follows by \cref{preplemapp} that $\{e_{i+1}\cdot f_{n_{k_l}}\}_{l\in\mathbb N}$ converges to 0 in ${L^1(X)}$.  Since every subsequence of $\{e_{i+1}\cdot f_n\}_{n\in\mathbb N}$ has further a subsequence converging to $0$ in ${L^1(X)}$, it follows that the entire sequence converges to $0$ in ${L^1(X)}$. This completes the induction step.
	
	Finally, since $\{e_1(x),\dots, e_{m}(x)\}$ is an orthonormal basis of $\mathbb R^m$ for a.e. $x\in X$, 
	\begin{align*}
		\int_{X}|f_n(x)|\, d\mu(x)\le \sum_{i=1}^m\int_{X}|e_{i}(x)\cdot f_{n}(x)|\,d\mu(x)\longrightarrow 0.
	\end{align*}
\end{proof}

	\subsection{The weak clustering principle}

Given $p\in[1,\infty]$ and a set-valued mapping $G:X\twoheadrightarrow\mathbb R^m$ from a measure space to the Euclidean space, we denote 
\begin{align*}
	\mathcal S^p_{G}: = \{g \in L^p(X)^m : g(x) \in G(x)\text{ for a.e.\ } x \in X\}.
\end{align*}

\begin{lemma}\label{extpointlem}
	Let $(X,\mathcal A,\mu)$ be a $\sigma$-finite measure space, $F:X\twoheadrightarrow\mathbb R^m$  a measurable set-valued mapping taking nonempty compact convex values, $p\in[1,\infty]$, and $f\in L^p(X)^m$ such that $f(x)\in F(x)$ for a.e.\ $x\in X$. Suppose there exists a set $E\subset X$ of positive measure such that  
	\begin{align*}
		f(x)\notin \ext F(x)\quad \text{for a.e. $x\in E$.}
	\end{align*}
	Then there exist two distinct functions $\alpha,\beta\in \mathcal S_{F}^p$ such that $f=\displaystyle\frac{\alpha+\beta}{2}$ a.e. in $X$.
\end{lemma}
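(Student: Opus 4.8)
The plan is to realize the splitting by working, pointwise in $x$, with the set of points of $F(x)$ that are symmetric to themselves about $f(x)$, and then to make a measurable selection of a point of that set which is genuinely different from $f(x)$ on a suitable subset of $E$. The pointwise geometry is trivial — a non-extreme point is a nontrivial midpoint — so the whole difficulty lies in the measurable-selection bookkeeping and in the $L^p$-integrability of the two halves.

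First I would introduce the set-valued mapping
\[
	G(x) := F(x) \cap \big( 2 f(x) - F(x) \big) = \{\, v \in \mathbb R^m : v \in F(x) \text{ and } 2 f(x) - v \in F(x) \,\}.
\]
It always contains $f(x)$, has nonempty compact convex values, and is measurable: the reflected mapping $x \mapsmto 2 f(x) - F(x)$ is measurable because $f$ is measurable and $F$ is measurable and compact-valued, and the intersection of a measurable compact-valued mapping with a measurable closed-valued one is measurable, cf.\ \cite[Proposition 6.2.21]{Anhand}. The point of $G$ is that any $v \in G(x)$ with $v \ne f(x)$ yields the local splitting $\tfrac12\big(v + (2f(x)-v)\big) = f(x)$ with both summands in $F(x)$. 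Moreover $G(x) \ne \{f(x)\}$ for a.e.\ $x \in E$: writing $f(x) = \tfrac12(a+b)$ with $a \ne b$ in $F(x)$ gives $a \in G(x) \setminus \{f(x)\}$. Using a Castaing representation of $G$ (cf.\ \cite[Corollary 18.15]{Infdim}), the function $d(x) := \sup_{v \in G(x)} |v - f(x)|$ is measurable and strictly positive a.e.\ on $E$.

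Next I would localize. Since $d > 0$ a.e.\ on $E$, the set $E$ is the countable union of the sets $\{x \in E : d(x) > 1/k\}$, so one of them has positive measure; intersecting it with a member of an exhausting sequence of finite-measure sets (this is where $\sigma$-finiteness enters) produces a measurable set $E_0 \subset E$ with $0 < \mu(E_0) < \infty$ and a constant $c > 0$ such that $d(x) \ge c$ for all $x \in E_0$. On $E_0$, convexity of $G(x)$ together with $f(x) \in G(x)$ and the presence of a point of $G(x)$ at distance at least $c$ from $f(x)$ forces $H(x) := G(x) \cap \{\, v : |v - f(x)| = c \,\}$ to be nonempty; it is also compact and, as the intersection of the measurable compact-valued mapping $G$ with the measurable closed-valued sphere mapping $x \mapsmto \{v : |v-f(x)| = c\}$, measurable. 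The Kuratowski--Ryll-Nardzewski selection theorem (\cite[Theorem 18.13]{Infdim}) then yields a measurable $v_0 : E_0 \to \mathbb R^m$ with $v_0(x) \in H(x)$ for all $x \in E_0$.

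Finally I would assemble the functions: set $\alpha := v_0$ and $\beta := 2f - v_0$ on $E_0$, and $\alpha := \beta := f$ on $X \setminus E_0$. Then $\alpha, \beta$ are measurable, satisfy $\alpha(x), \beta(x) \in F(x)$ a.e.\ and $f = \tfrac12(\alpha+\beta)$ a.e.; on $E_0$ one has $|\alpha - f| = |\beta - f| = c$, so $|\alpha - f|$ and $|\beta - f|$ are bounded by $c$ and supported in the finite-measure set $E_0$, which together with $f \in L^p(X)^m$ gives $\alpha, \beta \in \mathcal S^p_F$ (the finiteness of $\mu(E_0)$ being needed only for $p < \infty$); and $\alpha \ne \beta$ because $|\alpha - \beta| = 2c > 0$ on the positive-measure set $E_0$. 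I expect the main obstacle to be precisely the verification that $G$, $d$ and $H$ are measurable; the device of fixing the distance $|v - f(x)| = c$ to a constant is what simultaneously keeps the selection set closed-valued and keeps the resulting halves in $L^p$.
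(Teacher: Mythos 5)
Your proof is correct, but it takes a genuinely different route from the paper. The paper's proof is a three-line abstract argument: it invokes the characterization $\ext \mathcal S_F^p = \mathcal S_{\ext F}^p$ (cited as \cite[Theorem 6.4.26]{Anhand}), observes that the hypothesis means $f \notin \mathcal S_{\ext F}^p$, and concludes that $f$ is not an extreme point of the convex set $\mathcal S_F^p$, which is exactly the claimed midpoint decomposition. You instead build the decomposition by hand: the reflected intersection $G(x) = F(x) \cap \bigl(2f(x) - F(x)\bigr)$ is precisely the right object to encode ``midpoints through $f(x)$'' pointwise, your argument that $G(x) \neq \{f(x)\}$ a.e.\ on $E$ is sound, and the localization to a finite-measure set $E_0$ on which $d \ge c$, followed by a Kuratowski--Ryll-Nardzewski selection from $G(x)$ intersected with the sphere of radius $c$ about $f(x)$, correctly handles both measurability and integrability. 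What your approach buys is self-containedness (only standard selection theorems rather than the nontrivial identification of $\ext \mathcal S_F^p$) and a slightly stronger output: your $\alpha, \beta$ differ from $f$ only on a finite-measure set and by the constant amount $c$ there, which makes membership in $\mathcal S_F^p$ for every $p$, including $p = \infty$, immediate, and makes the role of $\sigma$-finiteness explicit. What the paper's approach buys is brevity and the fact that the same cited machinery ($\mathcal S_{\ext F}^p$ versus $\mathcal S_F^p$) is reused in the subsequent \cref{exissequ}, so the appendix stays uniform in its toolset.
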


\begin{proof}
Observe that $f\in \mathcal S^p_F$, and consequently $\mathcal S^p_{F}$ is nonempty. As stated in \cite[Theorem 6.4.26]{Anhand}, from the measurability of $F$, we can deduce that $\ext\mathcal S^p_F=\mathcal S_{\ext F}^p$. Hence, as $f\not\in \mathcal S_{\ext F}^p$, $f$ is not an extreme point of $\mathcal S_F^p$; thus there must exist two distinct functions $\alpha,\beta\in \mathcal S_{F}^p$ such that $f=2^{-1}(\alpha+\beta)$ a.e. $x\in X$.
\end{proof}

\begin{lemma}\label{exissequ}
	Let $(X,\mathcal A,\mu)$ be a non-atomic $\sigma$-finite measure space and $F:X\twoheadrightarrow\mathbb R^m$  a measurable set-valued mapping taking nonempty compact convex values. Let $p\in[1,\infty)$ and consider $f\in L^p(X)^m$ such that $f(x)\in F(x)$ for a.e. $x\in X$. There exists a sequence $\{f_n\}_{n\in\mathbb N}\subset L^p(X)^m$ with the following properties.
	\begin{enumerate}
		\item $f_n(x)\in \ext F(x)$ for a.e $x\in X$ and all $n\in\mathbb N$;
		
		\item $f_n\rightharpoonup f$ weakly in $L^p(X)^m$.
	\end{enumerate}
\end{lemma}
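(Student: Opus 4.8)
The plan is a \emph{chattering} (relaxation) construction: represent $f$ pointwise as a convex combination of extreme points of $F$, and then oscillate rapidly between these extreme points on finitely many test functions at a time, using a Lyapunov-type argument to keep the relevant averages \emph{exactly} equal to those of $f$. The first step I would take is a reduction to a countably generated setting, which is what allows one to extract a genuine sequence rather than a net. Since countably many measurable functions generate a countably generated $\sigma$-algebra and $F$ admits a Castaing representation by countably many measurable selections (\cite[Corollary 18.15]{Infdim}), there is a countably generated $\mathcal A_0\subset\mathcal A$ making $f$ and $F$ measurable. Adjoining to $\mathcal A_0$ the preimages of the Borel sets under a measurable map $\tau\colon X\to(0,1)$ that pushes $\mu$, piece by piece along a $\sigma$-finite exhaustion, onto multiples of Lebesgue measure — such $\tau$ exists precisely because $\mu$ is non-atomic and $\sigma$-finite — yields a countably generated $\mathcal A_1\subset\mathcal A$ that is still non-atomic and with respect to which $f$ and $F$ remain measurable. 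On $(X,\mathcal A_1,\mu)$ the space $L^1$ is separable, so there is a countable family $\{h_k\}_{k\in\mathbb N}\subset L^{p'}(X)^m$ that is norm dense if $p>1$ and weak-$*$ dense in the unit ball of $L^\infty(X)^m$ if $p=1$. Because every $f_n$ built below is $\mathcal A_1$-measurable, testing an arbitrary $h\in L^{p'}(X,\mathcal A,\mu)^m$ against $f_n$ equals testing its conditional expectation $\mathbb E[h\mid\mathcal A_1]$; hence weak convergence in $L^p(X,\mathcal A_1,\mu)^m$ will imply weak convergence in $L^p(X,\mathcal A,\mu)^m$, and one may work on $(X,\mathcal A_1,\mu)$.

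On this space, by Carathéodory's theorem one can write $f(x)=\sum_{i=0}^{m}\lambda_i(x)\,v_i(x)$ for a.e.\ $x$, with $\lambda_i(x)\ge0$, $\sum_i\lambda_i(x)=1$ and $v_i(x)\in\ext F(x)$; using the measurability of $x\mapsto\ext F(x)$ and the Kuratowski--Ryll-Nardzewski selection theorem (\cite[Theorem 18.13]{Infdim}) applied to the (measurable, closed-valued, nonempty) multifunction of admissible tuples, the $\lambda_i$ and $v_i$ can be chosen measurable and $p$-integrable. For each $n$, the Lyapunov convexity theorem, applied to the non-atomic $\mathbb R^{(m+1)n}$-valued measure $A\mapsto\big(\int_A (h_k\cdot v_i)\,d\mu\big)_{0\le i\le m,\,1\le k\le n}$, produces a measurable partition $X=E_0^n\cup\dots\cup E_m^n$ with $\int_{E_i^n}(h_k\cdot v_i)\,d\mu=\int_X\lambda_i\,(h_k\cdot v_i)\,d\mu$ for all $i\le m$ and $k\le n$. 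Put $f_n:=\sum_{i=0}^{m}\mathbb 1_{E_i^n}\,v_i$. Then $f_n(x)\in\{v_0(x),\dots,v_m(x)\}\subset\ext F(x)$ a.e., and $f_n$ is dominated by $\max_i|v_i|\in L^p(X)$, so $f_n\in L^p(X)^m$ and (for $p=1$) the sequence is equi-integrable. For every $k$ and every $n\ge k$ one computes $\int_X h_k\cdot f_n\,d\mu=\sum_i\int_{E_i^n}(h_k\cdot v_i)\,d\mu=\sum_i\int_X\lambda_i(h_k\cdot v_i)\,d\mu=\int_X h_k\cdot f\,d\mu$. Since $\{f_n\}$ is bounded in $L^p$ (equi-integrable when $p=1$) and converges when tested against each element of the dense family $\{h_k\}$, every weak cluster point equals $f$, hence $f_n\rightharpoonup f$ in $L^p(X,\mathcal A_1,\mu)^m$, and therefore in $L^p(X,\mathcal A,\mu)^m$ by the reduction step; this gives (i)--(ii).

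The routine part is the oscillation once a measurable Carathéodory representation is available. The delicate points are the two measure-theoretic preliminaries: producing a measurable, $p$-integrable selection of extreme points realising the convex combination $f=\sum_i\lambda_i v_i$, and — more importantly, since the underlying measure space need not be separable — replacing $\mathcal A$ by a countably generated non-atomic sub-$\sigma$-algebra so that an honest \emph{sequence} rather than a net is obtained, the conditional-expectation identity being what makes this reduction lossless. (One should also note that the $p$-integrability of the $v_i$ implicitly uses that $F$ is $L^p$-integrably bounded, which holds in all situations where the lemma is invoked.)
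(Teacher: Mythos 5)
Your construction is genuinely different from the paper's proof, which is much softer: the paper simply notes $f\in\mathcal S_F^p$, cites the density theorem $\overline{\mathcal S_{\ext F}^p}^{\,w}=\mathcal S^p_{F}$ for non-atomic measure spaces (\cite[Proposition 6.4.27]{Anhand}), and then upgrades the net coming from the weak closure to an honest sequence via the Eberlein--\v Smulian theorem combined with Day's lemma. Your reduction to a countably generated non-atomic sub-$\sigma$-algebra plays exactly the role that Day's lemma plays there, and your Carath\'eodory-plus-Lyapunov (Dvoretzky--Wald--Wolfowitz) chattering is in essence a self-contained proof of the cited density theorem. What the paper's route buys is brevity; what yours buys is transparency and independence from the Handbook, at the price of the measurable-selection technicalities below.

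Two of those technicalities are genuine soft spots. First, you invoke the Kuratowski--Ryll-Nardzewski theorem for the multifunction of admissible tuples $(\lambda_i,v_i)$ and call it closed-valued; it is not, because $\ext F(x)$ need not be closed for $m\ge 3$ --- ironically the very error this paper's appendix criticizes in \cite{Visin}. The desired measurable extremal representation still exists, but you must argue via graph measurability (using that $\ext F(x)$ is a $G_\delta$ subset of $F(x)$ and that $x\mapsmto\ext F(x)$ has measurable graph) and the Yankov--von Neumann--Aumann selection theorem on the completed measure space, not via KRN. Second, the domination $|f_n|\le\max_i|v_i|$ only places $f_n$ in $L^p(X)^m$ (and gives equi-integrability and tightness when $p=1$) if the extremal representatives $v_i$ are themselves $p$-integrable, which does not follow from $f\in L^p(X)^m$; you flag this yourself. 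This is in fact a gap in the statement rather than only in your argument: for $F(x)=[-1/x,1/x]$ on $(0,1)$ with $f=0$ and $p=1$, every selection of $\ext F$ has modulus $1/x\notin L^1(0,1)$, so no sequence satisfying (i) exists at all. Some integrable-boundedness hypothesis is therefore needed; it is hidden inside the paper's citation of \cite[Proposition 6.4.27]{Anhand} (and inside its appeal to Eberlein--\v Smulian, which also presupposes relative weak compactness of $\mathcal S^p_{\ext F}$), and it holds in every application made in the paper, e.g.\ for $G(x)=[\alpha(x),\beta(x)]$ in \cref{Clustering}. With those two repairs your argument is sound.
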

\begin{proof} 
	Observe that $f\in \mathcal S_F^p$, and consequently $\mathcal S_{F}^p$ is nonempty. According to \cite[Proposition 6.4.27]{Anhand}, we have $\overline{\mathcal S_{\ext F}^p}^{w}=\mathcal S^p_{F}$, where $\overline{\mathcal S_{\ext F}^p}^{w}$ denotes the closure of $\mathcal S_{\ext F}^p$ with respect to the weak topology of ${L^p(X)^m}$.  By the theorem of Eberlein--\v{S}mulian (in the form of \cite[Theorem 2.8.6]{Megginson}), every sequence in $\mathcal S^p_{\ext F}$ has a weak limit point.  We can then employ Day's Lemma (\cite[Lemma 2.8.5]{Megginson}) to find a sequence $\{f_n\}_{n\in\mathbb N}\subset\mathcal S_{\text{ext F}}^p \subset {L^p(X)^m}$ such that
	$f_n  \rightharpoonup f$ in $L^p(X)^m$.
\end{proof}

We are now ready to prove the main result of this subsection.
\begin{theorem}\label{Clustering}
	Let $(X,\mathcal A,\mu)$ be a  non-atomic $\sigma$-finite measure space, $F:X\twoheadrightarrow\mathbb R^m$  a measurable set-valued mapping taking non-empty compact convex values, and $f\in {L^1(X)^m}$ such that $f(x)\in F(x)$ for a.e. $x\in X$. Suppose that there exists a set $E$ of positive measure such that 
	\begin{align*}
		f(x)\notin \ext F(x)\quad\text{for a.e. $x\in E$}.
	\end{align*}
	Then there exists $\delta_0>0$ such that for every $\delta\in(0,\delta_0]$ there exists a sequence $\{f_n\}_{n\in\mathbb N}\subset {L^1(X)^m}$ with the following properties. 
	\begin{enumerate}
		\item $f_n(x)\in F(x)$ for a.e. $x\in X$ and all $n\in\mathbb N$;
		
		\item $|f_{n}-f|_{{L^1(X)^m}}=\delta$ for all $n\in\mathbb N$; 
		
		\item $f_n\rightharpoonup f$ weakly in ${L^1(X)^m}$.
	\end{enumerate}
\end{theorem}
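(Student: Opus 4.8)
The plan is to reduce the whole construction to the two preceding lemmas. First I would apply \cref{extpointlem} with $p=1$: since $\mu(E)>0$ and $f(x)\notin\ext F(x)$ for a.e.\ $x\in E$, there are two distinct functions $\alpha,\beta\in\mathcal S_{F}^{1}$ with $f=\tfrac12(\alpha+\beta)$ a.e.\ in $X$. As $\alpha\neq\beta$ as elements of ${L^1(X)^m}$, the function $g:=\beta-\alpha\in{L^1(X)^m}$ satisfies $|g|_{{L^1(X)^m}}>0$, and I would take $\delta_0:=\tfrac12|g|_{{L^1(X)^m}}$; this is the positive constant asserted in the statement. The hypothesis on $E$ enters exactly here, to guarantee $\delta_0>0$.

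Next I would introduce the auxiliary set-valued mapping $G:X\twoheadrightarrow\mathbb R^m$ given by $G(x):=\conv\{\alpha(x),\beta(x)\}$ (a segment, degenerating to a point where $\alpha(x)=\beta(x)$). One checks that $G$ has nonempty compact convex values, that it is measurable — e.g.\ through the Castaing representation $G(x)=\overline{\{(1-q)\alpha(x)+q\beta(x):q\in\mathbb Q\cap[0,1]\}}$, a closure of a countable family of measurable selections — and that $f(x)=\tfrac12\alpha(x)+\tfrac12\beta(x)\in G(x)$ for a.e.\ $x\in X$. The key point is the description of the fibrewise extreme points: for a.e.\ $x$ one has $\ext G(x)\subseteq\{\alpha(x),\beta(x)\}$, so every $v\in\ext G(x)$ satisfies $|v-f(x)|=\tfrac12|\alpha(x)-\beta(x)|=\tfrac12|g(x)|$ (this identity is trivially true, with both sides zero, when $\alpha(x)=\beta(x)$).

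Now I would apply \cref{exissequ} to the mapping $G$ and the selection $f$, using the same non-atomic $\sigma$-finite measure space and $p=1$. This yields a sequence $\{h_n\}_{n\in\mathbb N}\subset{L^1(X)^m}$ with $h_n(x)\in\ext G(x)\subseteq F(x)$ for a.e.\ $x\in X$ and $h_n\rightharpoonup f$ weakly in ${L^1(X)^m}$. By the pointwise identity recorded above, $|h_n-f|_{{L^1(X)^m}}=\tfrac12\int_X|g|\,d\mu=\delta_0$ for every $n$. Finally, given $\delta\in(0,\delta_0]$, put $\theta:=\delta/\delta_0\in(0,1]$ and $f_n:=\theta h_n+(1-\theta)f$. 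Convexity of each $F(x)$ gives $f_n(x)=\theta h_n(x)+(1-\theta)f(x)\in F(x)$ a.e., which is item (i); from $|f_n-f|_{{L^1(X)^m}}=\theta|h_n-f|_{{L^1(X)^m}}=\theta\delta_0=\delta$ we get item (ii); and $f_n-f=\theta(h_n-f)\rightharpoonup 0$ gives item (iii).

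I expect the main obstacle to be the two structural facts about the auxiliary map $G$: that $x\mapsto\conv\{\alpha(x),\beta(x)\}$ is a measurable set-valued mapping, and that its extreme-point fibres are contained in $\{\alpha(x),\beta(x)\}$ (so the $L^1$-distance from $h_n$ to $f$ can be computed pointwise and is constant in $n$); once these are in place, the application of \cref{exissequ} and the final rescaling by $\theta$ are routine. It is worth remarking that the use of \cref{exissequ} through the segment-valued $G$, rather than a direct application to $F$ and $f$, is essential: it is precisely the passage to $\{\alpha,\beta\}$ that forces the perturbation to have the prescribed $L^1$-size, consistent with the fact — see \cref{Visintinthm} — that no such weakly null perturbation of fixed positive size can exist when $f$ is bang-bang.
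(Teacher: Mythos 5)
Your proposal is correct and follows essentially the same route as the paper's proof: decompose $f=\tfrac12(\alpha+\beta)$ via \cref{extpointlem}, form the segment-valued map $G(x)=[\alpha(x),\beta(x)]$, apply \cref{exissequ} to obtain a weakly convergent sequence of extreme-point selections at constant $L^1$-distance $\delta_0=\tfrac12|\beta-\alpha|_{L^1(X)^m}$ from $f$, and rescale by $\delta/\delta_0$. Your explicit treatment of the measurability of $G$ and of $\ext G(x)\subseteq\{\alpha(x),\beta(x)\}$ merely fills in details the paper leaves implicit.
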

\begin{proof}
	By \cref{extpointlem}, there exist two distinct functions $\alpha,\beta\in {L^1(X)^m}$ such that $f(x)=2^{-1}\big(\alpha(x)+\beta(x)\big)$ for  a.e. $x\in X$. 
	Let $G:X\twoheadrightarrow\mathbb R^m$ be given by $G(x):=[\alpha(x),\beta(x)]$.
	By Lemma \ref{exissequ}, there exists a sequence $\{g_n\}_{n\in\mathbb N}\subset {L^1(X)}$ converging weakly in ${L^1(X)^m}$ to $f$ such that 
	\begin{align*}
		g_{n}(x)\in\{\alpha(x),\beta(x)\}\quad\text{for a.e $x\in X$ and all $n\in\mathbb N$}.
	\end{align*}
	Define $\delta_0:=2^{-1}|\beta-\alpha|_{{L^1(X)^m}}$. For each  $\delta\in(0,\delta_0]$ consider the sequence $\{f_n\}_{n\in\mathbb N}$ given by
	\begin{align*}
		f_n:=f+\frac{2\delta}{\hspace*{0.3cm}|\beta-\alpha|_{{L^1(X)^m}}}\big(g_n-f\big)\quad\forall n\in\mathbb N.
	\end{align*}
	Then, by construction, $f_n\rightharpoonup f$ and $|f_n-f|_{{L^1(X)^m}}=\delta$ for all $n\in\mathbb N$. The sequence $\{f_n\}_{n\in\mathbb N}$ satisfies all the stated properties and the proof culminates.
\end{proof}

Observe that the  non-atomicity assumption in the previous proposition is needed as spaces like $l^1(\mathbb N)$ have the \text{Schur's property}. Not to mention the $L^1$-spaces induced by the counting measure over a finite subset of the natural numbers, yielding finite dimensional spaces.

Finally,
we note that \cref{Clustering}
yields the result of \cite[Proposition~1]{Visin}
under a set of slightly different assumptions.


\bibliographystyle{siamplain}
\bibliography{references}
\end{document}